\g@addto@macro\normalsize{%
  \setlength\abovedisplayskip{7pt}
  \setlength\belowdisplayskip{7pt}
  \setlength\abovedisplayshortskip{7pt}
  \setlength\belowdisplayshortskip{7pt}
}
\titlespacing*{\section}{0pt}{3.5ex plus 0ex minus 0ex}{1.5ex plus 0ex}
\titlespacing*{\subsection}{0pt}{3.5ex plus 0ex minus 0ex}{1.5ex plus 0ex}
\titlespacing*{\subsubsection}{0pt}{3.5ex plus 0ex minus 0ex}{1.5ex plus 0ex}
\newcommand{\Erdos}{Erd\H{o}s}
\newcommand{\Folner}{F\o{}lner}
\newtheorem{theo}{Theorem}[section]
\newtheorem{cor}[theo]{Corollary}
\newtheorem{question}[theo]{Question}
\newtheorem{lemma}[theo]{Lemma}
\newtheorem{prop}[theo]{Proposition}
\newtheorem*{prop*}{Proposition}
\newtheorem{thmx}{Theorem}
\newtheorem{corox}[thmx]{Corollary}
\newtheoremstyle{definition}{2mm}{2mm}{}{}{\bfseries}{.}{.5em}{}
\theoremstyle{definition}
\newtheorem{defn}[theo]{Definition}
\newtheorem{remark}[theo]{Remark}
\newtheorem*{remark*}{Remark}
\theoremstyle{plain}
\newtheorem*{namedthm}{\namedthmname} 
\newenvironment{named}[2]{%
  \def\namedthmname{#1}
  \begin{namedthm}[#2]%
  \def\@currentlabelname{#1}
}{%
  \end{namedthm}%
}
\newcommand{\N}{\mathbb{N}}
\newcommand{\Z}{\mathbb{Z}}
\newcommand{\C}{\mathbb{C}}
\newcommand{\R}{\mathbb{R}}
\newcommand{\cM}{\mathcal{M}}
\DeclareMathOperator\supp{supp}
\DeclareMathOperator\gen{gen}
\newcommand{\T}{\mathbb{T}}
\newcommand{\norm}[1]{\left\lVert#1\right\rVert}
\newcommand*\diff{\mathop{}\!\mathrm{d}}
\providecommand{\norm}[1]{\lVert #1\rVert}
\newcommand{\1}{\mathbbm{1}}
\newcommand{\E}{\operatorname{\mathbb{E}}}
\begin{document}
\author{By~~{\scshape Felipe~Hernández}~~and~~{\scshape Ioannis~Kousek }~~and~~{\scshape Tristán~Radić}}
\date{\small \today}
\title{{\textbf{On density analogs of Hindman's finite sums theorem}}}
\maketitle

\begin{abstract}
For any set $A$ of natural numbers with positive upper Banach density, we show the existence of an infinite set $B$ and sequences $(t_k)_{k\in \N}, (s_k)_{k\in \N}$ of natural numbers such that $\left\{ \sum_{n \in F}n : F \subset B + s_k, 1 \leq |F|  \leq k \right\}\subset A-t_k$, for every $k\in \N$. This strengthens the density finite sums theorem of Kra, Moreira, Richter, and Robertson. We further show, given such a set $A$, the existence of an infinite set $B$ and a sequence 
$(t_k)_{k\in \N}$ of natural numbers such that $\left\{ \sum_{n \in F}n : F \subset B, |F|  = k \right\}\subset A-t_k$, for every $k\in \N$. As a corollary, we obtain a sequence $(B_n)_{n\in \N}$ of infinite sets of natural numbers such that $B_1+\cdots +B_k \subset A$, for every $k\in \N$. We also establish the optimality of our main theorems by providing 
counterexamples to potential further generalizations, and thereby addressing 
questions of the aforementioned authors in the context of density analogs to 
Hindman’s finite sums theorem.
\end{abstract}

\section{Introduction}

\subsection{Main results}
In 1974, Hindman \cite{Hindman74} proved the celebrated finite sums theorem, showing that for any finite partition of the natural numbers, $\N = C_1 \cup C_2 \cup \cdots \cup C_k$, there exists an infinite set $B \subset \N$ such that 
\begin{equation}\label{Hindman's theorem sumsets}
\Big\{ \sum_{n \in F}n \colon F \subset B, \ 1\leq |F| < \infty \Big\} \subset C_i, \quad \text{ for some } i=1, \ldots, k.
\end{equation}
Erd\H{o}s \cite{Erdos77, erdHos1980survey,erdos2006problems} was interested in 
extending Hindman's result to a density analog, namely by finding infinite 
sumset patterns similar to those in \eqref{Hindman's theorem sumsets}, in any set $A\subset \N$ whose upper (Banach) density,
\begin{equation*}
    \diff^{*}(A)=\limsup_{N-M\to \infty}\frac{|A\cap \{M,M+1,\ldots,N-1\}|}{N-M},
\end{equation*}
is positive. Due to parity obstructions -- consider, for instance, the odd numbers -- shifting the finite sumsets in \eqref{Hindman's theorem sumsets} is necessary. An example of Straus \cite[Theorem $11.6$]{Hindman_ultrafilters} showing that a single shift of these patterns cannot always be found in sets of positive density, rendered unclear what could potentially be a density analog of Hindman's theorem.

A series of breakthroughs in the last decade \cite{diNasso_Golbring_Jin_Leth_Lupini_Mahlburg2015sumset,host2019short,Kra_Moreira_Richter_Robertson:2022,Kra_Moreira_Richter_Robertson:2023,kmrr25, Moreira_Richter_Robertson19} revealed that infinite sumset patterns of increasing complexities can be found in sets of positive upper Banach density. These results 
verified conjectures predicted by Erd\H{o}s and gave rise to further natural questions and conjectures (most of which were formulated in \cite{Kra_Moreira_Richter_Robertson_problems}). In particular, using structure theory of multiple ergodic averages, Kra, Moreira, Richter and Roberson proved the following truncated density version of Hindman's theorem.
\begin{named}{Theorem KMRR}{{\cite[Theorem 1.1]{kmrr25}}}\label{main theorem kmrr 1}
Let $A\subset \N$ be a set with $\diff^*(A)>0$. Then, for every $k\in \N$, there exists an infinite set $B_k \subset \N$ and an integer $t_k\geq 0$ such that 
\begin{equation*} 
    \bigg\{ \sum_{n\in F} n \colon \ F\subset B_k \text{ with } 1\leq |F|\leq k \bigg\} \subset A-t_k.
\end{equation*}
\end{named}
Again, Straus's example shows that one 
cannot choose the set $B_k$ and the shift $t_k$ independently of $k$.
However, the same authors asked if a sequence of shifts to the dense sets would allow for an unbounded number of summands generated by a fixed infinite set.
\begin{question}[{\cite[Question 2.11]{Kra_Moreira_Richter_Robertson_problems}}]  \label{question kmrr intro}
Let $A\subset \N$ be a set with $\diff^*(A)>0$. Does there exist an infinite set $B \subset \N$ and a sequence of shifts $(t_k)_{k\in \N}\subset \N$ such that
$$ \bigg\{ \sum_{n\in F} n \colon \ F\subset B \text{ with } 1\leq |F|\leq k \bigg\} \subset A-t_k,\ \text{for every}\ k\in \N\ ?$$
\end{question}

We answer Question \ref{question kmrr intro} 
in the negative (see \cref{1.2 is optimal (ii)}) by constructing a set of density 
arbitrarily close to $1$ for which even a weaker version of the conclusion  
fails. However, our main results, Theorems \ref{theo-A} and \ref{theo-B}, extend 
\nameref{main theorem kmrr 1} in two different directions, achieving in each case a certain degree of independence between the infinite set $B$ and the number of summands $k \in \N$. 
\begin{thmx}\label{theo-A}
    Let $A\subset\N$ be a set with $\diff^*(A)>0$ and $\ell\geq 0$ an integer. Then, there exist an infinite set $B_{\ell} \subset \N$ and a sequence $(t_{\ell,k})_{k\in \N}\subset \N$, such that 
        $$ \bigg\{ \sum_{n\in F} n \colon \ F\subset B_{\ell} \text{ with } k\leq |F|\leq k+ \ell \bigg\} \subset A-t_{\ell,k}, \ \text{for every}\ k\in \N.$$
\end{thmx}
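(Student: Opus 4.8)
The plan is to follow the ergodic-theoretic route behind \nameref{main theorem kmrr 1}, the one new feature being that the infinite set must be built so as to serve all the windows $[k,k+\ell]$, $k\in\N$, simultaneously. Throughout, for a finite $F\subset\N$ write $\sigma(F):=\sum_{n\in F}n$.

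\emph{Correspondence and reduction.} First I would apply the Furstenberg correspondence principle: associate to $A$ the subshift $(X,T)$ generated by $\1_A\in\{0,1\}^{\Z}$, with $\mu$ a weak$^*$ limit of the empirical measures $\nu_i:=\tfrac{1}{N_i-M_i}\sum_{M_i\le n<N_i}\delta_{T^n\1_A}$ along intervals realizing $\diff^*(A)$, and $E=\{x:x(0)=1\}$, so that $E$ is clopen and $\mu(E)=\diff^*(A)>0$; one may take $\mu$ ergodic (ergodic decomposition). It then suffices to produce an infinite $B_\ell=\{b_1<b_2<\cdots\}$ such that for every $k\in\N$ the set
\begin{equation*}
Z_k:=\bigcap\bigl\{\,T^{-\sigma(F)}E : F\subset B_\ell,\ k\le|F|\le k+\ell\,\bigr\}
\end{equation*}
is met by the forward orbit of $\1_A$, i.e. $T^t\1_A\in Z_k$ for some $t\ge0$: such a $t$ gives $\sigma(F)+t\in A$ for every $F$ in the $k$-th window, which is exactly the desired conclusion. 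As in \nameref{main theorem kmrr 1}, the way to secure such a $t$ is to descend to the Host--Kra pro-nilfactor $\pi\colon X\to\mathcal Z$ and arrange that $Z_k$ contains, up to a $\mu$-null set, the $\pi$-preimage of a positive-measure open subset $V_k$ of $\mathcal Z$; since $\pi(\1_A)$ equidistributes in $\mathcal Z$ along $(\nu_i)$ and $V_k$ is open, its forward orbit meets $V_k$, producing $t$ (in fact a set of such $t$ of positive upper Banach density).

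\emph{Construction of $B_\ell$.} This is where the real work lies. I would build $b_1<b_2<\cdots$ recursively, keeping for each window index $k$ a ``working set'' of positive measure, by combining two ingredients from the analysis behind \nameref{main theorem kmrr 1}. First, the finite-sum averages $\tfrac1N\sum_{n\le N}T^{\sigma(F)}\1_E$ over subsets $F$ of bounded size are governed by finitely many Host--Kra factors, the non-structured error being negligible for them; exploiting this when appending $b_j$, I would keep $\mu\bigl(\bigcap\{T^{-\sigma(F')}E:F'\subset\{b_1,\dots,b_j\},\ |F'|\le k+\ell\}\bigr)$ bounded below for every $k$ already ``active'' at stage $j$. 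Second, each factor of $\mathcal Z$ being an inverse limit of nilsystems admits rigid times; taking $b_j$ to be a simultaneous $\varepsilon_j$-rigid time (with $\sum_j\varepsilon_j<\infty$) for the finitely many factors then in play forces every tail sum $\sigma(F)$, $F\subset\{b_j,b_{j+1},\dots\}$, to act on $\mathcal Z$ as a translation within $O(\sum_{i\ge j}\varepsilon_i)$ of the identity --- the point being that these displacements add up, whereas the error term does not persist. With the bookkeeping arranged so that window $k$ becomes active only from some stage $J(k)$ on, splitting any $F\subset B_\ell$ with $k\le|F|\le k+\ell$ as $F=F'\sqcup F''$ with $F'\subset\{b_1,\dots,b_{J(k)}\}$ and $F''$ a tail would then show that $Z_k$ contains, mod $\mu$-null, the $\pi$-preimage of a fixed positive-measure open set of $\mathcal Z$ --- the one coming from the finitely many $F'$, shrunk to absorb the tail displacements --- which is precisely what the reduction demands.

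\emph{Main obstacle.} The routine parts are the correspondence, the splitting $F=F'\sqcup F''$ (uniform in $k$ exactly because $\ell$ is fixed), and passing from a hit in $\mathcal Z$ back to an honest integer shift. The genuine difficulty is carrying out the recursion uniformly: a single sequence $(b_j)$ has to be, at stage $j$, simultaneously a good choice for every window $[k,k+\ell]$ with $J(k)\le j$ --- windows whose top sizes $k+\ell$ are unbounded, hence involving characteristic factors of unbounded step. What I expect to be the crux is therefore a version of the structure theory underlying \nameref{main theorem kmrr 1} that is uniform in the step, together with a choice of the $b_j$ reconciling the sparsity forced by rigidity with keeping all the working sets of positive measure at once.
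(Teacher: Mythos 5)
Your proposal shares the paper's scaffolding — Furstenberg correspondence, descent to Host--Kra pronilfactors, and an inductive construction of $B_\ell$ — but the device you propose for carrying the recursion does not close, and the gap is concrete. You want each $Z_k=\bigcap_F T^{-\sigma(F)}E$ to contain, mod $\mu$-null, the $\pi$-preimage of a positive-measure open subset of $\mathcal Z$; but already for the finitely many ``initial'' sets $F'$, there is no reason for $\bigcap_{F'}T^{-\sigma(F')}E$ to contain any $\pi$-preimage of a nonempty open set, even up to a null set. The set $E$ is not $\pi$-saturated, and characteristic-factor theory controls \emph{averages} of products of translates of $\1_E$, not the sets themselves; so ``$T^{\sigma(F'')}$ is near-identity on $\mathcal Z$'' has nothing to bite on. There is a second, related weakness: even granting $\mu(Z_k)>0$, the set $Z_k$ is a countable intersection of clopen sets, not open, so genericity of $\1_A$ does not produce a time $t$ with $T^t\1_A\in Z_k$. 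You are aware something is missing (``what I expect to be the crux is a version of the structure theory uniform in the step''), but the rigid-times heuristic is not a substitute for that missing piece.

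The paper's actual mechanism is quite different and resolves both issues at once. It builds a single measure $\sigma\in\mathcal M(X^{\N_0})$ (\cref{definition of sigma}), lifted through the continuous infinite-step pronilfactor and carrying a Dirac mass $\delta_a$ in the $0$-th coordinate. Positivity of $\sigma_{k+\ell+1}$ on a product set whose $0$-th projection is forced to sit inside $\{a\}$ then directly certifies statements of the form $T^{t}a\in T^{-\sigma(F)}E$, eliminating the ``mod null versus specific orbit'' tension. The recursion is driven by two multiple-recurrence-type properties of $\sigma$: left-progressiveness (\cref{left-progressive proof}), essentially the KMRR progressiveness, which supplies the new element $b_{k+1}$; and a new \emph{right}-progressiveness (\cref{right-progressive proof}), which supplies the growing shift $t_{k+1}$ and is exactly what permits the number of summands to increase without changing $B$. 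Both are proved via seminorm control of the relevant averages rather than via any topological rigidity. Your instinct about splitting $F=F'\sqcup F''$ and about $t_k$ necessarily being unbounded is right, but producing the positive-measure statements that sustain the induction requires the progressive-measure apparatus, and that is precisely what the paper develops and what your sketch leaves as a placeholder.
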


In Section \ref{counterexamples section}, we show that 
\cref{theo-A} is optimal in the sense that one cannot choose the set 
$B_{\ell}$ independently of $\ell$. 
We also show that the sequence of shifts $(t_k)_{k \in \N}$ cannot, in general, be bounded. Moreover, by the proof of \cref{theo-A}, we can choose the sequence $(t_{\ell,k})_{k \in \N}$ to be increasing.

\begin{thmx}\label{theo-B}
    Let $A\subset\N$ be a set with $\diff^*(A)>0$. Then, there exist an infinite set $B$ and sequences $ (t_k)_{k\in \N}, 
    (s_k)_{k\in \N} \subset \N$ such that
    $$ \bigg\{ \sum_{n\in F} n \colon \ F\subset B+s_k \text{ with } 1\leq |F|\leq k \bigg\} \subset A-t_k ,\ \text{for every}\ k\in \N.$$
\end{thmx}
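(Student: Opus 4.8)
The plan is to deduce Theorem~B from Theorem~A together with a compactness/diagonalization argument. Observe first that Theorem~A, applied with a fixed $\ell$, gives for each $k$ a single infinite set and a window of $\ell+1$ consecutive cardinalities; the obstacle to iterating this across all $k$ is that the infinite sets $B_\ell$ depend on $\ell$. The key idea is that the shift $s_k$ in the statement of Theorem~B buys exactly the flexibility needed to ``restart'' the construction at each scale: instead of requiring one infinite set $B$ to work simultaneously for every cardinality, we ask that a \emph{shifted copy} $B+s_k$ works for cardinalities $1,\dots,k$, and we glue the tails of a carefully chosen nested sequence of sets.

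Concretely, I would proceed as follows. Using \nameref{main theorem kmrr 1} (or Theorem~A with $\ell = 0$) as the engine, build recursively a decreasing sequence of infinite sets $B^{(1)}\supset B^{(2)}\supset\cdots$ and shifts $u_1,u_2,\dots$ such that, for each $k$, $\{\sum_{n\in F}n : F\subset B^{(k)},\ 1\le |F|\le k\}\subset A - u_k$. At step $k$ one applies the theorem to the set $A' = (A - u_{k-1}) \cap (\text{appropriate density-positive piece})$, or more precisely re-runs the ergodic-structural argument relative to the set obtained after the previous stage, so that $B^{(k)}$ is an infinite subset of $B^{(k-1)}$. Then let $B$ be a diagonal set: choose $b_k \in B^{(k)}$ with $b_k$ growing fast, and set $B = \{b_k : k\in\N\}$. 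The point is that for any fixed $k$, the tail $\{b_j : j\ge k\}$ is contained in $B^{(k)}$, hence every $F\subset B$ with $1\le|F|\le k$ lying in that tail has $\sum_{n\in F}n \in A - u_k$; the finitely many ``small'' elements $b_1,\dots,b_{k-1}$ are absorbed by replacing $B$ with a shift $B + s_k$, where $s_k$ is chosen (using positivity of $\diff^*(A)$ and a pigeonhole over shifts, as in Straus-type arguments) so that adding $s_k$ to each of the relevant finite sums lands back inside $A$. Setting $t_k = u_k + k\, s_k$ then records the net translation, since $\sum_{n\in F}(n + s_k) = \sum_{n\in F} n + |F|\, s_k$ and $|F|\le k$.

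The main obstacle I anticipate is the interaction between the \emph{number} of summands and the shift: because $\sum_{n\in F}(n+s_k) = \sum_{n\in F}n + |F| s_k$ and $|F|$ ranges over $1,\dots,k$, a single shift $s_k$ produces $k$ different translates $\sum n + j s_k$ ($j=1,\dots,k$) that must \emph{all} lie in $A$. Handling this cleanly requires choosing $s_k$ so that $A$ contains a suitable ``$k$-term arithmetic-progression-like'' alignment of the finite-sum configuration — this is where one invokes the structure theory (a positive-density set has an ergodic-theoretic Kronecker/nilfactor description allowing one to find such $s_k$ inside a Bohr-type set), rather than a bare pigeonhole. I would isolate this as a lemma: given a positive-density set $A$ and an infinite set $C$ with all finite sums of size $\le k$ in $A$, there is a shift $s$ with all finite sums of size $\le k$ of $C+s$ still in $A$ (equivalently $\bigcap_{j=1}^{k}(A - j s)$ meets the relevant sumset). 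With that lemma in hand the diagonalization above goes through, and the increasing nature of $(t_k)$ and $(s_k)$ can be arranged by passing to a further subsequence at each stage.
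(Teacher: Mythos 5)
Your first step already contradicts the paper's own negative results and cannot be carried out in general. You propose building a nested sequence of infinite sets $B^{(1)}\supset B^{(2)}\supset\cdots$ and shifts $u_k$ with
$\big\{\sum_{n\in F}n : F\subset B^{(k)},\ 1\le |F|\le k\big\}\subset A-u_k$ for every $k$. But \cref{1.2 is optimal (ii)} and \cref{counterexample 1}(1) exhibit a set $A$ of density arbitrarily close to $1$ for which \emph{no} nested sequence $(B_k)$ together with \emph{any} shift sequence $(t_k)$ can achieve exactly this (that is even the content of \cref{KMRR Question 2.12}, which is answered negatively). So there is no way to ``re-run the ergodic-structural argument relative to the set obtained after the previous stage'' to make the sets nested: the resulting $B^{(k)}$'s produced by \nameref{main theorem kmrr 1} are genuinely incomparable, and that incomparability is not an artifact of the proof method but a real combinatorial obstruction. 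Your plan needs the nestedness precisely so that the tail $\{b_j : j\ge k\}$ lies in $B^{(k)}$; without it the diagonalization has nothing to diagonalize over.

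The second gap is in the ``absorption'' of the small elements $b_1,\dots,b_{k-1}$ and the role assigned to $s_k$. You compute correctly that $\sum_{n\in F}(n+s_k)=\sum_{n\in F}n+|F|s_k$, but the difficulty is not that the well-behaved tail-sums need an extra shift (for those one could simply take $s_k=0$, $t_k=u_k$); it is that sums $\sum_{n\in F}n$ where $F$ contains some of $b_1,\dots,b_{k-1}\notin B^{(k)}$ are entirely uncontrolled, and a single translation by $|F|s_k$ cannot force finitely many arbitrary integers into $A-t_k$ simultaneously for all $1\le |F|\le k$, since the targets move at $k$ different speeds $j s_k$, $j=1,\dots,k$. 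The lemma you isolate (``given $C$ with all sums of size $\le k$ in $A$, there is a shift $s$ with all sums of $C+s$ of size $\le k$ still in $A$'') presupposes the sums were already in $A$, which is exactly what fails for the small elements, so it does not address the actual obstacle. The paper sidesteps all of this by not attempting any diagonalization over the output of \nameref{main theorem kmrr 1}: it constructs $B$, $(t_k)$ and $(s_k)$ \emph{simultaneously} via an induction inside the dynamical reformulation (\cref{dynamical reform B}, proved through \cref{topological pronilfactors 1}), using a genuinely new ingredient, multiple right-progressiveness of the measure $\sigma$ (\cref{def multiple right progressive}, \cref{multiple-recurrence-T-Delta-uncoupled}), which is a multiple-recurrence phenomenon controlled by the infinite-step pronilfactor. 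The double shift $(t_k,s_k)$ is not a post-hoc patch but is produced at each inductive step by that multiple recurrence.
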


Letting $B_k=B+s_k$, for every $k\in \N$, we see that the strength of this result over \nameref{main theorem kmrr 1} is that the sets $(B_k)_{k\in \N}$ are shifts of a single infinite set $B\subset \N$. Our construction in Section \ref{counterexamples section} shows that one cannot in general remove the shift sequences $(t_k)_{k\in \N},(s_k)_{k\in \N}$, as well as the necessity of them being unbounded. The second auxiliary sequence in \cref{theo-B}, as opposed to merely shifting the dense set, is naturally introduced to overcome the further local obstructions that arise in answering \cref{question kmrr intro}. These results are the first to guarantee sumsets 
with arbitrarily large numbers of summands generated by
fixed infinite sets, contained in sets of positive density. Furthermore, we prove a combination of the conclusions of Theorems \ref{theo-A} 
and \ref{theo-B} (see \cref{Mix-of-Theo-A-and-B} for more details).

The main result of \cite{Kra_Moreira_Richter_Robertson:2022} shows that for any 
$k\in \N$, any set of positive density contains a sumset of the form 
$B_1+\cdots + B_k$, where $B_1,\ldots,B_k$ are infinite. As a corollary of our 
main theorems, we strengthen this by removing the dependence of the infinite sets on $k$. This 
positively answers \cite[Question 8.1]{Kra_Moreira_Richter_Robertson:2022} and 
verifies \cite[Conjecture 2.10]{Kra_Moreira_Richter_Robertson_problems}.
\begin{corox}\label{coro-C}
    Let $A\subset\N$ be a set with $\diff^*(A)>0$. Then, there exists an infinite sequence $(B_n)_{n\in \N}$ of infinite sets of natural numbers such that 
    $$B_1 + B_2 + \cdots + B_k\subset A,\ \text{for every}\ k\in \N.$$
\end{corox}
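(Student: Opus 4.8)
The plan is to derive \cref{coro-C} directly from \cref{theo-B} by a diagonal intersection argument. Given $A$ with $\diff^*(A)>0$, apply \cref{theo-B} to obtain an infinite set $B=\{b_1<b_2<\cdots\}$ and sequences $(t_k)_{k\in\N},(s_k)_{k\in\N}$ such that every finite subset $F\subset B+s_k$ with $1\le|F|\le k$ satisfies $\sum_{n\in F}n\in A-t_k$. The idea is to build the sets $B_n$ as suitably shifted, far-separated ``blocks'' cut out of $B$, so that picking one element from each of $B_1,\dots,B_k$ produces a $k$-element subset of a single shift $B+s_k$ whose sum, after absorbing the shifts $t_k$ and $s_k$ into the definition of the blocks, lands in $A$.

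Concretely, I would proceed as follows. First, observe that for a fixed $k$, choosing $F=\{c_1,\dots,c_k\}$ with each $c_i\in B+s_k$ gives $c_1+\cdots+c_k\in A-t_k$, i.e. $(c_1-s_k)+\cdots+(c_k-s_k)\in A-t_k-ks_k$. So if I set $B_i^{(k)}:=(\text{some infinite subset of }B) $ and shift appropriately, a sum of one element from each lands in $A$ once I also arrange the additive constant $t_k+ks_k$ to be cancelled. The cleanest route is to not fix the $B_n$ first but to build them recursively so that they are ``compatible'' with all $k$ simultaneously: choose an increasing sequence $b_{i_1}<b_{i_2}<\cdots$ in $B$ growing fast enough that for every $k$, the first $k$ chosen elements can be used together. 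Then define $B_n$ to be the single shift of the $n$-th selected element that makes the telescoping of the constants $t_k,s_k$ work — since for $B_1+\cdots+B_k\subset A$ we only ever use the first $k$ of the $B_n$, and the constraint from \cref{theo-B} at level $k$ only involves $s_k$ and $t_k$, we can absorb these into translations $B_n\mapsto B_n+r_n$ chosen so that the level-$k$ requirement is met for each $k$; because each $B_n$ is infinite, replacing it by a cofinite subset or a single translate costs nothing.

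The one genuine subtlety is consistency of the translations across different values of $k$: the element selected for $B_n$ must simultaneously satisfy the constraint coming from \cref{theo-B} at \emph{every} level $k\ge n$, and those constraints use different shifts $s_k,t_k$. This is handled by making the $B_n$ not translates of fixed points but infinite sets, and choosing, for each $k$, an element of $B_n$ lying in the residue/shift class dictated by level $k$; since $B$ is infinite one extracts an infinite subset $B_n\subset B$ such that for every $k\ge n$ the set $B_n+(\text{appropriate constant depending on }k)$ is contained in $B+s_k$ along a cofinite part, and then the sum of one representative from each of $B_1,\dots,B_k$ is an admissible $F$ for \cref{theo-B} at level $k$. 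Passing to a single infinite subsequence via a diagonal argument over $k$ yields the sets $(B_n)_{n\in\N}$ with $B_1+\cdots+B_k\subset A$ for all $k$.

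I expect the main obstacle to be exactly this bookkeeping: making precise how the pair of shift sequences $(s_k)_{k\in\N}$ and $(t_k)_{k\in\N}$ from \cref{theo-B} is absorbed uniformly into the construction of the $B_n$, so that one fixed family works for all $k$ at once. Everything else — that each $B_n$ is infinite, that the sum of one element from each block stays below the ``level-$k$ window'' $1\le|F|\le k$ (here $|F|=k$ exactly, which is fine), and that diagonalization preserves infiniteness — is routine. I would remark that \cref{theo-A} could be used as an alternative input, but \cref{theo-B} is more convenient since its sets are all shifts of a single $B$, which is precisely the compatibility one needs to glue the blocks together.
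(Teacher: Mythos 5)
Your underlying idea — translate infinite sub-blocks of $B$ so that the shifts from the main theorem telescope away — is exactly the paper's, but you stop short of pinning down the construction and, in trying to circumvent a "subtlety" that does not actually exist, you reach for a diagonal argument that is unnecessary. The paper derives \cref{coro-C} from \cref{theo-A} with $\ell=0$ (so only one shift sequence $(t_k)$ needs absorbing), but it explicitly remarks that \cref{theo-B} also works, so your choice of input is fine.

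Here is what your proposal is missing, stated concretely. Write $B=\{b_1<b_2<\cdots\}$, let $(p_k)_{k\in\N}$ enumerate the primes, set $r_1=s_1+t_1$ and $r_k=ks_k+t_k-(k-1)s_{k-1}-t_{k-1}$ for $k\ge2$, and define
\[
B_k:=\{b_{p_k^n}+r_k : n\in\N\}\cap\N.
\]
Given any $c_i=b_{p_i^{n_i}}+r_i\in B_i$ for $i=1,\dots,k$, the indices $p_1^{n_1},\dots,p_k^{n_k}$ are pairwise distinct by coprimality, so $F=\{b_{p_1^{n_1}}+s_k,\dots,b_{p_k^{n_k}}+s_k\}\subset B+s_k$ has size $k$ and \cref{theo-B} yields $\sum_i b_{p_i^{n_i}}+ks_k+t_k\in A$. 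Since $\sum_{i=1}^k r_i=ks_k+t_k$ telescopes, we get $c_1+\cdots+c_k\in A$. Each $B_k$ remains infinite because $b_{p_k^n}\to\infty$. The prime-power indexing is precisely the device the paper uses to secure distinctness of the summands drawn from different blocks — this is what your vague "far-separated blocks" intuition was grasping at and is the one genuinely necessary ingredient you did not supply. Your worry about "consistency across $k$" — that each $B_n$ must meet a constraint at every level $k\ge n$ — is illusory: the constraint only involves the cumulative sum $r_1+\cdots+r_k$, not each $B_n$ separately, so the blocks are fixed once and for all by the telescoping formula, and no diagonalization over $k$ is needed.
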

We remark that \cref{theo-A} for $\ell=0$ already implies 
\cref{coro-C} and the details can be found in 
\cref{translation to dynamics and some reductions}. Similarly, \cref{theo-B} 
also implies \cref{coro-C}.

\subsection{Dynamical reformulation and key ingredients} \label{sec outline of main argument}

In this section, we summarize our approach to prove \cref{theo-A} and \cref{theo-B}, highlighting the new tools developed to address them. We make use of notation and terminology that is later defined in \cref{notational conventions} and \cref{Terminology}.

 Inspired by the aforementioned literature \cite{hernandez2025infinite,host2019short,Kra_Moreira_Richter_Robertson:2023,Kra_Moreira_Richter_Robertson:2022,kmrr25}, after using Furstenberg's correspondence principle, we reinterpret the combinatorial statements into an ergodic language. 
Concerning \cref{theo-A}, the dynamical result that implies it is the following. 

\begin{theo}\label{dynamical reform A}
Let $(X,\mu,T)$ be an ergodic measure preserving system with a point $a\in X$ such that $\mu\left(\overline{\{T^na: n\in \N\}}\right)=1$. Let also $\ell \geq 0$ be fixed and $E\subset X$ be an open set with $\mu(E)>0$. Then, there exist an infinite set $B \subset \N$ and sequence $(t_k)_{k\in \N} \subset \N$ such that 
\begin{equation} \label{eq rec for block equation}
    T^{\left(t_k+ b_{1}+\cdots + b_{i}\right)}a \in E,
\end{equation}
for all $k\in \N$, all $k\leq i \leq k+\ell$, and $b_1, b_2, \ldots, b_i $ distinct elements in $B$.
\end{theo}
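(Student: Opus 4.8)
The plan is to reduce, via the structure theory of the relevant multicorrelation sequences, to the case where $(X,\mu,T)$ is a rotation on a compact abelian group, and then to build $B$ and the shifts explicitly. Write $\sigma(F)=\sum_{n\in F}n$; the conclusion \eqref{eq rec for block equation} then asks, for each $k$, that $T^{t_k+\sigma(F)}a\in E$ for every finite $F\subset B$ with $k\le|F|\le k+\ell$. Because the configurations $\sigma(F)$ are \emph{linear} in the elements of $B$, the only correlations that enter — namely the two‑point correlations $b\mapsto\int g\cdot T^{b}h\,d\mu$ that arise when one adjoins a new element $b$ to $B$ — are governed by the Kronecker factor $(Z,m_Z,R_\alpha)$ of the system, i.e.\ a rotation $z\mapsto z+\alpha$ on a compact metrizable abelian group with $\{n\alpha:n\ge1\}$ dense. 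So the first step is to make precise, in a form adapted to our quantifier structure, the reduction: it suffices to prove \cref{dynamical reform A} when $X=Z$ is such a rotation, $a=0$, and $E\subset Z$ is open with $m_Z(E)>0$.

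On a group rotation the construction is short. If $Z$ is finite then $\alpha$ has some order $q$, the orbit of $0$ is periodic, and $B=q\N$ together with any constant $t_k=t$ with $t\alpha\in E$ works (since then $\sigma(F)\alpha=0$ for every finite $F$); so assume $Z$ is infinite, whence $0$ is an accumulation point of $\{n\alpha:n\ge N\}$ for every $N$. Fix a translation‑invariant metric $d$ on $Z$, a point $z_0\in E$, and $\delta>0$ with $\{z:d(z,z_0)<2\delta\}\subset E$. Pick a summable sequence $r_i>0$ with $\sum_i r_i<\delta$ and then $b_1<b_2<\cdots$ with $d(b_i\alpha,0)<r_i$ for each $i$, and set $B=\{b_i:i\ge1\}$. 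By translation‑invariance and the triangle inequality $d(\sigma(F)\alpha,0)\le\sum_{b\in F}d(b\alpha,0)<\delta$ for \emph{every} finite $F\subset B$, \emph{regardless of} $|F|$. Finally, for each $k$ choose $t_k$ with $d(t_k\alpha,z_0)<\delta$ (possible, as $\{n\alpha\}$ is dense); then $d\big((t_k+\sigma(F))\alpha,z_0\big)<2\delta$, so $T^{t_k+\sigma(F)}a=(t_k+\sigma(F))\alpha\in E$ for all $F$ with $k\le|F|\le k+\ell$. On the Kronecker factor the shifts may even be taken constant; their unboundedness in the general statement, noted after \cref{theo-A}, is forced only by the lift, and the window $k\le|F|\le k+\ell$ together with the freedom in $(t_k)$ are what leave room to absorb the local obstructions of Straus's example.

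Combining the two steps requires care, since a \emph{single} $B$ must serve all $k$ and \eqref{eq rec for block equation} is a pointwise assertion about the orbit of $a$ along the sparse, $B$‑dependent set of times $\{t_k+\sigma(F)\}$, not an averaged one. My route would be to interleave the descent and the construction, building $B$ recursively: at stage $m$, having chosen $b_1<\cdots<b_m$, maintain for every $k\le m$ a descending ``candidate‑shift'' set $G_k^{(m)}$ — a large (positive‑density, or Bohr) set of $t$ for which all $F\subset\{b_1,\dots,b_m\}$ with $k\le|F|\le k+\ell$ already satisfy \eqref{eq rec for block equation} — and choose $b_{m+1}$, using syndetic (Bohr) recurrence coming from the Kronecker structure, so that these finitely many sets remain large. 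One then extracts $t_k$ as a point of $\bigcap_m G_k^{(m)}$, nonempty by compactness. Granting \cref{dynamical reform A}, \cref{theo-A} and \cref{coro-C} follow through the reductions in \cref{translation to dynamics and some reductions}.

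I expect the main obstacle to be the descent to the Kronecker factor itself. One must show that an infinite set $B$ built only to steer the Kronecker component of the orbit of $a$ along $\{t_k+\sigma(F)\}$ nonetheless drives $T^{t_k+\sigma(F)}a$ into the \emph{open} set $E\subset X$ for \emph{every} $k$ — in effect, that the non‑Kronecker (``weakly mixing'') part of the system equidistributes along these very sparse, $B$‑dependent sets of times, uniformly in $k$ and compatibly with the recursive choice of the $b_i$. Handling this is where the genuinely new tools of the paper are needed.
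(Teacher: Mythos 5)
The central claim underpinning your reduction --- that ``the only correlations that enter \ldots\ are governed by the Kronecker factor'' because the configurations $\sigma(F)$ are linear in the elements of $B$ --- is incorrect, and this is precisely the obstruction that the paper's machinery is built to overcome. When one adjoins a new element $b_{m+1}$ to $\{b_1,\dots,b_m\}$ and demands that the sums $b_{j_1}+\cdots+b_{j_i}$ land in $A-t_k$ for \emph{several} values of $i$, the relevant averages are multiple ergodic averages of the form $\frac1N\sum_n f_1(T^n x)f_2(T^{2n}x)\cdots f_i(T^{in}x)$, and the characteristic factor for these is the $(i-1)$-step pronilfactor $Z_{i-1}$, not the Kronecker factor $Z_1$. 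This is already why \nameref{main theorem kmrr 1} (the bounded-$k$ case) requires $Z_{k-1}$, and why the present paper passes to the \emph{infinite}-step pronilfactor $Z_\infty$ in \cref{definition of sigma}: the number of summands is unbounded. A concrete failure of your scheme: take a $2$-step affine nilsystem such as $T(x,y)=(x+\alpha,\,y+2x+\alpha)$ on $\T^2$, whose Kronecker factor is the first coordinate. Your construction forces $b_i\alpha\to0$ rapidly, so all $\sigma(F)\alpha$ are small; but $T^{\sigma(F)}(0,0)=(\sigma(F)\alpha,\,\sigma(F)^2\alpha)$, and the quantity $\sigma(F)^2 = (\sum_{b\in F}b)^2$ has cross-terms $2b_ib_j$ that your Kronecker-only constraints say nothing about. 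There is no reason $\sigma(F)^2\alpha$ should accumulate near a chosen target, uniformly over the required $F$'s, and in general it will not --- the second coordinate does \emph{not} ``equidistribute'' along the sparse, highly structured set $\{\sigma(F):F\subset B\}$; it is itself highly structured, just invisibly so at the Kronecker level.

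Your paragraph acknowledging that the descent ``is where the genuinely new tools of the paper are needed'' is candid, but it leaves unproved exactly the substantive part of the theorem; the Kronecker construction you give in detail is the part that would be easy in any approach. The paper's actual route is quite different in spirit: rather than steering a single orbit on a structured factor and hoping the rest equidistributes, it constructs a measure $\sigma$ on $X^{\N_0}$ from the infinite-step pronilfactor (\cref{definition of sigma}), proves it is left- and right-progressive (\cref{progressive proof}), and runs an induction (\cref{topological pronilfactors 2}) in which each new $b_{k+1}$ and $t_{k+1}$ is produced by a positivity statement about $\sigma$, so that all the required containments are encoded simultaneously in the positivity of the measure of one explicit intersection of product sets. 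Your ``interleave the descent and the construction'' idea is pointing in the right direction --- the paper's induction is indeed of that flavour --- but the bookkeeping object it maintains is a positive-measure set under $\sigma$ on $X^{k+\ell+1}$, not Bohr sets on the Kronecker factor, and this difference is essential, not cosmetic.
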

To prove a similar dynamical statement, the authors of \cite{kmrr25} introduce a family of measures on finite product spaces, which they call \emph{progressive measures} (see \cref{definition left-right progressive measure}). The positivity of finite cartesian products of open sets with respect to such measures is used to recover \nameref{main theorem kmrr 1}. However, since we are looking for sumset patterns with an unbounded number of summands, we need to deal with infinite cartesian products of open sets and in general, these sets have zero measure with respect to any useful measure. 

We then first adapt the definition of 
progressive measures to measures on $X^{\N_0}$ and build one, denoted by $\sigma \in \cM(X^{\N_0})$, by lifting a natural measure $\xi$ on $Z^{\N}_{\infty}$, where $(Z_{\infty},m,T)$ is the infinite-step pronilfactor. This is a natural approach, and it turns out that the marginals
of this measure $\sigma$ in the first $k+1$ coordinates do not necessarily coincide with the measures constructed in \cite{kmrr25} by lifting measures from the $k$-step pronilfactor $Z_{k-1}$ (see \cref{appendix progresive measures in Zk} for relevant examples). 
One can actually recover the proof of \nameref{main theorem kmrr 1} by considering $\sigma$ lifted from $Z_{\infty}$, but for this result the original measures of \cite{kmrr25} give an optimal control. 

Another obstacle is that we cannot obtain our main results from statements about convergence of orbits in dynamical systems. In previous works, the introduction of Erd\H{o}s progressions (see \cite[Definition 2.1]{kmrr25}) was critical for obtaining sumset-type configurations; see, for instance, \cite{ackelsberg2025polynomial_patterns_rationals,ackelsberg_jamnesham2025equidistribution,charamaras_kousek_mountakis_radic2025BBingroups,Charamaras_Mountakis_2025,kousek2025asymmetric,kousek_radic2025BB,Kra_Moreira_Richter_Robertson:2022,kmrr25,Moreira_Richter_Robertson19}. In contrast, the use of Erd\H{o}s progressions could only lead to weaker versions of Theorems \ref{theo-A} and \ref{theo-B}, in particular versions with nested infinite sets, as we explain in \cref{example appendix Erdos progressions}. Instead, expanding on the ideas already present in \cite{hernandez2025infinite}, we construct the sumsets inductively, using properties of the measure $\sigma$. 

For the following discussion and to illustrate the properties of $\sigma$, fix $k= 4$, and denote by $\sigma_4$ the projection on the first $5$ coordinates. We use that the first marginal of $\sigma$ is the Dirac mass of the predetermined point $a\in X$ in \cref{dynamical reform A} and the following:
\begin{enumerate}
    \item \label{intro point 1} An \textit{initialization property}; for an open set $E\subset X$, if $\mu(E)>0$, then  $$\sigma_4(X\times T^{-t}E \times T^{-t}E \times T^{-t}E  \times T^{-t}E)>0 ,$$ for some $t\in \N$ (see \cref{Initialization-step}).
    \item \label{intro point 2} \textit{Left-progressiveness} (see \cref{definition left-right progressive measure}); if $U_1,U_2,U_3\subset X$ are open sets and
    $\sigma_4(X\times U_1\times U_2\times U_3 \times X)>0 ,$ then $$\sigma_4(T^{-n}U_1\times (U_1 \cap T^{-n}U_2)\times (U_2\cap T^{-n}U_3)\times U_3 \times X)>0, $$
    for infinitely many $n\in \N$.
    \item \label{intro point 3} \textit{Right-progressiveness} (see \cref{definition left-right progressive measure}); if $U_1,U_2,U_3\subset X$ are open sets and
    $\sigma_4(X\times U_1\times U_2\times U_3 \times X)>0 ,$ then $$\sigma_4(X\times U_1 \times (U_2\cap T^{-n}U_1)\times (U_3\cap T^{-n}U_2) \cap T^{-n}U_3 )>0, $$
    for infinitely many $n\in \N$.
\end{enumerate}
The first two properties were already exploited in \cite{kmrr25}, where
the second was called \emph{progressiveness}. This notion is not sufficient 
for our purposes, and we refer to it as left-progressiveness, because we 
need to exploit its symmetrical version described in \textbf{\ref{intro point 3}}. 
It is this last property that enables us to enlarge the distance to the first 
coordinate, which ultimately allows us to increase the number of summands in the 
sumsets of \cref{dynamical reform A}.

To put things into perspective, for an open set $E\subset X$ with $\sigma_4(X\times X\times E\times X \times X) >0$,
a first application of left-progressiveness allows us to find $b_1\in \N$ such that 
\begin{equation} \label{eq ex intro 2}
    \sigma_4(X\times T^{-b_1} E \times E \times X \times X)>0,
\end{equation}
and with a second application of the same property, we find $b_2>b_1$ such that $$\sigma_4(T^{-(b_1+b_2)} E\times \left(T^{-b_1} E\cap T^{-b_2} E \right) \times E \times X \times X)>0.$$ 
Since the first marginal of $\sigma$ is $\delta_a$, we get that $T^{b_1+b_2}a\in  E$. Continuing this process, we get a sumset $\{ b_i+b_j : b_i,b_j\in B\text{ distinct} \}$ inside the set of return times $\{n\in \N: T^na\in E\}$ of $a$ to $E$. However, if we apply right-progressiveness beforehand in \eqref{eq ex intro 2}, we find $t\in \N$ such that
$$\sigma_4(X\times T^{-b_1}E\times (E \cap T^{-t - b_1}E)\times T^{-t}E \times X)>0. $$
In particular, $\sigma_4(X\times X\times T^{-b_1} \tilde E \times \tilde E \times X)>0$ where $\tilde{E}=T^{-t}E$ and $b_1 \in \N$ is the same as in \eqref{eq ex intro 2}. Therefore applying left-progressiveness multiple times as before, we find that not only $\{ b_i+b_j : b_i,b_j\in B\text{ distinct} \}$ is in the set $\{n\in \N: T^na\in E\}$, but also  $t+\{ b_i+b_j+b_m : b_i,b_j,b_m\in B\text{ distinct} \}$. 

The previous discussion gives a sense of how to obtain \cref{dynamical reform A} (and therefore \cref{theo-A}) from the initialization property, left-progressiveness, and right-progressiveness. Nevertheless, when it comes to \cref{theo-B}, this method is insufficient. The dynamical reformulation of this theorem is the following. 

\begin{theo}\label{dynamical reform B}
Let $(X,\mu,T)$ be an ergodic measure preserving system with a point $a\in X$ such that $\mu\left(\overline{\{T^na: n\in \N\}}\right)=1$. Let also $E\subset X$ be an open set with $\mu(E)>0$. Then, there exist an infinite set $B \subset \N$ and sequences $ (s_k)_{k\in \N}, (t_k)_{k\in \N}\subset \N$ such that 
\begin{equation} \label{eq rec for double shift equation}
    T^{\left(t_k+i s_k + b_{1}+\cdots + b_{i} \right)}a \in E,
\end{equation}
for all $k\in \N$, all $1\leq i \leq k$, and $b_1, b_2, \ldots, b_i $ distinct elements in $B$.
\end{theo}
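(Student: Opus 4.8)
The plan is to derive \cref{dynamical reform B} from the structural properties of the progressive measure $\sigma\in\cM(X^{\N_0})$ — the same three ingredients used for \cref{dynamical reform A}, namely that the first marginal of $\sigma$ is $\delta_a$, the initialization property, left-progressiveness and right-progressiveness — supplemented by an extra argument that forces the shifts produced along the way into the rigid affine shape $t_k+is_k$. First I would fix, for each $m\in\N$, the finite-dimensional versions of these properties for the marginal $\sigma_m$ of $\sigma$ on the first $m+1$ coordinates, and reduce \eqref{eq rec for double shift equation} to producing an increasing sequence $b_1<b_2<\cdots$ and, for each $k$, numbers $s_k,t_k$ so that every cylinder recording a membership $T^{t_k+is_k+b_{j_1}+\cdots+b_{j_i}}a\in E$ has positive $\sigma_m$-measure for a suitable $m=m(k)$. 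Writing $c_j=b_j+s_k$, the demand attached to a fixed $k$ is exactly the conclusion of Theorem~KMRR for the set $C=B+s_k$ with shift $t_k$; the content of the theorem is that a single $B$ must meet all these demands simultaneously.

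Next I would run the construction. One starts from the initialization property to obtain $t$ and a positive-measure condition in which a long block of consecutive coordinates of $\sigma_m$ equals $T^{-t}E$. Repeated applications of left-progressiveness then peel off the elements $b_n$ one at a time, each larger than the previous, while keeping alive a positive-measure multi-cylinder condition; since left-progressiveness supplies infinitely many admissible shifts at every stage, distinctness of the $b_j$'s is automatic, and it is the multi-cylinder form of the condition — not just a single chain of sets — that forces all $i$-element sums (for the lengths $i$ currently built in) into the return-time set of $a$ to $E$. To pass from the lengths already handled to one more, one invokes right-progressiveness, which transports a translate of $E$ into a later coordinate at the cost of a shift and thereby creates room for an additional summand; iterating and accumulating the shift increments produces length-$i$ sums with total shift equal to the sum of the increments used.

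The hard part, and where \cref{dynamical reform B} goes strictly beyond \cref{dynamical reform A}, is twofold and intertwined: (i) the range of summand-counts is now the unbounded interval $\{1,\dots,k\}$ rather than a window of fixed width $\ell+1$, so the shifts accrued at the successive levels cannot merely be consolidated into one $t_{\ell,k}$ as in \cref{theo-A} but must line up arithmetically across all $1\le i\le k$; and (ii) because $B$ is fixed once and for all, shifting it by $s_k$ for the $k$-th demand also re-addresses the finitely many elements already built, so the condition has to be retrofitted to accommodate them with the new shift. Right-progressiveness only guarantees a positive-measure conclusion for the shift $n$ ranging over an infinite but application-dependent set, so a common increment is not available for free. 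I expect this to be resolved using additional regularity of those admissible-shift sets: since $\sigma$ is lifted from the infinite-step pronilfactor $Z_\infty$, the relevant sets are essentially of (nil-)Bohr type, and one can therefore select, for the fixed $k$, a single increment $s_k$ — growing with $k$ — lying in all of the finitely many admissible sets that occur in its construction, taking $t_k$ to absorb the constant term. Shifting $B$ by $s_k$ is exactly what renders the accumulated shift $is_k$ consistent with the measure at every level at once: it compensates the degree-one (Kronecker, rational-spectrum) part of the obstruction, which is the local obstruction alluded to earlier and the reason a second auxiliary sequence is unavoidable. Finally, one organizes all of this as a single diagonal construction over $k$: after the $k$-th block the elements $b_1<\cdots<b_{N_k}$ and the pair $(s_k,t_k)$ are frozen and the positive-measure condition carried into block $k+1$ refines the previous one, so that the single set $B=\{b_n:n\in\N\}$ together with $(s_k)_{k\in\N},(t_k)_{k\in\N}$ satisfies \eqref{eq rec for double shift equation} for every $k$. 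The alignment in (i)--(ii) — extracting a usable common increment $s_k$ from the varying admissible sets while remaining compatible with the part of $B$ already fixed — is the step I expect to be the main obstacle, and the one requiring input genuinely new relative to \cite{kmrr25}.
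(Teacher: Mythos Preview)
Your proposal identifies the right difficulty but underestimates what is needed to overcome it. The paper does \emph{not} prove \cref{dynamical reform B} from the same three ingredients used for \cref{dynamical reform A}; it introduces a genuinely new property, \emph{multiple right-progressiveness} (\cref{def multiple right progressive}), which in a single application produces shifts already in the arithmetic form $n+m, n+2m,\ldots,n+\ell m$. This is established via a new multiple recurrence theorem for $\tilde T$-invariant measures (\cref{T-Delta-Furstenberg}) together with a seminorm-control lemma for \emph{double} averages (\cref{projection-lemma-for-qmri}) showing that $Z_\infty$ is characteristic for the relevant expression; the paper even exhibits in \cref{subsec ex double average} that the single-average version of this control fails, which is precisely why iterating ordinary right-progressiveness is not enough.

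Your alternative route --- iterate right-progressiveness and then argue that the resulting admissible-shift sets are ``essentially nil-Bohr'' so that a common increment $s_k$ can be extracted --- is where the gap lies. Right-progressiveness, as stated, only gives an infinite (indeed positive-density) set of good $n$'s, and you would need these sets to be syndetic or better (e.g.\ $\mathrm{IP}^*$ or nil-Bohr) \emph{and} to intersect along an arithmetic progression of prescribed length, all while remaining compatible with the portion of $B$ already built. None of this is available from the left/right-progressiveness package, and your appeal to the $Z_\infty$-lift is too vague to supply it. The paper's solution sidesteps this entirely: multiple right-progressiveness hands you the arithmetic structure $t+is$ directly, and the inductive bookkeeping (\cref{topological pronilfactors 1}) then follows the same pattern as for \cref{dynamical reform A}. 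In short, the ``extra argument'' you anticipate is not a post-processing of right-progressiveness outputs but a strengthening of the progressiveness notion itself, and proving that strengthening is the main technical contribution of Section~\ref{sec mult recurrence}.
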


In order to prove \cref{dynamical reform B} we need to use an additional 
property of our measure $\sigma \in \cM(X^{\N_0})$ 
which we term \textit{multiple 
right-progressiveness}. This notion is derived from appropriated double ergodic averages. To prove it, we first show an analog of Furstenberg's multiple recurrence theorem and then that for these related ergodic averages, the infinite-step pronilfactor is characteristic; the details can be found in \cref{sec mult recurrence}.

We refer to \cref{def multiple right progressive} for more details, but when, for instance, $k=4$, multiple right-progressiveness implies that for open sets $U_1,U_2\subset X$, if
$$\sigma_4(X\times U_1\times U_2 \times X \times X)>0 ,$$
then there are $t,s\in \N$ such that
$$\sigma_4(X\times U_1\times (U_2\cap T^{-(t+s)}U_1)\times ( T^{-(t+s)}U_2\cap  T^{-(t+2s)}U_1)\times (  T^{-(t+2s)}U_2 \cap  T^{-(t+3s)}U_1) )>0.$$

This property recovers both right-progressiveness and the initialization property (see \cref{new proof of initialization-step}). Multiple right-progressiveness not only serves to translate sets to the right, but it also allows us to consider an increasing number of patterns with increasing length, by paying the cost of adding an extra shift $s\in \N$, which corresponds to the second shift appearing in \cref{theo-B}. 

\subsection{Notation}\label{notational conventions}
By $\Z$, $\N_0$, $\N$ and $\R$ we denote, respectively, the integers,
nonnegative integers, positive integers (natural numbers), and the reals. 

Given a measure space $(X, \mathcal{B},\mu)$, a measurable space 
$(Y,\mathcal{C})$ and a measurable function $F: X \to Y$, we denote by 
$F\mu$ the push-forward of $\mu$ under $F$. That is, $F\mu(C)=\mu(F^{-1}C)$, 
for any $C\in \mathcal{C}$. Moreover, given a measurable transformation $T:X \to X$ and some integer $n\in \N$, we let $T^n$ denote the composition $T \circ T \circ \cdots \circ T$ taken $n$-times and we sometimes write $T^nf$ instead of $f\circ T^n$, where $f: X\to \C$ is a measurable map. For $k\in \N$ we write $\tilde{T}$ for the map $T\times T^2 \times \cdots \times T^k$, and $T_{\Delta}$ for the map $T^{\otimes k}=T\times T \times \cdots \times T$, where the product is taken $k$-times, omitting the dependence on $k$ which is clear from the context.  

If $S=\emptyset$ we let $T^{-S}E=X$. Also, if $S\subset \N$ is nonempty and $E\subset X$, we denote 
$$T^{-S}E=\bigcap_{n\in S} T^{-n}E. $$
In addition, for a finite set 
$F\subset \N$ we define $F^{\oplus 0}=\{0\}$, and for $i\in \N$ 
$$F^{\oplus i}=\begin{cases}
    \{f_1+\cdots +f_i :f_1,\ldots,f_i\in F \text{ distinct}\} & \text{ if } i\leq |F|\\
    \emptyset & \text{ otherwise}
\end{cases}.$$ 
Notice that, under this convention, $\emptyset^{\oplus 0}=\{0\}$. The motivation for introducing such notation is that if $A \subset \N$, $a \in X$ and $E \subset X$ are as in \cref{FCP} below, then for any nonempty set $S \subset \N$, $a \in T^{-S} E$ if and only if $S \subset A$. 
\subsection*{Acknowledgments}
We are grateful to Bryna Kra, Joel Moreira, Andreas Mountakis and Florian Richter for various helpful comments 
regarding a previous draft of this article. We also thank the organizers of 
Perspectives on Ergodic Theory and its Interactions at IMPAN, Warsaw and of the 
Summer School on Additive Combinatorics, Number Theory and Ergodic theory at 
EPFL, where part of this work was completed. The third author was partially supported by the National Science Foundation grant DMS-2348315.

\section{Negative results and optimality of  the main theorems}\label{counterexamples section}

In \cite{Kra_Moreira_Richter_Robertson_problems}, several possible results 
serving the purpose of a potential analog to Hindman's finite sums theorem 
were conjectured or asked. We begin with a weaker version of \cref{question kmrr intro} stated in the introduction.

\begin{question}[{\cite[Question 2.12]{Kra_Moreira_Richter_Robertson_problems}}] \label{KMRR Question 2.12}  
Let $A\subset \N$ with $\diff^*(A)>0$. Does there exist a nested sequence of infinite sets $B_1 \supset B_2 \supset B_3 \supset \dots$, where $B_{k+1}$ is cofinite in $B_k$ for each $k\in \N$, and a sequence of shifts $(t_k)_{k\in \N}$ such that, for any $k\in \N$ it holds that 
$$\bigg\{\sum_{n\in F}n: F\subset B_k\ \text{with}\ 1\leq |F|\leq k\bigg\}\subset A-t_k\ ?$$
\end{question}

Note that a positive answer to \cref{question kmrr intro} would clearly 
imply a positive answer to \cref{KMRR Question 2.12}. But one could -- ostensibly -- weaken these statements even further and in the setting of \cref{KMRR Question 2.12} simply ask for a nested sequence of infinite sets $B_1 \supset B_2 \supset B_3 \supset \dots$, without demanding they be cofinite in $B_1$. We remark that in fact this last statement is equivalent to that of \cref{KMRR Question 2.12} and a proof of this equivalence is included in \cref{cofinite and nested is the same} below. The next result, which follows by a modification of Straus's example mentioned in the introduction, disproves this statement. 
\begin{prop}\label{1.2 is optimal (ii)}
Given any $\epsilon>0$ there exists a set $A\subset \N$ with $\diff^*(A)>1-\epsilon$ such that for any sequence $(k_i)_{i \in \N}$, for any nested sequence $B_1\supset B_2 \supset \ldots$, of infinite sets, and any sequence $(t_i)_{i\in \N}$, it does not hold that
$$\bigg\{\sum_{n\in F}n: F\subset B_i\ \text{with}\ k_i\leq |F|\leq k_i+i \bigg\}\subset A-t_i,\ \text{for every}\ i\in \N.$$
\end{prop}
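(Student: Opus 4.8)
The plan is to build a modification of Straus's example that is "periodic with a moving modulus." Recall Straus's construction shows: for a prime $p$, if one takes residue classes carefully, the set of $F$-sums of any infinite set eventually hits every residue class mod $p$, so a single shift cannot work. To defeat the more elaborate conclusion here — where the number of summands ranges over the interval $[k_i, k_i+i]$ and we have a nested sequence — I would use a sequence of primes $p_1 < p_2 < \cdots$ and set $A = \bigcap_j A_j$, where $A_j$ is a union of residue classes mod $p_j$ chosen so that $A_j$ omits exactly one residue class mod $p_j$; concretely $A_j = \{n : n \not\equiv r_j \pmod{p_j}\}$ for a suitable $r_j$ (perhaps $r_j = 0$). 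Then $\diff^*(A) = \prod_j (1 - 1/p_j)$, which can be made $> 1 - \epsilon$ by taking the $p_j$ large enough. The key point I want to exploit is that if $B$ is an infinite set, then modulo any fixed prime $p$, by pigeonhole $B$ contains infinitely many elements in some fixed residue class $c \pmod p$; taking $i$ distinct such elements, their sum is $\equiv ic \pmod p$, and as $i$ ranges over a window of length $p$ (or by also varying which residue classes we pick) the sums $\sum_{n\in F} n$ realize every residue mod $p$. So no single shift $t$ can push all these sums into $A_j$, which misses a residue class mod $p_j$.

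The main steps, in order: (1) Fix $\epsilon > 0$, choose primes $p_1 < p_2 < \cdots$ with $\prod_j(1 - 1/p_j) > 1 - \epsilon$, and define $A = \{n \in \N : n \not\equiv 0 \pmod{p_j} \text{ for all } j\}$; verify by the Chinese Remainder Theorem and a direct density computation on long intervals that $\diff^*(A) = \prod_j(1 - 1/p_j) > 1 - \epsilon$. (2) Suppose for contradiction that there is a sequence $(k_i)$, a nested sequence $B_1 \supset B_2 \supset \cdots$ of infinite sets, and shifts $(t_i)$ with the displayed containment for every $i$. (3) Fix an index $i$ large enough that the window length $i+1$ is at least $p_j$ for some chosen $j$ (here I will need $i \geq p_j - 1$, so I pick $j$ first and then look at this particular $i$). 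Since $B_i$ is infinite, among its elements modulo $p_j$ some residue class $c$ is hit infinitely often; pick $k_i + \ell$ distinct elements of $B_i$ lying in class $c$, for each $\ell = 0, 1, \ldots, i$. The corresponding sums are $\equiv (k_i + \ell) c \pmod {p_j}$. (4) If $c \not\equiv 0 \pmod {p_j}$, then as $\ell$ runs over $\{0, 1, \ldots, p_j - 1\} \subset \{0, \ldots, i\}$, the values $(k_i + \ell)c$ run over all residues mod $p_j$, in particular they hit $-t_i \pmod{p_j}$, forcing some sum $+ t_i \equiv 0 \pmod{p_j}$, i.e. that sum lies outside $A \supseteq A - t_i$... wait — more carefully: the containment says the sum lies in $A - t_i$, i.e. $\text{sum} + t_i \in A$, so $\text{sum} + t_i \not\equiv 0 \pmod{p_j}$; but we just produced a sum with $\text{sum} + t_i \equiv 0$, a contradiction. (5) Handle the case $c \equiv 0 \pmod{p_j}$ separately: then every chosen sum is $\equiv 0 \pmod {p_j}$, so $\text{sum} + t_i \equiv t_i \pmod{p_j}$; if $t_i \equiv 0$ we are immediately done, and if $t_i \not\equiv 0$ we instead need a different residue class of $B_i$ to be infinite — so I should first argue that since $B_i$ is infinite it meets a \emph{nonzero} residue class mod $p_j$ infinitely often, \emph{or} it is eventually contained in $p_j\Z$; in the latter sub-case replace $B_i$ by $B_i / p_j$ and reduce to a smaller instance, or more cleanly, observe that if $B_i \subseteq p_j \Z$ then we may rescale the whole problem — but this threatens to loop, so the cleanest fix is to pick, for each $i$, the \emph{smallest} prime $p_j$ among our list such that $B_i$ is not eventually inside $p_j \Z$; such a $j$ exists because $\bigcap_j p_j\Z = \{0\}$ and $B_i$ is infinite, and then run the argument (4) with that $p_j$, provided $i \geq p_j - 1$.

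The main obstacle is exactly this bookkeeping in step (5): the window $[k_i, k_i + i]$ grows with $i$, but to use prime $p_j$ I need $i \geq p_j - 1$, while to escape the "$B_i \subseteq p_j\Z$" degeneracy I may be forced to use a large $p_j$; so I must interleave the choices carefully — the right order is to note that for a \emph{single} infinite set $B$ there is a largest prime in our list (if any) dividing all-but-finitely-many elements of $B$, hence $B$ meets some nonzero class mod $p_j$ infinitely often for \emph{all sufficiently large} $j$ in our list, and then I just take $i$ large enough to accommodate one such $j$. Nestedness is actually not essential to the argument (each $B_i$ is handled on its own), which is consistent with the paper's remark that the nested and non-nested formulations are equivalent; I would mention this but not rely on it. A secondary technical point is the density computation $\diff^*(A) = \prod_j (1-1/p_j)$: upper Banach density of the intersection of the $A_j$ is the limit of densities of the finite intersections $A_1 \cap \cdots \cap A_m$ (which are exactly periodic with density $\prod_{j\le m}(1-1/p_j)$), and one must check the limsup over long intervals genuinely attains this product — standard, via looking at intervals whose length is a multiple of $p_1 \cdots p_m$ and sending $m \to \infty$ after a diagonal argument, so I would state it as a short lemma rather than belabor it.
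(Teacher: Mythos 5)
Your construction differs from the paper's at a crucial point, and the difference produces a genuine gap. You take $A=\{n\in\N:p_j\nmid n\text{ for all }j\}$, removing exactly \emph{one} residue class mod each $p_j$. The paper instead removes $\{0,1,\ldots,n-1\}\pmod{p_n}$, i.e., $n$ residue classes modulo $p_n$; the density price is $\sum_n n/p_n$, which is still controllable, and this extra removal is what makes the $c\equiv 0$ case of your step (4)--(5) go through.

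The gap in your step (5) is not mere bookkeeping. You assert there always exists a prime $p_j$ in the list such that $B_i$ is not eventually contained in $p_j\Z$, justified by ``$\bigcap_j p_j\Z=\{0\}$''. This inference is invalid: ``eventually inside $p_j\Z$'' allows a $j$-dependent threshold. Concretely, take $B=\{p_1!,\,p_2!,\,p_3!,\ldots\}$. This is infinite, yet for every fixed $j$, all but the first $j-1$ elements of $B$ are divisible by $p_j$. So no such $p_j$ exists, there is no ``largest prime (if any) dividing all-but-finitely-many elements of $B$'', and your patch fails outright. With your $A$, when $B_i$ eventually lies in $p_j\Z$ the sums with $|F|=k_i$ are $\equiv 0\pmod{p_j}$, so sum$+t_i\equiv t_i\pmod{p_j}$; unless $t_i\equiv 0\pmod{p_j}$ this is perfectly compatible with membership in $A$, and you get no contradiction. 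The paper's removal of $n$ consecutive classes mod $p_n$ is designed precisely so that whenever $t_i<n$ (which is arranged by choosing $n$ after seeing $t_i$), the shifted sum $t_i+(\text{multiple of }p_n)$ lands in a removed class.

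Your side remark that ``nestedness is actually not essential to the argument'' is also wrong, and in fact \emph{cannot} be right for any correct proof: \nameref{main theorem kmrr 1} applied to your $A$ (which has positive density) produces, for every $k$, sets $B_k$ and shifts $t_k$ with $\big\{\sum_{n\in F}n:F\subset B_k,\,1\le|F|\le k\big\}\subset A-t_k$. Reindexing $B_i:=B_{i+1}$, $t_i:=t_{i+1}$, $k_i:=1$ then satisfies the displayed inclusion for every $i$ without any nesting. So the non-nested version of the statement is \emph{false}, and any proof must use nesting somewhere. The paper uses it to apply the hypothesis with two different indices $i$ and $r$ (chosen so that $t_i<n<p_n<r$) to a common infinite subset $\tilde B_m\subset B_m\subset B_r\subset B_i$: index $i$ handles the pigeonholed class $c=0$ via the constraint $t_i<n$, and index $r$ handles $c\ne 0$ because the window $[k_r,k_r+r]$ has length $r+1>p_n$. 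Your argument tries to use only the single index $i$, which forces $i\ge p_n-1$ where $p_n>n>t_i$; since $t_i$ can be arbitrary this cannot be guaranteed, and that is exactly the regress you flagged but did not resolve.
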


Before proving \cref{1.2 is optimal (ii)}, we list some 
consequences. 
\begin{remark}\label{counterexample 1}
    The set $A\subset \N$ provided by \cref{1.2 is optimal (ii)} is such that:
    \begin{enumerate}
        \item Taking $k_i=1$ for each $i\in \N$, then for any nested sequence $B_1 \supset B_2 \supset B_3 \supset \dots$, of infinite sets and any sequence of shifts $(t_k)_{k\in \N}$, it does not hold that 
$$\bigg\{\sum_{n\in F}n: F\subset B_k\ \text{with}\ 1\leq |F|\leq k \bigg\}
 \subset A-t_k,\ \text{for every}\ k\in \N.$$
In view of the discussion prior to the statement of \cref{1.2 is optimal (ii)}, this answers Questions \ref{question kmrr intro} and \ref{KMRR Question 2.12}. Note that it also shows the necessity of the auxiliary shift-sequence 
$(s_k)_{k\in \N}$ in the statement of \cref{theo-B}.
\item If we let $k_i=i$ for each $i\in \N$, then for any infinite set $B\subset \N$ and any sequence $(t_{k,\ell})_{k,\ell \in \N}$ it does not hold that
$$ \bigg\{\sum_{n\in F}n: F\subset B\ \text{with}\ k\leq |F|\leq k+\ell \bigg\} \subset A-t_{k,\ell},$$
for each $k,\ell \in \N$. This shows that \cref{theo-A} cannot be 
improved in the sense of allowing the infinite set $B$ to be independent of $\ell$. 
    \end{enumerate}
\end{remark}

\begin{proof}[Proof of \cref{1.2 is optimal (ii)}]
Let $\epsilon>0$ be fixed and consider an increasing sequence of prime numbers $(p_n)_{n\in \N}$ such that $\sum_{n=1}^{\infty} n/p_n <\epsilon.$ Then, we define the set 
\begin{equation} \label{eq set counterex}
    A=\N \setminus \bigcup_{n=1}^{\infty} \left( \bigcup_{j=0}^{n-1}p_n\N + j \right).
\end{equation}
By the choice of $(p_n)$ it readily follows that $\diff^*(A)>1-\epsilon$. Now, assume there exists an infinite sequence of sets $B_1 \supset B_2 \supset B_3 \supset \dots$ and a sequence of shifts $(t_i)_{i\in \N}$ such that for each $i\in \N$ it holds that
$$ \bigg\{\sum_{n\in F}n: F\subset B_i\ \text{with}\ k_i\leq |F|\leq k_i+i \bigg\} \subset A-t_i.$$
Notice that, if $r>i$, the fact that $B_{r} \subset B_i$ implies
$$\bigg\{\sum_{n\in F}n: F\subset B_r\ \text{with}\ k_i\leq |F|\leq k_i+i \bigg\} \subset A-t_i.$$
Fix $i,m,n,r\in \N$ with $t_{i}<n<p_n<r<m$ and $i<r<m$ so that $B_m \subset B_{r} \subset B_{i}$.

Assume first that there is an infinite set $\tilde{B}_m\subset B_m \cap  p_n\N$. In particular, $$\bigg\{\sum_{n\in F}n: F\subset \tilde{B}_m\ \text{with}\  |F|= k_i \bigg\} \subset p_n \N.$$  Then, by assumption and the above 
discussion, it must be the case that 
$$\bigg\{\sum_{n\in F}n: F\subset \tilde{B}_m\ \text{with}\  |F|= k_i \bigg\}+t_{i} \subset A.$$
However, $t_{i}<n$ and thus 
$$\bigg\{\sum_{n\in F}n: F\subset \tilde{B}_m\ \text{with}\  |F|= k_i \bigg\}+t_{i}  \subset \bigcup_{j=0}^{n-1} p_n\N + j \subset \N\setminus A,$$
which is a contradiction. 

Therefore, by the pigeonhole principle, we may assume there exists $c_n\in \{1,\ldots,p_n-1\}$ and an infinite set $\tilde{B}_{m} \subset B_m \cap (p_n\N+c_n)$. As before, the preceding discussion implies that
$$\bigg\{\sum_{n\in F}n: F\subset \tilde{B}_m\ \text{with}\ k_r\leq |F|\leq k_r+r \bigg\} \subset A-t_{r}.$$
Now, $p_n$ is a prime and $c_n \not\equiv 0 \pmod {p_n}$, hence $c_n$ is a generator in $\Z_{p_n}$. As $r>p_n$ this means that there exists some $d\in \{k_{r},\ldots,k_{r}+r\}$ such that $dc_n+t_{r} \equiv 0 \pmod{p_n}.$ 
We have thus established that 
$$\bigg\{\sum_{n\in F}n: F\subset \tilde{B}_m\ \text{with}\ |F|=d \bigg\}+t_r \subset A \cap \left(p_n\N + d c_n+t_{r}\right) \subset A \cap p_n\N = \emptyset,$$
which is again a contradiction. 
\end{proof}

\begin{prop}
There exists a set $A\subset\N$ with $\diff^*(A)>0$ such that the statement of \cref{theo-B} fails if either of the sequences $(t_k)_{k\in \N}$ or $(s_k)_{k\in \N}$ is bounded.     
\end{prop}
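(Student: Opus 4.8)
The plan is to show that the set $A$ constructed in the proof of \cref{1.2 is optimal (ii)} already witnesses both failures. Fixing $\epsilon=1/2$, this is the set
\[
A=\N\setminus\bigcup_{n\ge 1}\Big(\bigcup_{j=0}^{n-1}p_n\N+j\Big)
\]
of \eqref{eq set counterex}, which has $\diff^*(A)>1/2>0$. I would suppose, toward a contradiction, that there are an infinite set $B\subset\N$ and sequences $(t_k),(s_k)\subset\N$ realizing the conclusion of \cref{theo-B} for this $A$; unwinding the definitions, this says
\[
t_k+i\,s_k+(b_1+\cdots+b_i)\in A
\]
for every $k\in\N$, every $1\le i\le k$, and all distinct $b_1,\dots,b_i\in B$. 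The two boundedness hypotheses are then treated separately.

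If $(s_k)$ is bounded, I would reduce directly to \cref{1.2 is optimal (ii)}. By the pigeonhole principle there is $s\in\N$ and an infinite $K\subset\N$ with $s_k=s$ for all $k\in K$, so that $C:=B+s$ is a \emph{single} infinite set and, for $k\in K$, the conclusion of \cref{theo-B} reads $\{\sum_{n\in F}n:F\subset C,\ 1\le|F|\le k\}\subset A-t_k$. Given $i\in\N$, pick $k(i)\in K$ with $k(i)>i$; then $\{\sum_{n\in F}n:F\subset C,\ 1\le|F|\le i+1\}\subset A-t_{k(i)}$, and this contradicts \cref{1.2 is optimal (ii)} applied with $k_i:=1$, the (trivially nested) constant sequence $B_i:=C$, and the shifts $t_{k(i)}$. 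The crux here is simply that constancy of $(s_k)$ collapses $B+s_k$ to one infinite set, exactly the configuration \cref{1.2 is optimal (ii)} excludes.

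If $(t_k)$ is bounded the sets $B+s_k$ genuinely move with $k$, so \cref{1.2 is optimal (ii)} is no longer available as a black box and I would instead re-run a Straus-type congruence argument adapted to $A$. Pigeonhole gives $t\in\N$ and an infinite $K\subset\N$ with $t_k=t$ for all $k\in K$. Put $p:=p_{t+1}$, so that $p>t$ and $t$ is among the residues $0,1,\dots,t$ deleted modulo $p$ in the construction of $A$, and pick $k\in K$ with $k\ge p$. By pigeonhole there is a residue $c$ with $\tilde B:=\{b\in B:b\equiv c\pmod p\}$ infinite. If $c+s_k\equiv 0\pmod p$, then for $i=1$ and any $b\in\tilde B$ one has $t+s_k+b\equiv t\pmod p$ with $t+s_k+b>t$, so $t+s_k+b\in p\N+t\subset\N\setminus A$, a contradiction. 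If $c+s_k\not\equiv 0\pmod p$, then $c+s_k$ is invertible modulo the prime $p$, so there is $d\in\{1,\dots,p\}\subset\{1,\dots,k\}$ with $d(c+s_k)\equiv -t\pmod p$; taking distinct $b_1,\dots,b_d\in\tilde B$ gives $t+d\,s_k+(b_1+\cdots+b_d)\equiv t+d(c+s_k)\equiv 0\pmod p$, so this positive number lies in $p\N\subset\N\setminus A$, again a contradiction.

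The step I expect to need the most care is the congruence argument of the last paragraph --- in particular the interplay of its two subcases. When $c+s_k$ is invertible modulo $p$ one must know that the admissible cardinalities $d\le k$ run over all of $\Z_p$ (which is why $k$ is forced to be $\ge p$) so that some $d$ hits the residue $-t$; when $c+s_k\equiv 0$ one must land in a deleted residue, and this works precisely because the prime was taken with index $t+1$, so that $t$ itself is deleted modulo $p_{t+1}$. Everything else, including the density estimate $\diff^*(A)>0$, is inherited unchanged from \cref{1.2 is optimal (ii)}.
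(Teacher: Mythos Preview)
Your proof is correct and follows essentially the same approach as the paper: the same set $A$ from \eqref{eq set counterex} is used, the bounded-$(s_k)$ case is reduced by pigeonhole to a single infinite set and then to \cref{1.2 is optimal (ii)}, and the bounded-$(t_k)$ case is handled by a Straus-type congruence argument modulo a suitable $p_n$. The only cosmetic differences are that the paper bounds $(t_k)$ by its maximum $q$ rather than pigeonholing to a constant $t$, and pigeonholes $B+s_k$ directly rather than $B$; neither affects the substance.
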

\begin{remark*}
The analogous is true for the statement of \cref{theo-A}.  
\end{remark*}

\begin{proof}
Take $A\subset \N$ as in \eqref{eq set counterex}.
Suppose \cref{theo-B} holds with a bounded 
sequence of shifts $(t_k)_{k\in \N}$ and some sequence $(s_k)_{k\in \N}$. Let $n>q=\max\{t_k:k\in \N\}$. Assume that for some $k>p_n$ there exists an infinite set $\tilde{B}+s_k\subset (B+s_k)\cap (p_n\N+d_n)$ for some $d_n\in \{1,\ldots,p_n-1\}$. Then, as $d_n$ is a generator in $Z_{p_n}$ and $k>p_n$ we can find $d\in \{1,\ldots,k\}$ so that $d\cdot d_n +t_k\equiv 0 \pmod{p_n}$. We thus reach a contradiction because
$$ \bigg\{\sum_{n\in F} n : F\subset \tilde{B}+s_k\ \text{with}\ |F|=d\bigg\}+t_k \subset p_n\N+d\cdot d_n+t_k \subset p_n\N \subset \N\setminus A. $$

Hence, for every $k>p_n$, there is an infinite set $\tilde{B}+s_k\subset (B+s_k)\cap p_n\N$. This leads to a contradiction once again, since $\{\sum_{n\in F} n : F\subset \tilde{B}+s_k\ \text{with}\ |F|=k\}+t_k \subset A$ and
$$\bigg\{\sum_{n\in F} n : F\subset \tilde{B}+s_k\ \text{with}\ |F|=k\bigg\}+t_k \subset p_n\N+t_k \subset \bigcup_{j=0}^q p_n\N+j \subset \bigcup_{j=0}^n p_n\N+j \subset \N\setminus A.$$

To see that 
\cref{theo-B} does not in general hold with a finite sequence 
$(s_k)_{k\in \N}$, note that, if it did we would find by the pigeonhole principle an infinite set $B$, an integer $s\in \N$ and sequences $(n_k)_{k\in \N},(t_k)_{k\in \N}$ so that 
$$\bigg\{\sum_{n\in F} n: F\subset B+s\ \text{with}\ 1\leq |F| \leq n_k\bigg\} \subset A-t_k,\ \text{for every}\ k\in \N.$$
This would contradict our negative answer to \cref{question kmrr intro}.    
\end{proof}

We next show that imposing a cofiniteness assumption for nested versions of 
infinite sumsets results, as, for instance, in \cref{KMRR Question 2.12}, is 
superfluous in the following sense. 

\begin{prop}\label{cofinite and nested is the same}
If $A \subset \N$, then the following are equivalent:
\begin{enumerate}[(i)]
    \item \label{nested 1}There exists a nested sequence of infinite sets $B_1 \supset B_2 \supset B_3 \supset \dots$, where $B_{k+1}$ is cofinite in $B_k$, for each $k\in \N$, and a sequence of shifts $(t_k)_{k\in \N}$ such that, for any $k\in \N$ 
\begin{equation} \label{eq nested things}
    \bigg\{\sum_{n\in F}n: F\subset B_k\ \text{with}\ 1\leq |F|\leq k\bigg\}\subset A-t_k\
\end{equation}
\item \label{nested 2}There exists a nested sequence of infinite sets $B_1 \supset B_2 \supset B_3 \supset \dots$, and a sequence of shifts $(t_k)_{k\in \N}$ such that, \eqref{eq nested things} holds for any $k\in \N$. 
\end{enumerate}
\end{prop}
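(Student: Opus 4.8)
The direction $(\ref{nested 1})\Rightarrow(\ref{nested 2})$ is trivial, since a sequence in which each $B_{k+1}$ is cofinite in $B_k$ is in particular a nested sequence. So the whole content is $(\ref{nested 2})\Rightarrow(\ref{nested 1})$: given a nested sequence of infinite sets $B_1\supset B_2\supset\cdots$ and shifts $(t_k)$ satisfying \eqref{eq nested things}, we must produce a new sequence $B_1'\supset B_2'\supset\cdots$ with $B_{k+1}'$ cofinite in $B_k'$ and (possibly new) shifts $(t_k')$ still satisfying \eqref{eq nested things}.

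The plan is to pass to a single ``diagonal'' infinite set and then define the cofinite tower by truncating initial segments. Concretely, I would enumerate the elements of $B_1$ in increasing order and, using that every $B_k$ is infinite and $B_{k+1}\subset B_k$, build inductively an infinite set $B=\{x_1<x_2<x_3<\cdots\}$ together with a strictly increasing sequence of indices so that, for every $k$, the tail $\{x_j : j\ge N_k\}$ is contained in $B_k$; the simplest choice is just to pick $x_k\in B_k$ with $x_k>x_{k-1}$ at each step, so that in fact $\{x_j:j\ge k\}\subset B_k$ for every $k$. Now set
\[
B_k' := \{x_j : j\ge k\}\qquad\text{and}\qquad t_k':=t_k .
\]
Then $B_{k+1}'=B_k'\setminus\{x_k\}$ is cofinite in $B_k'$, the sequence $(B_k')$ is nested with infinite terms, and since $B_k'\subset B_k$, any $F\subset B_k'$ with $1\le|F|\le k$ is a subset of $B_k$, so \eqref{eq nested things} for $(B_k,t_k)$ immediately gives \eqref{eq nested things} for $(B_k',t_k')$. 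This produces exactly the configuration required in $(\ref{nested 1})$.

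The argument is essentially routine; the only point requiring a moment's care is the inductive construction of the diagonal set $B$. One must make sure that the elements $x_k$ can be chosen both increasing and with $x_k\in B_k$: this is where infinitude of each $B_k$ (together with nestedness, so that $x_k\in B_k$ automatically lies in $B_1,\dots,B_k$) is used. I expect no real obstacle here — the subtlety is purely bookkeeping, namely being careful that cofiniteness of $B_{k+1}'$ in $B_k'$ (not merely in $B_1'$) follows from the construction, which it does since removing the single element $x_k$ passes from $B_k'$ to $B_{k+1}'$. No appeal to the density hypothesis on $A$ or to any dynamics is needed; the equivalence is a purely set-theoretic fact about nested families.
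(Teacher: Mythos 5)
Your proof is correct and takes essentially the same approach as the paper: pick a diagonal sequence $x_1 < x_2 < \cdots$ with $x_k \in B_k$, set $B_k' = \{x_j : j \geq k\}$, and observe that $B_k' \subset B_k$ gives \eqref{eq nested things} while $B_k' \setminus B_{k+1}' = \{x_k\}$ gives cofiniteness. (The paper makes the same construction, choosing $b_{i+1}$ as a minimum of $B_{i+1}$ with earlier terms removed, which automatically forces the sequence to be increasing.)
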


\begin{proof}
    It is clear that \eqref{nested 1} implies \eqref{nested 2}. For the converse, suppose $(B_i)_{i\in \N}$ satisfies \eqref{nested 2}. Let $b_1=\min\{b: b \in B_1\}$ and then, let $b_2=\min\{b: b\in B_2\setminus \{b_1\}\}$. Inductively, we can choose $b_{i+1}=\min\{b: b\in B_{i+1}\setminus \{b_1,\ldots,b_i\}\}$. With that we have built a sequence $(b_i)_{i \in \N}$ which is infinite because every set $B_i$ is infinite. For each $i\in \N$, define $C_i = \{b_j \colon j \geq i\}$ and observe that, since $C_i \subset B_i $, the sequence $C_1\supset C_2 \supset C_3 \supset \ldots$ fulfills \eqref{eq nested things}. Finally, by construction, $C_i \backslash C_{i+1} = \{ b_{i+1}\}$, hence $C_{i+1}$ is cofinite in $C_{i}$, concluding the proof.
\end{proof}

\begin{remark}
    The same proof can be performed for other types of sumsets results. In the present work, our ``positive'' results are proven in non-nested versions, that is, the strongest possible. We do not know, however, if  \cref{theo-A} and \cref{theo-B} are equivalent to their respective nested versions. It could be the case, for example, that every element $b \in B_1$ appears only in finitely many $B_i$ which a priori makes the construction of a single infinite set $B$ impossible. 
\end{remark}

We finish this section by showing that infinite analogs of \Erdos\ 
progressions (see \cite[Definition 2.1]{kmrr25}) can be utilized 
to obtain weaker versions of our main results, \cref{theo-A} and 
\cref{theo-B}. The 
sumsets obtained via this method are generated by a nested sequence of infinite sets.

\begin{defn}\label{Infinite EP}
Let $X$ be a compact metric space and $T \colon X \to X$ a homeomorphism. A sequence $(x_k)_{k\in \N_0}$ in $X$ is 
called an infinite \Erdos\ progression if there exists an increasing sequence 
$(c_n)_{n\in \N}$ of natural numbers such that 
$$\lim_{n\to \infty}T^{c_n}x_{k}=x_{k+1},\ \text{for every}\ k\in \N_0.$$
\end{defn}

\begin{prop}\label{example appendix Erdos progressions}
Let $X$ be a compact metric space, $T \colon X \to X$ a homeomorphism and $(U_k)_{n\in \N}$ a sequence of open 
subsets of $X$. Assume there exists an infinite \Erdos\ progression 
$(x_k)_{k\in \N_0}$ such that $x_k\in U_k$ for each $k\in \N$. Then, there    exists a sequence of infinite sets $B_1,B_2,\ldots$ such that 
$B_{k+1} \subset B_k$ and
$$\Big\{ \sum_{n\in F}n \colon F\subset B_k,\ |F|=k\Big\} \subset \Big\{ m \in \N \colon T^mx_0\in U_k\Big\},$$
for every $k\in \N$. 
\end{prop}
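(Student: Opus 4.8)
The plan is to build the nested sequence $B_1 \supset B_2 \supset \dots$ by a diagonal/inductive argument, extracting at stage $k$ a sufficiently sparse infinite subset so that every $k$-fold sum of distinct elements lands in the return-time set of $x_0$ to $U_k$. The key point is that the defining property of an infinite \Erdos\ progression gives us, for the \emph{single} increasing sequence $(c_n)_{n\in\N}$, simultaneous convergence $T^{c_n}x_k \to x_{k+1}$ for \emph{all} $k$ at once; this uniformity across levels is exactly what lets the later sets be subsets of the earlier ones.

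\begin{proof}
Fix, for each $k \in \N$, an open set $V_k$ with $x_k \in V_k \subset \overline{V_k} \subset U_k$ (possible since $X$ is metric, hence regular). We construct inductively a decreasing sequence of infinite sets $B_1 \supset B_2 \supset \cdots$ together with an auxiliary increasing sequence; the invariant we maintain at the end of stage $k$ is:
\begin{equation*}
    \text{if } n_1 < n_2 < \cdots < n_i \text{ lie in } B_k \text{ and } i \leq k, \text{ then } T^{c_{n_1} + c_{n_2} + \cdots + c_{n_i}} x_0 \in V_{i}.
\end{equation*}
We will take $B$-elements to be \emph{indices} $n$, so that the set in the statement is obtained at the very end as $\{c_n : n \in B_k\}$; alternatively one rewrites everything directly in terms of the $c_n$'s.

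For the base case $k=1$: since $T^{c_n} x_0 \to x_1 \in V_1$, there is $N_1$ with $T^{c_n}x_0 \in V_1$ for all $n \geq N_1$; set $B_1 = \{n : n \geq N_1\}$. For the inductive step, suppose $B_k$ has been constructed with the invariant above. Enumerate $B_k = \{m_1 < m_2 < \cdots\}$. We thin it out to $B_{k+1} = \{m_{j_1} < m_{j_2} < \cdots\}$ as follows. Put $j_1 = 1, \dots, j_{k} = k$ (keep the first $k$ elements; there is nothing to check since every sum of $\leq k$ of them already works, and sums of exactly $k+1$ distinct elements only arise once we have at least $k+1$ elements). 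Having chosen $m_{j_1} < \cdots < m_{j_r}$ with $r \geq k$, consider the finitely many sums $S = c_{m_{j_{a_1}}} + \cdots + c_{m_{j_{a_k}}}$ over $k$-subsets $\{a_1 < \cdots < a_k\} \subset \{1,\dots,r\}$. For each such $S$ we have $T^{S} x_0 \in V_k$ by the invariant at level $k$ (applied with $i=k$), hence by continuity of each power of $T$ and the convergence $T^{c_n} x_k \to x_{k+1} \in V_{k+1}$, there is a threshold $M(S)$ so that $T^{S + c_n}(x_0) \in V_{k+1}$ for all $n \geq M(S)$; more precisely, $T^{c_n} x_k \to x_{k+1}$ gives $T^{c_n}(T^{S_0} x_0) \to T^{S_0} x_k$ only if $T^{S_0}x_0 = x_k$, which is \emph{not} in general true — so instead one argues: $S$ arises as a partial sum of $c$'s; by the invariant the orbit point $T^{S}x_0$ lies in $V_k$, and we need $T^{c_n} T^{S} x_0 \to$ something in $V_{k+1}$. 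This requires $T^{S}x_0 = x_k$, which forces us to set up the invariant differently.
\end{proof}

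Let me reconsider and restate the plan more carefully, since the naive invariant does not close.

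\textbf{Corrected plan.} The right invariant tracks not just membership of $x_0$'s orbit in $V_k$, but that the relevant orbit point \emph{equals} $x_k$ (in the limit). That is, we should arrange that for $n_1 < \cdots < n_i$ in $B_i$, the point $T^{c_{n_1}+\cdots+c_{n_i}}x_0$ is close to $x_i$; then one more application of $T^{c_n}$ with large $n$ pushes it close to $x_{i+1}$. Concretely: maintain that for any $i \le k$ and any $n_1 < \cdots < n_i$ from $B_k$ with $n_1$ large relative to the position, $d(T^{c_{n_1}+\cdots+c_{n_i}}x_0,\ x_i) < \varepsilon_i$, where $(\varepsilon_i)$ is a pre-fixed sequence small enough that the $\varepsilon_i$-ball around $x_i$ is inside $V_i$, and moreover so that $T^{c_n}$ maps the $\varepsilon_i$-ball around $x_i$ into the $\varepsilon_{i+1}$-ball around $x_{i+1}$ once $n$ is large (using uniform continuity of $T^{c_n}$? no — $c_n$ varies). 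The clean way: since we only ever apply \emph{one} more factor $T^{c_n}$ and $T^{c_n}x_i \to x_{i+1}$, for each fixed sum $S$ with $T^S x_0$ within $\varepsilon_i$ of $x_i$, continuity of the single map $z \mapsto T^{c_n}z$ at $z = x_i$ is not enough because $c_n$ changes with $n$; but we do not need a uniform statement — at each finite stage only finitely many sums $S$ are relevant, and for each we pick $n$ beyond a threshold making $T^{S+c_n}x_0$ close to $x_{i+1}$, using that $T^{c_n}x_0 \to x_1$, $T^{c_n}x_1\to x_2$, etc., \emph{composed appropriately}: note $T^{S+c_n}x_0 = T^{c_n}(T^S x_0)$ and $T^S x_0 \to x_i$ is false for fixed $S$ — $S$ is fixed! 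So $T^S x_0$ is a fixed point near $x_i$, and $T^{c_n}(T^S x_0)$ need not converge at all.

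This is the genuine obstacle, and the resolution is: \textbf{do not fix $S$; instead build $B$ so that the orbit points literally converge to the $x_i$.} Use a diagonal argument where $B = \{n_1 < n_2 < \cdots\}$ is chosen so that $T^{c_{n_j}}x_0 \to x_1$ \emph{and} for every fixed finite tuple from $B$, appending the next element drives the partial sum orbit toward the next $x_i$; one sets this up by choosing $n_{j+1}$ so large that $T^{c_{n_{j+1}}}$ sends \emph{each} of the finitely many already-constructed partial-sum orbit points (each within $\varepsilon_i$ of the corresponding $x_i$ by induction) to within $\varepsilon_{i+1}$ of $x_{i+1}$ — and here we invoke that $T^{c_n}x_i \to x_{i+1}$ for each $i$, hence $T^{c_n}$ sends any point sufficiently close to $x_i$ to somewhere close to $x_{i+1}$ provided we simultaneously shrink: formally, pick $n_{j+1}$ with $d(T^{c_{n_{j+1}}}x_i, x_{i+1}) < \varepsilon_{i+1}/2$ for all $i \le j$, and use uniform continuity of the (finitely many!) maps $T, T^2, \dots$ — no.

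Given the genuine subtlety here, the honest summary of the approach is: \emph{the main obstacle is precisely that composing a varying power $T^{c_n}$ after a fixed sum does not converge, so the induction must be organized as a fully diagonal construction in which the partial-sum orbit points are forced to converge to the $x_i$ as the indices grow, by choosing each new index large enough to simultaneously correct all previously-built partial sums at every level $\le k$.} This diagonalization is routine once set up but is the crux; everything else (nesting, the pre-chosen shrinking $(\varepsilon_i)$ with $B(x_i,\varepsilon_i)\subset U_i$) is bookkeeping.
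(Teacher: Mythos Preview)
You correctly identify the real obstacle: writing $T^{S+c_n}x_0 = T^{c_n}(T^S x_0)$ with $S$ fixed and $n\to\infty$ gives no control, because $T^{c_n}$ varies. But you never resolve it, and your closing ``the diagonalization is routine once set up'' is not supported by anything you have written; every concrete attempt in your draft runs into exactly this wall. The fix is to reverse the order of composition: write $T^{S+c_n}x_0 = T^{S}(T^{c_n}x_0)$, so that the \emph{new} element $c_n$ is applied \emph{innermost}. Then $T^{c_n}x_0 \to x_1$, and $T^S$ is a \emph{fixed} continuous map, so $T^S(T^{c_n}x_0) \to T^S x_1$. For this limit to lie in $U_i$ you need to know that $T^S x_1 \in U_i$ when $S$ is a sum of $i-1$ previously chosen $c$'s; this forces you to strengthen the invariant to track, for every $0\le p\le k$, that $i$-fold sums applied to $x_p$ land in $U_{p+i}$ (here the full Erd\H{o}s-progression hypothesis $T^{c_n}x_p\to x_{p+1}$ for \emph{all} $p$ is what makes the induction close). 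Equivalently, one can phrase this as shrinking the target open sets at each stage by intersecting with preimages under the already-chosen fixed homeomorphisms $T^{c_{n_1}},\dots,T^{c_{n_j}}$; either way the point is that the fixed maps go outside and the new one goes inside.

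The paper's own proof sidesteps this by citing \cite[Lemma~2.2]{kmrr25} as a black box for the extraction at each level $k$, and then simply observes that the extracted $B_k$ is still a subsequence of $(c_n)$, so the Erd\H{o}s-progression property persists and the lemma can be applied again inside $B_k$ to produce $B_{k+1}\subset B_k$. Your from-scratch approach is more self-contained but, as written, has the gap above.
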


\begin{proof}
Let $(c_n)_{n\in \N}$ be a sequence such that the convergence in \cref{Infinite EP} is satisfied. Fixing $k\in \N$, the proof of \cite[Lemma 2.2]{kmrr25} gives an infinite set $B_k \subset \{c_n: n\in \N\}$ such that 
$$\bigg\{ \sum_{n\in F}n : F\subset B_k,\ |F|=k\bigg\} \subset \bigg\{m \in \N \colon T^mx_1\in U_k\bigg\}.$$
Then, writing $B_k=\{b_{k,1}<b_{k,2}<\cdots\}$, since $(b_{k,n})_{n\in \N}$ is 
a subsequence of $(c_n)_{n\in \N}$ we have 
$$\lim_{n\to \infty}T^{b_{k,n}}x_{m}=x_{m+1},\ \text{for every}\ m\in \N.$$
Another application of \cite[Lemma 2.5]{kmrr25} then gives an infinite set $B_{k+1} \subset \{b_{k,n}:n\in \N\}$ so that 
$$\bigg\{ \sum_{n\in F}n : F\subset B_{k+1},\ |F|=k+1\bigg\} \subset \bigg\{m \in \N \colon T^mx_1\in U_{k+1}\bigg\}.$$
Continuing inductively we obtain the result. In fact, the proof 
of \cref{cofinite and nested is the same} shows that we can actually 
choose the sets $B_1,B_2,\ldots,$ so that $B_k\setminus B_{k+1}$ is a finite set 
for each $k\in \N$. 
\end{proof}

\section{Dynamical interpretation} \label{reduction to dynamics and outline of proofs}

\subsection{Classical terminology}\label{Terminology}

If $X$ is a compact metric space and $T:X \to X$ is a homeomorphism, we call
$(X,T)$ a \emph{topological system}. In this context, if $\mu$ is a Borel 
probability measure that is invariant under $T$, i.e., $T\mu=\mu$, 
we call $(X,\mu,T)$ a \emph{measure preserving system} or a 
\emph{dynamical system} or simply a \emph{system}. A system is 
\emph{ergodic} if $T^{-1}A=A$ implies that  $\mu(A)\in \{0,1\},$ for any Borel set $A\subset X$.

A \Folner\ sequence $\Phi=(\Phi_N)_{N\in \N}$ in $\N$ is a sequence of nonempty finite subsets of $\N$ which are asymptotically invariant under any shift, that is, 
$$\lim_{N\to \infty} \frac{|\Phi_N \cap (\Phi_N+t)|}{|\Phi_N|}=1,$$
for every $t\in \N$. Observe that a set $A\subset \N$ has positive upper Banach density if and only if there is a \Folner\ sequence $\Phi=(\Phi_N)_{N\in \N}$ such that 
$$ \lim_{N\to \infty} \frac{|A\cap \Phi_N|}{|\Phi_N|}>0.$$
Given a measure preserving system $(X,\mu,T)$, and a \Folner\ sequence 
$\Phi=(\Phi_N)_{N\in \N}$ in $\N$, we say that a point $a\in X$ is generic 
for $\mu$ along the \Folner\ sequence $\Phi$, and we write $a\in \gen(\mu,\Phi)$, if 
$$\mu=\lim_{N\to \infty} \frac{1}{|\Phi_N|} \sum_{n\in \Phi_N} \delta_{T^na}, $$
where the limit is in the weak* topology and by $\delta_x$ we denote the Dirac mass at $x\in X$. 

Let $(X, \mu,T)$ and $(Y,\nu,S)$ be measure preserving systems. We say that $(Y, \nu,S)$ (or simply $Y$) is a \emph{factor} of $(X, \mu,T)$ (or simply $X$) if there exists a measurable map $\pi \colon X \to Y$, called a \emph{factor map}, such that $\pi\mu = \nu$ and $\pi \circ T = S \circ \pi$, up to null-sets. In this case $X$ is called an \emph{extension} of $Y$. 
If, in addition, the function $\pi \colon X \to Y$ is continuous and 
surjective, then it is called a \emph{continuous factor map}. In an abuse of notation we often use the same letter for the transformation of a factor and its extension, that is, we write $(Y,\nu,T)$. Given a factor map $\pi: X\to Y$ and a function $f\in L^2(\mu)$, 
we let $\mathbb{E}(f | Y)$ denote the conditional expectation 
$\mathbb{E}(f | \pi^{-1}\mathcal{B}(Y))$, where $\mathcal{B}(Y)$ is the Borel 
$\sigma$-algebra on $Y$.

Let $(X_j,\mu_j,T_j)_{j\in \N}$ be a sequence of measure preserving systems endowed with factor maps $\pi_{j,j+1}: X_{j+1}\to X_j$ for each $j\in \N$. An \emph{inverse limit} of this sequence of systems is the unique system $(X,\mu,T)$ (up to isomorphism) endowed with the factor maps $\pi_j:X\to X_j$, $j\in \N$ such that
\begin{enumerate}[(i)]
    \item \label{point inverse 1}  $\pi_{j+1}=\pi_{j,j+1} \circ \pi_j$, for every $j\in \N$ and 
    \item For $1\leq p <\infty$, $\bigcup_{i\in \N} \{f \circ \pi_i : f\in L^p(\mu_i)\}$ is dense in $L^p(\mu)$.
\end{enumerate}
A similar definition applies for topological dynamical systems (replacing factor maps by continuous factor maps and integrable functions by continuous ones). For existence and alternative definitions of inverse limits we refer the reader to \cite[Chapter 6.2]{Host_Kra_nilpotent_structures_ergodic_theory:2018}.

\subsection{Translation to dynamics}\label{translation to dynamics and some reductions}

The proofs of our main combinatorial 
results 
begin by translating them to statements in ergodic theory. A fundamental step in order to facilitate 
this reformulation is a version of  Furstenberg's correspondence 
principle introduced in \cite{Furstenberg77}. 
For a proof of this version, we refer the reader to 
\cite{Kra_Moreira_Richter_Robertson:2022}.

\begin{prop}\label{FCP}
Let $A\subset \N$ be a set of with $\diff^*(A)>0$. Then, there 
exists an ergodic system $(X,\mu,T)$, a point $a\in X$ with 
$\mu\left(\overline{\{T^na: n\in \N\}}\right)=1$ and a clopen set 
$E\subset X$ such that $A=\{n\in \N: T^na\in E\}$ and $\mu(E)>0$.
\end{prop}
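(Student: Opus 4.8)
The plan is to build the system directly from the indicator sequence of $A$, using the orbit closure of that sequence in the full shift on $\{0,1\}$. First I would fix, by the characterization of positive upper Banach density recalled in \cref{Terminology}, a \Folner\ sequence $\Phi = (\Phi_N)_{N\in \N}$ in $\N$ along which $|A\cap \Phi_N|/|\Phi_N|$ converges to some $\delta = \diff^*(A) > 0$. Let $\omega = \one_A \in \{0,1\}^{\Z}$ (extending $A$ by $0$ on the nonpositive integers, say), let $S$ be the left shift on $\{0,1\}^{\Z}$, and let $Y = \overline{\{S^n\omega : n\in \N\}}$ be the orbit closure. This $Y$ is a compact metric space and $S|_Y$ is a homeomorphism, so $(Y,S)$ is a topological system; the cylinder set $E_0 = \{y\in Y : y(0) = 1\}$ is clopen, and by construction $n\in A \iff S^n\omega \in E_0$.

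Next I would produce an invariant measure. Pass to a subsequence of $(\Phi_N)$ along which the empirical measures $\frac{1}{|\Phi_N|}\sum_{n\in\Phi_N}\delta_{S^n\omega}$ converge weak* to a Borel probability measure $\nu$ on $Y$; this is possible by weak* compactness, and the \Folner\ property forces $\nu$ to be $S$-invariant (the standard telescoping estimate). Testing against $\one_{E_0}$ (continuous, since $E_0$ is clopen) gives $\nu(E_0) = \lim |A\cap\Phi_N|/|\Phi_N| = \delta > 0$, and $\omega \in \gen(\nu,\Phi')$ for the chosen subsequence $\Phi'$. There are two remaining issues to fix: (a) $\nu$ need not be ergodic, and (b) we need the orbit closure of $\omega$ itself (not of some generic point) to be conull, with $\omega$ generic. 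For (a), use the ergodic decomposition of $\nu$: since $\nu(E_0) = \delta$, at least one ergodic component $\mu_0$ has $\mu_0(E_0) \geq \delta > 0$. But this changes the base point, so I would instead argue more carefully — this is where the main work lies.

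The main obstacle is exactly the passage to an \emph{ergodic} system while retaining a \emph{single} point $a$ that is both generic and has conull orbit closure. The clean way is a two-step argument. Step one: by the ergodic decomposition applied to $\nu$ along $\Phi'$, and a pigeonhole/measurable-selection argument (as in the proof in \cite{Kra_Moreira_Richter_Robertson:2022}), one shows that a positive-$\nu$-measure set of points $y\in Y$ are themselves generic along some \Folner\ sequence for an ergodic measure $\mu_y$ with $\mu_y$-measure of the relevant cylinder at least $\delta$; among these pick one, call it $y_0$, and set $a = y_0$, $X = \overline{\{S^n a : n \in \N\}}$, $\mu = \mu_{y_0}$, $T = S|_X$, $E = E_0 \cap X$. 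Ergodicity of $\mu$ and $\mu(E) > 0$ hold by choice; $\mu(X) = 1$ because $a$ is generic for $\mu$ and hence $\mu$ is supported on the orbit closure $X$ of $a$; and the defining property $A' = \{n : T^n a \in E\}$ is now the indicator pattern not of $A$ but of the point $y_0$. Step two: to recover the \emph{original} set $A$, one observes that it suffices to prove the combinatorial theorems for \emph{some} set $A'$ of positive density whose pattern is realized as a return-time set — but actually the statement as phrased wants $A$ itself. The standard resolution, which I would follow, is that $a = \omega$ \emph{is} generic for $\nu$ along $\Phi'$ by construction, so one does \emph{not} decompose; instead one works with the (possibly non-ergodic) $\nu$ and then invokes the ergodic decomposition only at the level of the later arguments, OR one cites that in the relevant references the correspondence principle is stated and proved with $a = \one_A$ directly and ergodicity obtained by an additional genericity-transfer lemma. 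Since the excerpt explicitly says ``For a proof of this version, we refer the reader to \cite{Kra_Moreira_Richter_Robertson:2022},'' I would simply assemble the orbit-closure construction above, verify invariance and the measure bound via the \Folner\ estimate, note $E$ is clopen and $A = \{n : T^n a\in E\}$ by construction, and cite \cite{Kra_Moreira_Richter_Robertson:2022} for the refinement that the measure may be taken ergodic while keeping $a$ generic and the orbit closure conull — the genuinely delicate point being that ergodic decomposition a priori destroys genericity of the fixed point, which is handled there by a Lebesgue-density / pigeonhole argument on the components.
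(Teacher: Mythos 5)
The paper gives no proof of \cref{FCP}, deferring to \cite{Kra_Moreira_Richter_Robertson:2022}; your plan ultimately defers there too, and the symbolic setup (orbit closure of $\one_A$, cylinder $E_0$, weak* limit $\nu$ of empirical measures along a F\o lner sequence of intervals) is the right scaffolding. However, there is a conceptual slip that makes your intermediate discussion more convoluted than it needs to be. You claim that passing to an ergodic component ``changes the base point'' and therefore hedge between several resolutions — but it does not change the base point. One keeps $a=\one_A$ fixed and simply replaces $\nu$ by an ergodic component $\mu$ with $\mu(E_0)>0$; such a component exists because $\nu(E_0)>0$ and $\nu(E_0)=\int\mu_y(E_0)\,d\nu(y)$. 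The only remaining item is $\mu\bigl(\overline{\{S^n a : n\in\N\}}\bigr)=1$, and this comes for free: the empirical measures defining $\nu$ are concentrated on the forward orbit closure $X$ of $a$, which is closed, so $\nu(X)=1$, hence $\mu_y(X)=1$ for $\nu$-a.e.\ $y$; intersecting with the positive-$\nu$-measure set of $y$ with $\mu_y(E_0)>0$ gives a valid $\mu$. The detour through a new base point $y_0$ (your ``Step one'') is therefore unnecessary and, as you yourself observe, actively harmful since it replaces $A$ by a different set.

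Two further remarks. First, genericity of $a$ is not part of the conclusion of \cref{FCP}, so the ``genuinely delicate point'' you flag — that ergodic decomposition destroys genericity — is not an obstruction to the statement as phrased. It is relevant for the stronger form in \cite{Kra_Moreira_Richter_Robertson:2022}, where $a\in\gen(\mu,\Phi)$ appears in the conclusion and is used downstream (e.g.\ in the deduction of \cref{dynamical reform B} from \cref{topological pronilfactors 1}); but even that can be recovered a posteriori from $\mu\bigl(\overline{\{S^n a:n\in\N\}}\bigr)=1$ and ergodicity by a short diagonalization (pick a $\mu$-generic point $y_0$ in the conull forward orbit closure, approximate it by $S^{m_k}a$, and generify $a$ along shifted intervals). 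Second, a small technicality: the forward orbit closure $Y=\overline{\{S^n\omega:n\in\N\}}$ inside the two-sided shift is forward-invariant but need not be backward-invariant, so $S|_Y$ is not automatically a homeomorphism as you assert; one should carry out the ergodic decomposition in the two-sided orbit closure (or the full shift), with $\mu(X)=1$ for the forward orbit closure $X$ obtained as above.
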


We show the deduction of our main combinatorial results, Theorems 
\ref{theo-A} and \ref{theo-B}, from the dynamical theorems mentioned in the introduction.

\begin{proof}[Proof that Theorem \ref{dynamical reform B} implies Theorem \ref{theo-B}]
Let $A\subset\N$ be a set with $\diff^*(A)>0$. By the correspondence principle (\cref{FCP}) we find an ergodic system $(X,\mu,T)$, a point $a\in X$ with 
$\mu\left(\overline{\{T^na: n\in \N\}}\right)=1$ and a clopen set 
$E\subset X$ such that $A=\{n\in \N: T^na\in E\}$ and $\mu(E)>0$. Theorem \ref{dynamical reform B} then guarantees the existence of an infinite set $B\subset \N$ and sequences $(s_k)_{k\in \N}, (t_k)_{k\in \N}\subset \N$ such that 
$$T^{\left(t_k+i s_k + b_1+\cdots + b_i \right)}a \in E,$$
for every $k\in \N$ and for every $1\leq i \leq k$, and $b_1,b_2,\ldots,b_i$ distinct elements of $B$. Since $A=\{n\in \N: T^na\in E\}$ we have that 
$$b_1+\cdots + b_i+is_k \in A-t_k,$$
for every $k\in \N$ and for every $1\leq i \leq k$, and $b_1,b_2,\ldots,b_i$ distinct elements of $B$. This translates to 
 $$ \bigg\{ \sum_{n\in F} n \colon \ F\subset B+s_k \text{ with } 1\leq |F|\leq k \bigg\} \subset A-t_k ,\ \text{for every}\ k\in \N,$$
which is precisely the statement of \cref{theo-B}.
\end{proof}

The proof that Theorem \ref{dynamical reform A} implies Theorem 
\ref{theo-A} is analogous to the previous and as such we omit it. 
Let us also show that \cref{theo-A}, for example, implies 
\cref{coro-C}. 

\begin{proof}[Proof that \cref{theo-A} implies \cref{coro-C}]
    Let $B \subset \N$ and $(t_k)_{k\in \N}\subset \N$ be the infinite set and the sequence obtained by \cref{theo-A} for $\ell=0$, so that 
    \begin{equation} \label{eq theo with l=0}
        \left \{\sum_{b \in F} b : F \subset B, |F| =k \right\}\subset A-t_k,\ \text{for every}\ k\in \N.
    \end{equation}
    Let $B=\{b_k:k\in \N\}$ be an enumeration of the set $B$, let $\{p_k: k\in \N\}$ be an enumeration of the primes and for each $k\in \N$ define $B_k= \{b_{p_k^n} + t_{k}-t_{k-1} \colon k \in \N\}$, where $t_0=0$. By \eqref{eq theo with l=0} we have that
    \begin{equation*}
        \sum_{i=1}^k (b_{p_i^{n_i}} + t_{i}-t_{i-1}) = \Big( \sum_{i=1}^k b_{p_i^{n_i}}  \Big) + t_k \subset A,
    \end{equation*}
    for every $k\in \N$ and any $n_1,\ldots,n_k\in \N$. By definition, this implies that $B_1 + B_2 + \cdots + B_k\subset A,$ for every $k\in \N$, concluding the proof.
\end{proof}

We saw that the dynamical results, Theorems \ref{dynamical reform A} and \ref{dynamical reform B}, imply respectively Theorems \ref{theo-A} and \ref{theo-B} -- and hence, also, \cref{coro-C}. Their proofs are then the main focus of the rest of this paper. 

\section{Progressive measures and infinite-step nilsystems} \label{section measures and nilsystems}

\subsection{Pronilfactors}

As in \cite{Kra_Moreira_Richter_Robertson:2022} and \cite{kmrr25}, 
(also \cite{host2019short} and 
\cite{Kra_Moreira_Richter_Robertson:2023} for specific use of the Kronecker factor)
we use measures on pronilfactors to generate the infinite sumsets. 
A $k$-step nilsystem $(X,\mu,T) = (G/\Gamma,\mu,T)$ is a dynamical system given by a $k$-step nilpotent Lie group $G$, with $\Gamma$ being a co-compact discrete subgroup, $T \colon X \to X$ denoting the left translation by a fixed element of $G$ and $\mu$ being the Haar measure of $X$. For the $k$-step nilsystem $(X,\mu,T)$ the properties of ergodicity, minimality, transitivity and unique ergodicity are equivalent. Moreover, if $x \in X$ is any point, the orbit closure $\overline {\{ T^n x \colon n \in \Z \}} $ is a minimal and uniquely ergodic $k$-step subnilsystem. An inverse limit (in the measure theoretic sense) of $k$-step nilsystems is called a $k$-step pronilsystem. 
All the above properties are preserved under inverse limits of $k$-step nilsystems, so they also hold for $k$-step pronilsystems. For these classical facts and other details we refer the reader to \cite[Chapter 11 and Chapter 13]{Host_Kra_nilpotent_structures_ergodic_theory:2018}.  

\begin{defn}
A system $(Z,m,T)$ is called an \emph{infinite-step pronilsystem} if it is 
an inverse limit of a sequence $\left((Z_k, m_k,T)\right)_{k\in \N}$, where each $(Z_k, m_k, T)$ is a $k$-step pronilsystem. 
\end{defn}
Infinite-step pronilsystems share many properties with finite-step pronilsystems; some of those, which are exploited in this paper, are listed in the end of this subsection. 

Let $(X, \mu,T)$ be an ergodic measure preserving system. Then, for every $k \in \N$, $(X, \mu,T)$ has a maximal factor isomorphic to a $k$-step pronilsystem, called the $k$\emph{-step pronilfactor} (see \cite[Chapter 16]{Host_Kra_nilpotent_structures_ergodic_theory:2018}). We shall denote this factor by $(Z_k,m_k,T)$. The inverse limit of $(Z_k,m_k,T)$ is the \emph{infinite-step pronilfactor of} $X$ and we denote it by $(Z_{\infty}, m,T)$. The infinite-step pronilfactor of $X$ coincides with the maximal factor of $X$ isomorphic to an infinite-step pronilsystem. In their seminal paper \cite{Host_Kra_nonconventional_averages_nilmanifolds:2005}, Host and Kra  showed that it is possible to describe these factors by studying uniformity seminorms, classically referred to as Gowers-Host-Kra seminorms.  

\begin{defn}
    Let $(X, \mu,T)$ be an ergodic system and $f \in L^{\infty}(\mu)$. The $k$-uniformity seminorms of $f$, $\norm{f}_{U^{k}(X,\mu,T)}$, are defined inductively as follows: 
    \begin{equation*}
        \norm{f}_{U^0(X, \mu,T)} =  \int f d\mu 
    \end{equation*}
    and 
    \begin{equation} \label{eq def seminorm}
        \norm{ f}_{U^{k+1}(X, \mu,T)}^{2^{k+1}}= \lim_{H \to \infty }\frac{1}{H} \sum_{h \leq H} \norm{f \cdot \overline{T^hf}}_{U^k(X, \mu,T)}^{2^k}.
    \end{equation}
\end{defn}

\begin{remark*}
    The function $\norm{ \cdot }_{U^{k}(X, \mu,T)}$ defines a seminorm when $k \geq 1$. The fact that the limit in \eqref{eq def seminorm} exists is proved in \cite{Host_Kra_nonconventional_averages_nilmanifolds:2005} by iterative applications of the mean ergodic theorem. In the same paper, it is also shown that $\norm{ \cdot }_{U^{k+1}(X, \mu,T)}$ is actually a norm for $k$-step pronilsystems. 
\end{remark*}

The main theorem in \cite{Host_Kra_nonconventional_averages_nilmanifolds:2005} shows that the $k$-step pronilfactor, $Z_{k}$, of an ergodic system $(X, \mu,T)$ is characterized by the property 
    \begin{equation*} 
        \norm{f}_{U^{k+1}(X, \mu,T)} = 0 \iff \E(f \mid Z_{k}) =0,
    \end{equation*}
    for any $f\in L^{\infty}(\mu)$. Similarly, for $Z_{\infty}$ we have the following characterization.

    \begin{prop} \label{infinite step nilfactor and HK seminorms}
    Let $(X,\mu,T)$ be an ergodic system and $(Z_{\infty},m,T)$ denote its infinite-step pronilfactor. For any $f \in L^{\infty}(\mu)$ we have that 
        \begin{equation*}
            \lVert f - \E(f \mid Z_{\infty}) \rVert_{U^k(X,\mu,T)} = 0 \quad \text{ for all } k \in \N.
        \end{equation*}
    Conversely, if $h\in L^{\infty}(\mu)$ is such that $\norm{h}_{U^k(X,\mu,T)}=0$ for all $k\in \N$, then $\mathbb{E}(h | Z_{\infty})=0$. 
    \end{prop}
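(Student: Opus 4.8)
The plan is to reduce everything to the known Host--Kra characterization of the finite-step pronilfactors $Z_k$, using the fact that $Z_\infty$ is the inverse limit of the $(Z_k)_{k\in\N}$ together with the monotonicity of the uniformity seminorms. The two implications are essentially formal consequences of the finite-step theory, and the only work is bookkeeping with conditional expectations and limits.

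For the first implication, fix $f\in L^\infty(\mu)$ and write $g=f-\E(f\mid Z_\infty)$. First I would record the nesting $Z_k \subset Z_{k+1}$ (as factors, since each $Z_k$ is a $k$-step pronilfactor, hence also a $(k{+}1)$-step pronilfactor, and maximality gives the inclusion), which yields the monotonicity $\norm{\cdot}_{U^k}\le \norm{\cdot}_{U^{k+1}}$ of the seminorms — this is the standard inductive consequence of the defining recursion \eqref{eq def seminorm}. Now fix $k\in\N$. Since $Z_k$ is a factor of $Z_\infty$, the tower property of conditional expectation gives $\E(g\mid Z_k)=\E(f\mid Z_k)-\E\big(\E(f\mid Z_\infty)\mid Z_k\big)=\E(f\mid Z_k)-\E(f\mid Z_k)=0$. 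By the Host--Kra theorem quoted just above, $\E(g\mid Z_k)=0$ is equivalent to $\norm{g}_{U^{k+1}(X,\mu,T)}=0$, and by monotonicity $\norm{g}_{U^{k}(X,\mu,T)}\le\norm{g}_{U^{k+1}(X,\mu,T)}=0$. As $k$ was arbitrary, $\norm{f-\E(f\mid Z_\infty)}_{U^k}=0$ for all $k\in\N$.

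For the converse, suppose $h\in L^\infty(\mu)$ has $\norm{h}_{U^k(X,\mu,T)}=0$ for every $k\in\N$. Then in particular $\norm{h}_{U^{k+1}}=0$ for every $k$, so the Host--Kra theorem gives $\E(h\mid Z_k)=0$ for every $k\in\N$. It remains to upgrade this to $\E(h\mid Z_\infty)=0$. Here I would invoke the inverse-limit structure: by property \eqref{point inverse 1} of the inverse limit together with part (ii) of its definition, the union $\bigcup_{k\in\N}\{\varphi\circ\pi_k:\varphi\in L^2(m_k)\}$ is dense in $L^2(m)$ on $Z_\infty$, which is exactly the statement that the $\sigma$-algebras of the $Z_k$ increase to that of $Z_\infty$; hence $\E(h\mid Z_k)\to\E(h\mid Z_\infty)$ in $L^2(\mu)$ by the increasing martingale convergence theorem. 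Since every term of the sequence is $0$, the limit $\E(h\mid Z_\infty)$ is $0$ as well.

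The argument has no serious obstacle; the one point that needs a little care is the first implication, where one must be sure that $Z_\infty$ being a factor of $X$ legitimately lets us use the tower property $\E(\E(f\mid Z_\infty)\mid Z_k)=\E(f\mid Z_k)$ — i.e. that $Z_k$ is genuinely a factor of $Z_\infty$ and not merely of $X$ — which is immediate from property \eqref{point inverse 1} of the inverse limit ($\pi_k=\pi_{k,\infty}\circ\pi_\infty$ for the connecting maps), and the monotonicity of the uniformity seminorms, which is classical but worth stating explicitly. Everything else is a direct citation of the Host--Kra characterization of $Z_k$ plus martingale convergence.
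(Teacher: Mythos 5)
Your proof is correct, and for the forward direction it is genuinely shorter than the paper's own argument. The paper handles the claim $\lVert f-\E(f\mid Z_\infty)\rVert_{U^k}=0$ by writing $\E(f\mid Z_\infty)$ as an $L^2$-limit of $\E(f\mid Z_s)$, noting that $\lVert f-\E(f\mid Z_s)\rVert_{U^k}\le\lVert f-\E(f\mid Z_s)\rVert_{U^{s+1}}=0$ for $s\ge k$, and then passing to the limit using continuity of the $U^k$-seminorm along this convergent sequence. You bypass the limiting step entirely: the tower property gives $\E\bigl(f-\E(f\mid Z_\infty)\mid Z_k\bigr)=0$ for every $k$ directly, and the finite-step Host--Kra characterization plus seminorm monotonicity immediately yields $\lVert f-\E(f\mid Z_\infty)\rVert_{U^m}=0$ for all $m$, with no need to argue about continuity of the seminorm under $L^2$-approximation. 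For the converse both arguments are essentially identical (yours is the contrapositive of theirs): Host--Kra gives $\E(h\mid Z_k)=0$ for all $k$, and the martingale/inverse-limit structure identifies $\E(h\mid Z_\infty)$ with $\lim_s\E(h\mid Z_s)=0$. One small wording caveat: you write that the nesting $Z_k\subset Z_{k+1}$ ``yields'' the monotonicity $\lVert\cdot\rVert_{U^k}\le\lVert\cdot\rVert_{U^{k+1}}$, but the logical direction is really the reverse --- monotonicity is an analytic fact coming from the recursive definition (iterated Cauchy--Schwarz), and the factor nesting is a consequence of it via the Host--Kra theorem. You do add the correct parenthetical, so this is cosmetic rather than a gap.
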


    \begin{proof} By the definition of $Z_{\infty}$, we have $\displaystyle \E(f \mid Z_{\infty}) = \E(f \mid Z_s)$ in $L^2(\mu)$. For  $k \in \N$ fixed,
        \begin{equation*}
            \lVert f - \E(f \mid Z_s) \rVert_{U^k(X,\mu,T)} \leq \lVert f - \E(f \mid Z_s) \rVert_{U^{s+1}(X,\mu,T)} = 0 \quad \text{ for all } s \geq k.
        \end{equation*}
        Thus $\displaystyle \lVert f - \E(f \mid Z_{\infty}) \rVert_{U^k(X,\mu,T)} = \lim_{s \to \infty} \lVert f - \E(f \mid Z_s) \rVert_{U^k(X,\mu,T)} =0$.

        For the converse, assume $\mathbb{E}(h | Z_{\infty}) \neq 0$ in $L^2(\mu)$. Then, since $\mathbb{E}(h | Z_{\infty})= \lim_{s\to \infty} \mathbb{E}(h | Z_s)$ in $L^2(\mu)$, there must exist $k\in \N$ so that $\mathbb{E}(h | Z_k) \neq 0$ in $L^2(\mu)$. However this contradicts the assumption that $\norm{h}_{U^{k+1}(X,\mu,T)}=0$.
    \end{proof}
A property that we use throughout the article is that of unique ergodicity of 
orbit closures in infinite-step pronilsystems.    
\begin{prop} \label{lemma Zd action in infinite set pronilsystem} \label{appendix lemam to interchange limits}
    Let $(Z, m,T)$ be an infinite-step pronilsystem, $d \geq 1$ and $\Phi=(\Phi_N)_{N\in \N}$ a F\o lner sequence in $\Z^d$. Then, for every $k \geq 2$, any $x_1, \ldots, x_k \in Z$ and any non-trivial $S_i = T^{a_{i,1}} \times T^{a_i,2} \times \cdots \times T^{a_{i,k}}$ for $i =1, \ldots , d$, the measure 
    \begin{equation*}
        \rho =  \lim_{N \to \infty} \frac{1}{|\Phi_N|} \sum_{(n_1, \dots, n_d) \in \Phi_N} (S_1^{n_1} \cdots  S_d^{n_d}) \delta_{(x_1,\dots,x_k)},
    \end{equation*}
    where the limit is in the weak* topology, is well-defined. Moreover $\rho$ is independent from the F\o lner sequence $\Phi$. 
\end{prop}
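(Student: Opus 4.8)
The plan is to reduce the statement to the case where $(Z,m,T)$ is a finite-step nilsystem and then invoke Leibman's equidistribution theorem for polynomial orbits on nilmanifolds.

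First I would set up the reduction. By definition an infinite-step pronilsystem is an inverse limit of finite-step pronilsystems, each of which is itself an inverse limit of finite-step nilsystems; diagonalizing over this double tower (equivalently, using the density of $\bigcup_l L^2(m_l)$ in $L^2(m)$ from the definition of inverse limit in \cref{Terminology}), one realizes $(Z,m,T)$ as an inverse limit of a sequence of finite-step nilsystems $(W_l,m_l,T)_{l\in\N}$ with continuous factor maps $\pi_l\colon Z\to W_l$, and correspondingly $Z^k=\varprojlim W_l^k$. Since $\bigcup_l\{f\circ(\pi_l\times\cdots\times\pi_l):f\in C(W_l^k)\}$ is uniformly dense in $C(Z^k)$, it is enough to show that for each fixed $l$ and each $f\in C(W_l^k)$ the averages $\frac1{|\Phi_N|}\sum_{n\in\Phi_N}f\big(\bar S_1^{n_1}\cdots\bar S_d^{n_d}\bar x\big)$ converge as $N\to\infty$ to a limit independent of $\Phi$, where $\bar x=(\pi_lx_1,\dots,\pi_lx_k)$ and $\bar S_i$ is the transformation on $W_l^k$ induced by $S_i$. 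The resulting positive linear functionals are integration against Borel probability measures $\rho_l$ on $W_l^k$, and these are compatible under the coordinatewise projections $W_l^k\to W_{l'}^k$ because push-forward is weak$^*$ continuous and commutes both with the averaging and with the dynamics; hence they assemble into a single Borel probability measure $\rho$ on $Z^k$, which is then the asserted weak$^*$ limit and inherits independence from the choice of $\Phi$.

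For the nilsystem case, let $W=G/\Gamma$ be a finite-step nilsystem with $T$ equal to left translation by $g\in G$. Then $W^k=G^k/\Gamma^k$ is again a nilmanifold, and, since each coordinate of $S_i$ is a power of $T$, the induced transformation on $W^k$ is left translation by $b_i:=(g^{a_{i,1}},\dots,g^{a_{i,k}})\in G^k$; the elements $b_1,\dots,b_d$ pairwise commute. Writing $x=h\Gamma^k$, the orbit map $(n_1,\dots,n_d)\mapsto S_1^{n_1}\cdots S_d^{n_d}x=b_1^{n_1}\cdots b_d^{n_d}h\,\Gamma^k$ is therefore a polynomial (indeed exponential) map $\Z^d\to W^k$. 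I would then invoke Leibman's theorem on the distribution of polynomial sequences on nilmanifolds (see, e.g., \cite{Host_Kra_nilpotent_structures_ergodic_theory:2018}): the orbit closure $Y$ is a finite union of sub-nilmanifolds of $W^k$ cyclically permuted by the action, and the sequence is well-distributed in $Y$ with respect to the normalized Haar measure $m_Y$; equivalently, $\frac1{|\Phi_N|}\sum_{n\in\Phi_N}\delta_{S_1^{n_1}\cdots S_d^{n_d}x}\to m_Y$ in the weak$^*$ topology along every \Folner\ sequence $\Phi$ in $\Z^d$. In particular the limit exists and does not depend on $\Phi$, which combined with the reduction proves the proposition.

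The substantive input is Leibman's equidistribution theorem, which is quoted; the step I expect to require the most care is the passage through the inverse limit, namely verifying that the level-$l$ limit measures are genuinely compatible and define a Borel probability measure on $Z^k$, rather than merely that the averages converge when tested against functions pulled back from finite levels. An alternative that avoids making the reduction explicit is to argue that the orbit closure of $x$ inside $Z^k$ is itself a sub-pronilsystem on which the $\Z^d$-action by nilrotations is uniquely ergodic (using that products and orbit closures of pronilsystems are pronilsystems, together with the corresponding unique ergodicity statement from the structure theory recorded in \cite{Host_Kra_nilpotent_structures_ergodic_theory:2018}), and then to use that unique ergodicity implies convergence of \Folner\ averages for every continuous function.
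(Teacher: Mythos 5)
Your proposal is correct, and in fact the ``alternative'' you sketch in the final sentences is precisely the route the paper takes: one shows that the orbit closure $\Omega_\infty$ of $(x_1,\dots,x_k)$ in $Z^k$ under the $\Z^d$-action generated by $S_1,\dots,S_d$ is the inverse limit of the corresponding orbit closures $\Omega_s$ at the finite-step levels $Z_s^k$, each of which is a minimal uniquely ergodic sub-(pro)nilsystem, so $\Omega_\infty$ is itself minimal and uniquely ergodic and the \Folner\ averages converge (to the unique invariant measure) along any \Folner\ sequence. Your main route instead reduces to genuine finite-step nilsystems, invokes Leibman's equidistribution theorem for polynomial $\Z^d$-orbits on nilmanifolds to get convergence and $\Phi$-independence at each level, and then re-assembles the level measures into a single Borel measure on $Z^k$ by weak$^*$-compatibility under the projection maps. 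This works, but it is heavier machinery than the paper needs: Leibman's theorem is really the well-distribution/unique-ergodicity statement in disguise, and by working directly with orbit closures and their inverse limit you avoid having to argue explicitly that the family $(\rho_l)_l$ is consistent and extends to a probability measure on $Z^k$ --- that bookkeeping is exactly the step you flag as needing care, and it is subsumed automatically once you observe that $\Omega_\infty = \varprojlim \Omega_s$ is uniquely ergodic. So: your main argument is fine, your alternative is what the authors do, and the alternative is the cleaner of the two.
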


\begin{proof}
    Let $Z_s$ be the $s$-step pronilfactor of $Z$ with factor map $\pi_s \colon Z \to Z_s$. It follows by \cite{Host_Kra_nonconventional_averages_nilmanifolds:2005} that $\pi_k \colon Z \to Z_s$ is continuous. Let $\Omega_s$ be the orbit closure of $\boldsymbol (\pi_s(x_1), \pi_s(x_2), \ldots, \pi_s(x_k)) \in Z_{s}^{k}$ under the $\Z^d$-action given by the maps $S_1, \ldots, S_d $. By properties of pronilsystems, $(\Omega_s, S_1, \ldots, S_d)$ is minimal and uniquely ergodic. If $\Omega_{\infty}$ is the orbit closure of  $\boldsymbol (x_1, x_2, \ldots, x_k) \in Z^{k}$ under the $\Z^d$-action given by the maps $S_1, \ldots, S_d $, then $\Omega_{\infty}$ is the inverse limit of $(\Omega_{s})_{s\in \N}$ and therefore is minimal and uniquely ergodic which concludes the proposition. 
    \end{proof}

\subsection{Progressive measures}

We now define the notion of progressiveness for measures on 
finite (and infinite) product spaces. This property can be 
viewed as a variation of recurrence. Later, we   introduce a more general property, which can analogously be viewed 
as variation of multiple recurrence. These properties are the key ingredients that allow us to prove our main dynamical theorems.

\begin{defn} \label{definition left-right progressive measure}
    Let $(X,T)$ be a topological system and let $k\in \N$. We say that a Borel probability measure $\tau \in \cM (X^{k+1})$ is \emph{left-progressive} if for all open sets $U_1, \ldots , U_k \subset X$ with 
    $$ \tau(X \times U_1 \times \cdots \times U_k) >0,$$
there exist infinitely many $n \in \N$ such that
\begin{equation*} 
    \tau((X \times U_1 \times \cdots \times U_k) \cap (T \times  \cdots \times T)^{-n} (U_1 \times \cdots \times U_k \times X)) >0.
\end{equation*}
We say that $\tau$ as above is \emph{right-progressive} if for all open sets $U_1, \ldots , U_{k-1} \subset X$ with 
$$\tau(X \times U_1 \times \cdots \times U_{k-1} \times X) >0,$$ 
there exist infinitely many $n \in \N$ such that
\begin{equation*} 
    \tau((X \times U_1 \times \cdots \times U_{k-1} \times X) \cap (T \times  \cdots \times T)^{-n} (X \times X \times U_1 \times \cdots \times U_{k-1})) >0.
\end{equation*}
Finally, a measure is \emph{progressive} if it is both left- and right-progressive. 
\end{defn}

For some measures on product spaces that we   consider, 
the first marginal has a distinguished role from the rest. 
As a useful convention we   index such measures starting 
from the $0$-th coordinate. 

\begin{remark*}
    In \cite[Definition 3.1]{kmrr25}, the authors introduced a notion of progressive measures that coincides with what we refer to as left-progressive.
\end{remark*}

We naturally extend this notion to measures $\tau \in \cM(X^{\N_0})$.

\begin{defn} \label{definition progressive measures in XN}
    Let $(X,T)$ be a topological system.  We say that a Borel probability measure $\tau \in \cM(X^{\N_0})$ is \emph{progressive} if $P_k \tau$ is progressive for all $k \geq 2$, where $P_k \colon X^{\N_0} \to X^{k+1}$ is the projection onto the first $k+1$ coordinates (including the $0$-th coordinate). 
\end{defn}

We now proceed to define a Borel measure $\sigma$ on the infinite product
space $X^{\N_0}$, with base space coming from a given measure preserving 
system $(X,\mu,T)$, which   turn out to be progressive. In order to 
achieve this we   first define a measure $\xi$ on $Z_{\infty}^{\N}$, where $(Z_{\infty},m,T)$ 
is the infinite-step pronilfactor of $(X,\mu,T)$ and then lift this measure 
on $X^{\N_0}$. We remark that the definition of $\sigma$ crucially depends 
on a single predetermined point $a\in X$. However, the factor map $\pi: X\to Z_{\infty}$, which is used to lift $\xi$ to $\sigma$, is a priori only measurable and thus defined almost everywhere (hence, potentially, not on $a$). Fortunately, we can assume that the factor map is actually continuous, a manoeuvre known among aficionados that was also exploited in \cite{kmrr25}. This reduction is formally proven at the end of this section.

\begin{defn} \label{def top infinite step pronil}
We say that $(X,\mu,T)$ has a \emph{topological infinite-step pronilfactor} if there is a continuous factor map $\pi_{\infty}: X \to Z_{\infty}$, where $(Z_{\infty},m,T)$ denotes the infinite-step pronilfactor of $(X,\mu,T)$.  
\end{defn}

Let $(X, \mu,T)$ be an ergodic system with topological infinite-step 
pronilfactor $(Z_{\infty},m,T)$. Let also $\Phi=(\Phi_N)_{N\in \N}$ be 
a F\o lner sequence in $\N$ and $a \in \gen(\mu,\Phi)$. Similarly to \cite{kmrr25}, our goal is to construct a measure $\xi \in \cM(Z_{\infty}^{\N})$ given by
\begin{equation} \label{eq def xi}
   \xi =  \lim_{N \to \infty} \frac{1}{|\Phi_N|} \sum_{n \in \Phi_N} \prod_{i=1}^{\infty}\delta_{T^{in}\pi_{\infty}(a)},
\end{equation}
where the limit (assuming it exists) is in the weak* topology, then lift it and build a progressive measure $\sigma \in \cM(X^{\N_0})$ (see \eqref{eq definition sigma}). For that we first need to ensure that the limit in \eqref{eq def xi} is well-defined.

\begin{lemma} \label{xi_k is well-defined}
    Let $(Z, m,T)$ be an infinite-step pronilsystem, $a \in Z$ and $\Phi=(\Phi_N)_{N\in \N}$ a F\o lner sequence in $\N$. Then for every $k \geq 2$, the measure 
    \begin{equation*}
        \xi_k =  \lim_{N \to \infty} \frac{1}{|\Phi_N|} \sum_{n \in \Phi_N} \delta_{T^n a} \times \delta_{T^{2n}a} \times \cdots \times  \delta_{T^{kn}a}
    \end{equation*}
    is well-defined. Moreover $\xi_k$ is independent from the F\o lner sequence $\Phi$. 
\end{lemma}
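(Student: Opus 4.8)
The plan is to recognise the average defining $\xi_k$ as a single-transformation orbit average on the product system $Z^k$ and then to invoke the unique ergodicity supplied by \cref{lemma Zd action in infinite set pronilsystem}. Write $\tilde T = T \times T^2 \times \cdots \times T^k$ for the product transformation on $Z^k$. Since $\tilde T^n(a,a,\ldots,a) = (T^n a, T^{2n}a, \ldots, T^{kn}a)$ for every $n \in \Z$, we have, for each $N$,
\begin{equation*}
\frac{1}{|\Phi_N|} \sum_{n \in \Phi_N} \delta_{T^n a} \times \delta_{T^{2n}a} \times \cdots \times \delta_{T^{kn}a} = \frac{1}{|\Phi_N|} \sum_{n \in \Phi_N} \tilde T^n \delta_{(a,a,\ldots,a)},
\end{equation*}
so it suffices to prove that these $\tilde T$-orbit averages converge in the weak* topology to a limit independent of $\Phi$.

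This is precisely \cref{lemma Zd action in infinite set pronilsystem} applied with $d = 1$, with single generator $S_1 = T \times T^2 \times \cdots \times T^k$ (which is non-trivial, its exponent vector being $(1,2,\ldots,k) \neq 0$) and with $x_1 = x_2 = \cdots = x_k = a$; the only point to check is that a \Folner\ sequence in $\N$ is in particular a \Folner\ sequence in $\Z$, which follows from the identity $|\Phi_N \cap (\Phi_N + t)| = |\Phi_N \cap (\Phi_N - t)|$. Equivalently, and essentially by the same argument, one may proceed directly: each $Z_s^k$ equipped with $T \times T^2 \times \cdots \times T^k$ is an $s$-step nilsystem (a finite product of nilsystems is a nilsystem), hence $(Z^k, \tilde T)$, being the inverse limit of $(Z_s^k)_{s \in \N}$, is itself an infinite-step pronilsystem; therefore the orbit closure $\Omega = \overline{\{\tilde T^n(a,\ldots,a) : n \in \Z\}}$ is minimal and uniquely ergodic, and the averages $\tfrac{1}{|\Phi_N|}\sum_{n\in\Phi_N} \tilde T^n \delta_{(a,\ldots,a)}$ converge along every \Folner\ sequence to the unique $\tilde T$-invariant measure on $\Omega$, yielding both the existence of $\xi_k$ and its independence of $\Phi$.

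There is essentially no obstacle here: the statement is a direct specialisation of \cref{lemma Zd action in infinite set pronilsystem}. The only steps deserving a word of care are the reduction from $\N$- to $\Z$-\Folner\ sequences (equivalently, passing to the orbit closure under the $\Z$-action rather than the $\N$-semigroup action) and the standard stability of the (pro)nilsystem class under finite products and inverse limits, both of which are routine.
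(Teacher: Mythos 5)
Your proof is correct and takes exactly the paper's route: the paper's own one-line argument is precisely the specialisation of \cref{lemma Zd action in infinite set pronilsystem} with $d=1$ and $S_1 = T\times T^2 \times\cdots\times T^k$, and your direct argument via the orbit closure of $(a,\ldots,a)$ being a minimal, uniquely ergodic inverse limit of finite-step orbit closures matches the paper's reasoning as well. Your additional observation that a \Folner\ sequence in $\N$ is automatically one in $\Z$ (via $|\Phi_N \cap (\Phi_N+t)| = |\Phi_N \cap (\Phi_N-t)|$) is a small detail the paper leaves implicit, correctly handled.
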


\begin{proof}
This is a corollary of \cref{appendix lemam to interchange limits} with $d=1$ and $S_1=T\times T^2 \times \cdots \times T^k$.
\end{proof}

By a classical approximation argument we deduce that the limit in \eqref{eq def xi} exists.

\begin{cor}
    The measure given by \eqref{eq def xi} is well-defined. 
\end{cor}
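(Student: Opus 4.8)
The plan is to deduce the existence of the weak* limit in \eqref{eq def xi} from the compactness of the space of probability measures together with \cref{xi_k is well-defined}. First I would record the ambient topological facts: $Z_{\infty}$ is an inverse limit of countably many nilsystems $G/\Gamma$, each compact metric, so $Z_{\infty}$ is compact metric; hence $Z_{\infty}^{\N}$, with the product topology, is compact and metrizable, and consequently $\cM(Z_{\infty}^{\N})$ with the weak* topology is compact and metrizable. Writing $\xi^{(N)} := \frac{1}{|\Phi_N|}\sum_{n\in\Phi_N}\prod_{i=1}^{\infty}\delta_{T^{in}\pi_{\infty}(a)} \in \cM(Z_{\infty}^{\N})$, it then suffices to prove that the sequence $\big(\xi^{(N)}\big)_{N\in\N}$ has a unique weak* limit point.

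To identify the limit points I would test against cylinder functions. By the Stone--Weierstrass theorem, the subalgebra of $C(Z_{\infty}^{\N})$ consisting of functions of the form $g\circ q_k$, where $q_k\colon Z_{\infty}^{\N}\to Z_{\infty}^{k}$ is the projection onto the first $k$ coordinates and $g\in C(Z_{\infty}^{k})$, is dense in $C(Z_{\infty}^{\N})$ (it separates points, is conjugation-invariant, and contains the constants). For such a function,
$$\int g\circ q_k\,\diff \xi^{(N)} = \frac{1}{|\Phi_N|}\sum_{n\in\Phi_N} g\big(T^{n}\pi_{\infty}(a),\,T^{2n}\pi_{\infty}(a),\,\ldots,\,T^{kn}\pi_{\infty}(a)\big) = \int g\,\diff \xi_k^{(N)},$$
where $\xi_k^{(N)}$ denotes the $N$-th \Cesaro\ average appearing in the definition of $\xi_k$ in \cref{xi_k is well-defined}, applied with the infinite-step pronilsystem $(Z_{\infty},m,T)$ and the point $\pi_{\infty}(a)\in Z_{\infty}$. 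That lemma gives $\xi_k^{(N)}\to\xi_k$ weakly, so $\int g\circ q_k\,\diff\xi^{(N)}\to\int g\,\diff\xi_k$ as $N\to\infty$. Hence $\lim_N\int f\,\diff\xi^{(N)}$ exists for every $f$ in a dense subset of $C(Z_{\infty}^{\N})$; a routine $3\epsilon$-argument, using that the $\xi^{(N)}$ are probability measures, upgrades this to convergence of $\int f\,\diff\xi^{(N)}$ for every $f\in C(Z_{\infty}^{\N})$. Two weak* limit points of $\big(\xi^{(N)}\big)$ therefore agree on all of $C(Z_{\infty}^{\N})$, hence coincide, so the sequence converges to a measure $\xi\in\cM(Z_{\infty}^{\N})$ whose projection onto the first $k$ coordinates is $\xi_k$ for every $k\geq 2$.

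I do not expect a genuine obstacle here --- this is exactly the ``classical approximation argument'' alluded to in the statement --- and the only steps meriting a sentence of justification are the density of cylinder functions (Stone--Weierstrass) and the compactness and metrizability of $\cM(Z_{\infty}^{\N})$. If one additionally wants $\xi$ to be independent of the \Folner\ sequence used in the averaging, this is immediate from the corresponding independence of each $\xi_k$ in \cref{xi_k is well-defined}, since the marginals $\xi_k$, $k\geq 2$, determine $\xi$.
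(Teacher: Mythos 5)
Your argument is correct and is essentially the same ``classical approximation argument'' the paper has in mind: reduce to cylinder functions, invoke \cref{xi_k is well-defined} for each $k$, and use density (via Stone--Weierstrass) plus compactness of $\cM(Z_{\infty}^{\N})$ to conclude. The paper phrases the density step in terms of the linear span of tensor products $f_1\otimes\cdots\otimes f_k$ rather than all $k$-cylinder functions and also records explicitly the compatibility $P_k^*\xi_n=\xi_k$ for $k\le n$, but these are cosmetic differences, not a different route.
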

\begin{proof}
This follows at once from \cref{xi_k is well-defined} using the definition of weak* convergence, the observation that $P_k^*\xi_{n}=\xi_k$, for each $k,n\in \N$, with $k\leq n$, where $P_k^*$ denotes the projection onto the first $k$ coordinates, and the fact that $\{f_1 \otimes \dots \otimes f_k : k\in \N, f_1,\ldots,f_k \in C(Z_{\infty})\}$ is densely embedded in $C(Z_{\infty}^{\N})$. 
\end{proof}

With that we can define the Borel probability measure $\sigma \in \cM(X^{\N_0})$ through the use of Kolmogorov's extension theorem as follows. 
\begin{defn}\label{definition of sigma}
    For $k\in \N$, we define the measure $\sigma_k\in \cM(X^{k+1})$ via the formula 
\begin{equation} \label{eq definition sigma}
    \int_{X^{k+1}} f_0 \otimes  f_1 \otimes \cdots \otimes f_k\ \diff \sigma_k = f_0(a) \int_{Z^{k}_{\infty}}  \E( f_1 \mid Z_{\infty}) \otimes \cdots \otimes \E( f_k \mid Z_{\infty})\ \diff P_k^* \xi,
\end{equation}
for all $f_0,f_1, \ldots, f_k \in C(X) $, where $P_k^*$ denotes the projection onto the coordinates $\{1,\ldots,k\}$ (excluding the $0$-th coordinate). It follows by Stone-Weierstrass' theorem that each $\sigma_k$ is well-defined via \eqref{eq definition sigma}. The sequence of measures $(\sigma_k)_{k\in \N}$ satisfies the compatibility condition of Kolmogorov's extension theorem, so we denote by $\sigma \in \cM(X^{\N_0})$ the unique extension of this sequence. 
\end{defn}

\begin{remark*}
    By definition, for any $k\in \N$, $P_k\sigma=\sigma_k$, where $P_k$ denotes the projection onto the first $k+1$ coordinates, including the $0$-th coordinate. It follows immediately from the definition that $P_k\sigma$ is invariant under $Id\times T\times \cdots \times T^k$. Moreover, for each $k\in \N$ and for each $i=1,\ldots,k$ we have that the marginals $P_k\sigma_i$ of $P_k\sigma$ satisfy $P_k\sigma_i \leq i\mu$, i.e., $P_k\sigma_i (A)\leq i\mu(A)$ for any Borel set $A\subset X$. This follows using the unique ergodicity of orbits on $(Z_{\infty},T)$ and the $T^i$-invariance of the marginals $P_k^*\xi_i$ of $P_k^*\xi$ as in \cite[Lemma 4.6]{kmrr25}.
\end{remark*}
In order to prove Theorems \ref{dynamical reform A} and \ref{dynamical reform B} we first work under the assumption of 
topological infinite-step pronilfactors. This allows us to define the measure 
$\sigma$ as above, and using this we   prove the following theorems.

\begin{theo}\label{topological pronilfactors 2}
Let $(X,\mu,T)$ be an ergodic system with topological infinite-step pronilfactor. Let $\Phi=(\Phi_N)_{N\in \N}$ be a \Folner\ sequence, let 
$a\in \gen(\mu,\Phi)$ and let $\sigma$ be the measure defined in \cref{definition of sigma}. Then, for $\ell\in \N$ fixed and any
open set $E\subset X$ with $\mu(E)>0$ there exist sequences $(b_n)_{n\in \N}, (t_k)_{k\in \N}\subset \N$ such 
that
\begin{equation*}
       0<\sigma_{k+\ell+1}\left( \bigcap_{i=1}^{k+1}  \prod_{j=0}^{k+\ell+1}\bigcap_{h=\max(j,i)}^{i+\ell}  T^{-\left( t_i+ B_k^{\oplus h-j} \right)}  E   \right),  
   \end{equation*}
for every $k\in \N$, where $B_k=\{b_1,\ldots,b_{k}\}$.
\end{theo}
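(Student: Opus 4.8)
The plan is to construct the sequences recursively, $b_1<b_2<\cdots$ and $t_1<t_2<\cdots$, and to prove the displayed inequality by induction on $k$, carrying along an auxiliary box to which the progressive properties of $\sigma$ can be applied. Write $\mathcal{B}_k$ for the set in the statement; it is a box in $X^{k+\ell+2}$ (each factor $\bigcap_{h=\max(j,i)}^{i+\ell}T^{-(t_i+B_k^{\oplus h-j})}E$ being a finite intersection of sets $T^{-S}E$), and it is the intersection $\bigcap_{i=1}^{k+1}\mathrm{block}_i$ of the ``blocks'' $\mathrm{block}_i=\prod_{j}\bigcap_{h=\max(j,i)}^{i+\ell}T^{-(t_i+B_k^{\oplus h-j})}E$. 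The first point is a \emph{decoupling of the $0$-th coordinate}: since the first marginal of $\sigma$ is $\delta_a$, the measure $\sigma_m=P_m\sigma$ is supported on $\{a\}\times X^m$, so $\sigma_m(V_0\times V_1\times\cdots\times V_m)=\ind{V_0}(a)\,\nu_m(V_1\times\cdots\times V_m)$, where $\nu_m$ is the marginal of $\sigma_m$ on coordinates $1,\dots,m$. Thus $\sigma_{k+\ell+1}(\mathcal{B}_k)>0$ amounts to: \textbf{(i)}~$a$ lies in the $0$-th factor of $\mathcal{B}_k$, i.e.\ $T^{(t_i+b_{m_1}+\cdots+b_{m_h})}a\in E$ for all admissible $i$, $h$ and distinct $b_{m_j}\in B_k$; and \textbf{(ii)}~$\nu_{k+\ell+1}$-positivity of the product of the factors of $\mathcal{B}_k$ of index $\ge 1$. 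The induction produces a box $R_k\subseteq X^{k+\ell+2}$ with $\sigma_{k+\ell+1}(R_k)>0$ whose factors of index $\ge 1$ are contained in those of $\mathcal{B}_k$ — yielding \textbf{(ii)} — while \textbf{(i)} is accumulated across the stages.

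The base case $k=0$ is immediate: $B_0=\emptyset$ forces $\mathcal{B}_0=X\times T^{-t_1}E\times\cdots\times T^{-t_1}E$ with $\ell+1$ copies of $T^{-t_1}E$, so the initialization property (\cref{Initialization-step}) supplies $t_1$ with $\sigma_{\ell+1}(\mathcal{B}_0)>0$, and we set $R_0=\mathcal{B}_0$.

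For the inductive step, given $R_k$ and the data $b_1,\dots,b_k$, $t_1,\dots,t_{k+1}$, I would produce $b_{k+1}>b_k$, $t_{k+2}>t_{k+1}$ and $R_{k+1}$ in three moves. \emph{(1)}~View $R_k$ inside $X^{k+\ell+3}$ by appending a trivial last coordinate, and relax its $0$-th factor to $X$; both operations only increase the measure, so $\sigma_{k+\ell+2}(\mathcal{G})>0$ for the resulting box $\mathcal{G}$, whose $0$-th and last factors equal $X$. \emph{(2)}~Apply right-progressiveness of $\sigma_{k+\ell+2}$ once to $\mathcal{G}$: it yields infinitely many $n$ for which the shifted-intersected box is $\sigma$-positive; choose such an $n$ large and set $t_{k+2}:=n+t_{k+1}$. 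In the new box the $0$-th factor is still $X$, the factor at coordinate $1$ is unchanged, and the right-hand factors contain the sets $T^{-(t_{k+2}+B_k^{\oplus h-j})}E$ comprising $\mathrm{block}_{k+2}$ of $\mathcal{B}_{k+1}$, possibly intersected with extra ``spurious'' translates of $E$ coming from carrying the blocks of index $\le k+1$ rightward — these only shrink the box. \emph{(3)}~Apply left-progressiveness of $\sigma_{k+\ell+2}$ once with $b_{k+1}$ chosen large (hence $>b_k$): this replaces each factor $V_j$ by $V_j\cap T^{-b_{k+1}}V_{j+1}$, and the identity $B_{k+1}^{\oplus m}=B_k^{\oplus m}\cup\bigl(b_{k+1}+B_k^{\oplus m-1}\bigr)$ (convention: $B^{\oplus m}=\emptyset$ for $m<0$), together with the fact that this replacement commutes with intersections, shows that it carries each $\mathrm{block}_i$ of $\mathcal{B}_k$ ($i\le k+1$) to $\mathrm{block}_i$ of $\mathcal{B}_{k+1}$ and the $\mathrm{block}_{k+2}$-seed from step (2) to $\mathrm{block}_{k+2}$ of $\mathcal{B}_{k+1}$. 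Let $R_{k+1}$ be the outcome: $\sigma_{k+\ell+2}(R_{k+1})>0$, and up to the harmless spurious shrinkages its factors of index $\ge 1$ lie inside those of $\mathcal{B}_{k+1}$. Its $0$-th factor equals $T^{-b_{k+1}}$ of the coordinate-$1$ factor of $R_k$, which is contained in the ``$b_{k+1}$-part'' of the $0$-th factor of $\mathcal{B}_{k+1}$; since $a\in R_{k+1}$'s $0$-th factor, this records exactly the instances of \textbf{(i)} involving $b_{k+1}$ as a summand, and combined with \textbf{(i)} at stage $k$ — the $0$-th factor of $\mathcal{B}_{k+1}$ being the intersection of that of $\mathcal{B}_k$ with its $b_{k+1}$-part — gives \textbf{(i)} for $\mathcal{B}_{k+1}$. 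Since $\mathcal{B}_m,R_m$ for $m\le k$ depend only on $b_1,\dots,b_m,t_1,\dots,t_{m+1}$, nothing proved earlier is disturbed; the limiting sequences, increasing by construction, satisfy the conclusion for every $k$.

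The main obstacle is the bookkeeping in moves (2) and (3): one has to check, factor by factor and minding the boundary coordinates and the degenerate cases $B^{\oplus m}$ with $m\le 0$, that a single right-progressive step really does create $\mathrm{block}_{k+2}$'s seed at the correct coordinates for $t_{k+2}=n+t_{k+1}$, and that the following left-progressive step pushes everything into $\mathcal{B}_{k+1}$ in coordinates $\ge 1$ while its action on the $0$-th coordinate supplies precisely the recurrence instances not yet recorded. The remaining ingredients — the existence and progressiveness of $\sigma$ (from the later sections) and enlarging $n$, $b_{k+1}$ to keep the sequences strictly increasing — are routine.
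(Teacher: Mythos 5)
Your proposal is correct and follows essentially the same inductive strategy as the paper's proof: initialize with \cref{Initialization-step} and then alternate one application of right-progressiveness (to create the new shift $t_{k+2}$, seeding the new block) with one of left-progressiveness (to create $b_{k+1}$, coupling the blocks). The only cosmetic difference is that the paper applies right-progressiveness to the enlarged $i=k+1$ block so that the resulting positive-measure set \emph{equals} $\mathcal{B}_{k+1}$ and no separate ``accumulation of recurrence instances'' invariant is needed, whereas you carry a possibly smaller auxiliary box $R_k\subsetneq\mathcal{B}_k$ and track containments plus the $0$-th-coordinate constraints; both routes require the same factor-by-factor verification via $B_{k+1}^{\oplus m}=B_k^{\oplus m}\cup(b_{k+1}+B_k^{\oplus m-1})$ and the observation that the $i=k+2$ contributions with $B_k^{\oplus m}$, $m>k$, are vacuous.
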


\begin{theo}\label{topological pronilfactors 1}
Let $(X,\mu,T)$ be an ergodic system with topological infinite-step 
pronilfactor. Let $\Phi=(\Phi_N)_{N\in \N}$ be a \Folner\ sequence, 
let 
$a\in \gen(\mu,\Phi)$ and let $\sigma$ be the measure defined in \cref{definition of sigma}. Then, 
for every open set $E\subset X$ with $\mu(E)>0$ there exist 
sequences $(b_n)_{n\in \N}, (s_k)_{k\in \N}, (t_k)_{k\in \N}\subset \N$ such 
that 
 $$
\sigma_{k+1}\left(\bigcap_{i=1}^k  \prod_{j=0}^{k+1}  \bigcap_{\ell=\max(j,1)}^{i+1}  T^{-\left( t_i+\ell s_i+ B_{k}^{\oplus \ell-j} \right)}  E   \right)>0,$$
for every $k\in \N$, where $B_k=\{b_1,\ldots,b_{k}\}$.
\end{theo}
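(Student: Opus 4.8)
The plan is to build the sequences $(b_n), (s_k), (t_k)$ by an induction on $k$, maintaining at each stage the positivity of a carefully chosen cylinder set under $\sigma_{k+1}$, and to promote it to the next stage using the progressiveness properties of $\sigma$ together with the multiple right-progressiveness property advertised in the introduction (\cref{def multiple right progressive}). The target set at stage $k$,
$$
\sigma_{k+1}\!\left(\bigcap_{i=1}^k  \prod_{j=0}^{k+1}  \bigcap_{\ell=\max(j,1)}^{i+1}  T^{-\left( t_i+\ell s_i+ B_{k}^{\oplus \ell-j} \right)}  E   \right)>0,
$$
is a finite intersection of cylinders built from $E$, its $T$-translates, and the partial-sum sets $B_k^{\oplus m}$; since $E$ is open and $T$ is a homeomorphism, each such cylinder is open, so positivity is a robust condition that can be carried through weak* limits and intersections. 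First I would set up the base case $k=1$: using that the first marginal of $\sigma$ is $\delta_a$ and that $\mu(E)>0$, apply the initialization consequence of multiple right-progressiveness (as in \cref{new proof of initialization-step}) to produce $s_1,t_1$ and a first element $b_1$ so that the $k=1$ cylinder has positive $\sigma_2$-measure; the predetermined point $a$ enters exactly here, forcing the $0$-th coordinate to carry $\delta_a$ so that positivity of the cylinder records a genuine return time of $a$.

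For the inductive step, suppose the stage-$k$ cylinder is positive. The heart of the argument is to pass from $B_k=\{b_1,\dots,b_k\}$ to $B_{k+1}=B_k\cup\{b_{k+1}\}$, where $b_{k+1}$ is chosen much larger than all previous data, and simultaneously to introduce the new "diagonal" constraints indexed by $i=k+1$ (the new outermost layer of the product, which now ranges over $k+2$ coordinates). I would do this in the same spirit as the worked $k=4$ discussion in Section \ref{sec outline of main argument}: first apply multiple right-progressiveness to the stage-$k$ cylinder to shift a block of coordinates to the right by a common amount $t+\ell s$ (this is what creates the arithmetic-progression pattern $t_i+\ell s_i$ in the exponents and supplies the new shift data at level $k+1$), obtaining a positive cylinder of the same shape but now living in $X^{k+2}$ with an extra free $X$ factor at the end; then apply left-progressiveness $k+1$ times, each time extracting a larger return time, the union of which — via \cite[Lemma 2.2]{kmrr25}-type extraction inside the orbit of the generic point — yields the single new summand $b_{k+1}$ that can be freely added to any existing partial sum. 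The bookkeeping that the $\oplus$-operation is compatible with "adding one new element from a much larger scale" — i.e. that $B_{k+1}^{\oplus m} \subset B_k^{\oplus m} \cup (b_{k+1}+B_k^{\oplus m-1})$ — is exactly what makes the cylinder at stage $k+1$ decompose into constraints already controlled at stage $k$ plus new constraints handled by the fresh applications of progressiveness; this is the combinatorial glue of the induction.

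The main obstacle I expect is the simultaneous management of the two shift sequences and the nested index ranges $\max(j,1)\le \ell\le i+1$: each application of left-progressiveness produces infinitely many admissible $n$, but the cylinder one must keep positive at stage $k+1$ couples the new element $b_{k+1}$ against every one of the previously chosen diagonal layers $i=1,\dots,k$ at once, so one cannot simply iterate a two-coordinate recurrence — one needs that the multiple right-progressiveness property is strong enough to open up all the coordinates $i+1,\dots$ relative to coordinate $i$ in a single stroke, with a common $(t,s)$, and only then feed the result to left-progressiveness. Verifying that the intersection over $i$ of the positive cylinders produced this way is still positive (rather than just each one separately) is the delicate point; I would handle it by choosing, at each stage, $t_{k+1},s_{k+1}$ and then $b_{k+1}$ from a set of positive-measure-preserving return times that is defined to work uniformly for all $i\le k$ simultaneously — which is possible precisely because multiple right-progressiveness, applied to the marginal $P_{k+1}\sigma$, yields a positive-measure set of valid shift pairs, and left-progressiveness then yields an infinite (hence nonempty after finitely many constraints) set of valid $b_{k+1}$. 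The remaining verifications — that the resulting cylinder is exactly the stage-$(k+1)$ target, and that one may take $t_k,s_k$ and the enumeration of $B$ increasing — are routine rewriting using the $T^{-S}$ and $F^{\oplus i}$ conventions from Section \ref{notational conventions}.
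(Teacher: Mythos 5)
Your high-level skeleton is the right one — induct on $k$, use multiple right-progressiveness to introduce the new pair $(t_{k+1},s_{k+1})$, and then use left-progressiveness to supply the new element $b_{k+1}$ — and the combinatorial identity $B_{k+1}^{\oplus m} = B_k^{\oplus m} \cup (b_{k+1}+B_k^{\oplus m-1})$ that you flag is exactly the bookkeeping engine the paper uses. However, the mechanism you describe for the inductive step has a real gap.

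First, after applying multiple right-progressiveness with $\ell = k+2$ extra coordinates, the resulting positive cylinder lives in $\sigma_{2k+3}$ (i.e.\ on $X^{2k+4}$ counting the $0$-th coordinate), not in $X^{k+2}$ "with one extra free $X$ factor" as you state. The paper then needs the $k{+}1$ applications of left-progressiveness precisely to collapse this long cylinder back down to $\sigma_{k+2}$: each application slides the $E_k$-shifted block one slot to the left and discards a trailing coordinate, and the accumulated slide is absorbed into the new shift $t_{k+1}$. Those $k{+}1$ applications therefore produce the shift $t_{k+1}$, not the element $b_{k+1}$. Only one \emph{further} application of left-progressiveness, performed once you are back in $\sigma_{k+2}$, produces $b_{k+1}$ as a single recurrence time that intersects the entire stage-$(k+1)$ cylinder at once.

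Your proposal instead tries to get $b_{k+1}$ directly from the $k+1$ applications "by a \cite[Lemma 2.2]{kmrr25}-type extraction inside the orbit of the generic point, the union of which yields the single new summand." This does not work: the $k+1$ applications are made to \emph{different} (successively updated) positive cylinders, so there is no single infinite set of return times from which you can extract one common $b_{k+1}$; and an Erd\H{o}s-progression extraction is exactly the tool the paper is at pains to avoid here, because it only yields nested families of infinite sets (see their \cref{example appendix Erdos progressions}) rather than the single $B$ the theorem requires. Relatedly, in the base case $b_1$ also comes from one explicit application of left-progressiveness after the initialization step, not from the initialization itself; the paper simply sets $s_1 = 0$ there. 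Once you separate "reduce dimension and accumulate $t_{k+1}$" from "one final left-progressiveness step to get $b_{k+1}$," your outline aligns with the actual argument.
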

To prove that these results suffice to recover Theorems \ref{dynamical reform A} and \ref{dynamical reform B}, the only necessary 
ingredient is the fact that for a general measure 
preserving system $(X,\mu,T)$ with a priori measurable pronilfactors, one can 
pass to an extension with continuous factor map onto $X$ and with topological infinite-step topological pronilfactor that agrees with that of $X$. This technical result was established in \cite[Proposition $5.7$ and Lemma $5.8$]{Kra_Moreira_Richter_Robertson:2022} for pronilfactors of finite-step. The proof extends to the case of infinite-step pronilfactors needed for our setting.

\begin{prop}\label{reduction to topological pronilfactors is possible}
Let $(X, \mu,T)$ be an ergodic system and  $a \in X$ be a transitive point. Then there exists an ergodic system $(\tilde{X}, \tilde \mu, \tilde T)$ with topological infinite-step pronilfactor, a F\o lner sequence $\Psi$ in $\N$, a point $\tilde a \in \gen(\tilde \mu, \Psi)$ and a continuous factor map $\pi \colon \tilde X \to X$ with $\pi(\tilde a) = a$. Moreover $(\tilde{X}, \tilde \mu, \tilde T)$ and $(X, \mu,T)$ are measurably isomorphic. 
\end{prop}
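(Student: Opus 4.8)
The plan is to mimic the construction of \cite[Proposition 5.7 and Lemma 5.8]{Kra_Moreira_Richter_Robertson:2022}, pushing their argument from finite-step pronilfactors to the infinite-step pronilfactor. First I would recall the basic principle behind the finite-step case: given an ergodic system $(X,\mu,T)$ and a transitive point $a$, one wants a measurably isomorphic model $\tilde X$ in which the factor map onto $Z_\infty$ is genuinely continuous (not merely defined a.e.), and in which one still has access to a generic point over a F\o lner sequence that projects correctly onto $a$. The standard device is to build $\tilde X$ as a joining: take the orbit closure $Y = \overline{\{T^n a : n \in \N\}}$ inside $X$ (a topological system with transitive point $a$ and with $\mu(Y)=1$ by hypothesis), take a suitable topological model $Z_\infty^{\mathrm{top}}$ of the infinite-step pronilfactor together with a transitive point $z_\infty$, and let $\tilde X \subset Y \times Z_\infty^{\mathrm{top}}$ be the orbit closure of $(a, z_\infty)$ under $T \times T$. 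The coordinate projection $\pi \colon \tilde X \to Y \hookrightarrow X$ is continuous and surjective onto $Y$, hence $\pi(\tilde a) = a$ with $\tilde a = (a, z_\infty)$; and the other coordinate projection $\tilde X \to Z_\infty^{\mathrm{top}}$ is continuous by construction.

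Next I would equip $\tilde X$ with the right invariant measure. Fix a F\o lner sequence $\Phi$ along which $a$ is generic for $\mu$ (this exists by the correspondence principle setup, or just because $a$ is transitive in an ergodic system we may pass to such a sequence). Passing to a subsequence $\Psi$ of $\Phi$ if necessary, the averages $\frac{1}{|\Psi_N|}\sum_{n\in\Psi_N}\delta_{(T^n a, T^n z_\infty)}$ converge weak$^*$ to some $T\times T$-invariant measure $\tilde\mu$ on $\tilde X$, so that $\tilde a \in \gen(\tilde\mu,\Psi)$ by construction. Since the first-coordinate pushforward of each average is $\frac{1}{|\Psi_N|}\sum_{n\in\Psi_N}\delta_{T^n a}$, which converges to $\mu$, we get $\pi\tilde\mu = \mu$, so $\pi$ is a factor map and $(\tilde X,\tilde\mu,\tilde T)$ is an extension of $(X,\mu,T)$. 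Ergodicity of $\tilde\mu$ is not automatic from genericity, so here one argues as in \cite{Kra_Moreira_Richter_Robertson:2022}: the second-coordinate factor $Z_\infty^{\mathrm{top}}$ is uniquely ergodic (its invariant measure is the Haar measure $m$, because orbit closures of transitive points in infinite-step pronilsystems are uniquely ergodic, which is exactly \cref{lemma Zd action in infinite set pronilsystem} with $d=k=1$), and one shows the extension $\tilde X \to X$ is in fact measurably trivial — i.e. $\pi$ is a measure isomorphism — by checking that the joining $\tilde\mu$ is the relatively independent joining of $\mu$ and $m$ over their common factor $Z_\infty$, which forces $\tilde\mu$ to be supported on the graph of the (a.e.-defined) map $X \to Z_\infty^{\mathrm{top}}$. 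That graph is a measurable isomorphism onto $X$, giving both ergodicity of $\tilde\mu$ and the final "measurably isomorphic" clause. Finally, the composition $\tilde X \to Z_\infty^{\mathrm{top}}$ is a continuous factor map and it is the infinite-step pronilfactor of $\tilde X$ because it equals (up to the isomorphism $\pi$) the infinite-step pronilfactor of $X$; this is where one invokes that maximal pronilfactors are isomorphism invariants and that an inverse limit of topological models of the $Z_k$'s is a topological model of $Z_\infty$.

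The main obstacle is the step showing that $\tilde\mu$ is exactly the graph joining, equivalently that $\pi$ is a measure-theoretic isomorphism rather than merely a factor map — a priori $\tilde\mu$ could be a nontrivial self-joining of $X$ over $Z_\infty$. In the finite-step setting this is handled in \cite{Kra_Moreira_Richter_Robertson:2022} by a unique-ergodicity argument relative to the pronilfactor: the averages defining $\tilde\mu$ are controlled, after projecting to $Z_k^{\mathrm{top}}$, by unique ergodicity of orbit closures in $k$-step pronilsystems, and the relative independence over $Z_k$ then follows from the characteristic-factor property of $Z_k$ for the relevant averages. For the infinite-step version one runs this at every finite level $k$ — using \cref{lemma Zd action in infinite set pronilsystem} and \cref{infinite step nilfactor and HK seminorms} in place of their finite-step analogues — and passes to the limit; since the $Z_k$-marginals determine $\tilde\mu$ via the density of $\bigcup_k C(Z_k^{\mathrm{top}})\circ\pi_k$ (or more precisely of cylinder functions) in $C(\tilde X)$, this pins down $\tilde\mu$ uniquely and identifies it with the graph joining. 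Everything else is a routine transcription of the cited argument, replacing "$k$-step" by "infinite-step" throughout and using that all the relevant structural facts (unique ergodicity of transitive orbit closures, characteristicity, continuity of the pronilfactor map in a topological model) persist in the inverse limit.
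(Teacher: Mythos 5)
Your plan is a valid route, but it takes a genuinely longer path than the paper's actual proof and re-derives machinery that the paper simply cites. You reconstruct the joining argument of \cite[Proposition 5.7 and Lemma 5.8]{Kra_Moreira_Richter_Robertson:2022} directly at the infinite-step level: build $\tilde X$ as the orbit closure of $(a,z_\infty)$ inside $Y\times Z_\infty^{\mathrm{top}}$, take a generic limit measure $\tilde\mu$, and then prove $\tilde\mu$ is the graph joining via a relative-independence argument over $Z_\infty$ using the seminorm control in \cref{6.11 from kmrr25}. This can be made to work, but it forces you to re-run the delicate ergodic part of the KMRR construction (in particular the step identifying $\tilde\mu$ with the graph joining requires care, since $\pi_\infty\colon X\to Z_\infty$ is only measurable and the choice of $z_\infty$ is not tied to $a$ by any a.e.-defined map; the relative-independence-over-$Z_\infty$ reduction needs the finite-level approximations you mention, done with precision).

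The paper's proof is much shorter and is essentially purely topological, not ergodic. It invokes \cite[Lemma 5.7]{Kra_Moreira_Richter_Robertson:2022} as a black box: this already produces a model $(\tilde X,\tilde\mu,\tilde T)$ measurably isomorphic to $(X,\mu,T)$ with a continuous $\pi\colon\tilde X\to X$ sending $\tilde a$ to $a$ and with \emph{all} finite-step pronilfactor maps $\tilde\pi_k\colon\tilde X\to Z_k(\tilde X)$ continuous simultaneously. The only new observation needed is then that, since each $\tilde\pi_k$ is continuous and the bonding maps $Z_{k+1}(\tilde X)\to Z_k(\tilde X)$ are continuous, $Z_\infty(\tilde X)$ is a \emph{topological} inverse limit of the $Z_k(\tilde X)$, and the universal property of topological inverse limits produces a continuous $\tilde\pi_\infty\colon\tilde X\to Z_\infty(\tilde X)$. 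No re-examination of relative independence, characteristic factors, or generic points is required once the KMRR lemma is applied. What the paper's route buys is brevity and robustness: the single hard step is outsourced to an already-proved lemma, and the passage to infinite step is a two-line inverse-limit argument. Your route, by contrast, would require replicating the KMRR proof essentially in full (with $Z_k$ replaced by $Z_\infty$ throughout), and you would have to spell out the finite-level approximation argument for the relative-independence step that you currently only gesture at.
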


\begin{proof}
    By \cite[Lemma 5.7]{Kra_Moreira_Richter_Robertson:2022} we can find $(\tilde{X}, \tilde \mu, \tilde T)$ which is measurably isomorphic to $(X, \mu,T)$, $\pi \colon \tilde X \to X$ is continuous, $\pi(\tilde a) = a$ and such that the maps $\tilde \pi_k \colon \tilde X \to Z_{k}(\tilde X)$ are continuous and $Z_{k}(\tilde X)$ is isomorphic to $Z_{k}(X)$ for every $k\in \N$. It follows by the definition of infinite-step pronilfactors that
    $Z_{\infty}(\tilde X)$ is measurably isomorphic to $Z_{\infty}(X)$. By the continuity of the factor maps $\tilde{\pi}_k$ it follows that $Z_{\infty}(\tilde X)$ is a topological inverse limit of $((Z_{k}(\tilde X),T))_{k\in \N}$. Hence by the universal property of topological inverse limits, there exists a continuous factor map $\tilde \pi_{\infty} \colon \tilde X \to Z_{\infty}(\tilde X) $, concluding the proof.
\end{proof}

\begin{proof}[Proof that Theorem \ref{topological pronilfactors 1} implies 
Theorem \ref{dynamical reform B}]
Fix an ergodic system $(X,\mu,T)$, a \Folner\ sequence $\Phi=(\Phi_N)_{N\in \N}$, a point $a\in \gen(\mu,\Phi)$ and an open set $E\subset X$ with $\mu(E)>0$. By \cref{reduction to topological pronilfactors is possible} there is an ergodic extension $(\tilde{X},\tilde{\mu},\tilde{T})$ with a continuous factor map $\pi: \Tilde{X} \to X$ that has topological infinite-step pronilfactor and there is a \Folner\ sequence $\tilde{\Phi}=(\tilde{\Phi}_N)_{N\in \N}$ and a point $\tilde{a}\in \gen(\tilde{\mu},\tilde{\Phi})$ such that $\pi(\tilde{a})=a$. Let $\tilde{\sigma}$ be the measure on $\tilde{X}^{\N_0}$ given by \cref{definition of sigma} and $\sigma$ on $X^{\N_0}$ be its pushforward under $\pi^{\infty}$ (equivalently, $\sigma$ on $X^{\N_0}$ is the unique extension with projections the compatible measures $\sigma_k$ on $X^{k+1}$ defined as the pushforwards of $\tilde{\sigma}_k$ by $\pi^{k+1}$ on $\tilde{X}^{k+1}$). Finally, let $\tilde{E}=\pi^{-1}(E)$. Applying \cref{topological pronilfactors 1} to $(\tilde{X},\tilde{\mu},\tilde{T})$ we find strictly increasing 
sequences $(b_n)_{n\in \N}, (s_k)_{k\in \N}, (t_k)_{k\in \N}\subset \N$ such 
that 
$$\tilde{\sigma}_{k+1}\left(\bigcap_{i=1}^k  \prod_{j=0}^{k+1}  \bigcap_{\ell=\max(j,1)}^{i+1}  T^{-\left( t_i+\ell s_i+ B_{k}^{\oplus \ell-j}  \right)} \tilde{ E}   \right)>0,$$
for every $k\in \N$, where $B_k=\{b_1,\ldots,b_k\}$. Note that $\tilde{\sigma}_{k+1}(\{\tilde{a}\}\times X^{k+1})=1$.
By the definition of $\tilde{\sigma}$ and the fact that $\pi$ is a factor map it follows that  
\begin{equation}\label{reduction eq 1}
\sigma_{k+1}\left(\bigcap_{i=1}^k  \prod_{j=0}^{k+1}  \bigcap_{\ell=\max(j,1)}^{i+1}  T^{-\left( t_i+\ell s_i+ B_{k}^{\oplus \ell-j}  \right)} E   \right)>0,
\end{equation}
for every $k\in \N$, and also that $\sigma_{k+1}(\{a\}\times X^{k+1})=1$, for every $k\in \N$. In particular, considering only the $0$-th coordinate in \eqref{reduction eq 1} we see that 
$$a\in \bigcap_{i=1}^k    \left(\bigcap_{\ell=1}^{i+1}  T^{-\left( t_i+\ell s_i+ B_{k}^{\oplus \ell}  \right)} E \right),$$
for every $k\in \N$. It follows that, for each $k\in \N$, $1\leq i\leq k$, and $1 \leq \ell \leq i$, 
$$a \in T^{-(t_i+\ell s_i + B_k^{\oplus \ell})}  E.  $$
As this holds for every $k\in \N$ we actually get that 
$$a \in T^{-(t_i+\ell s_i + B^{\oplus \ell})}E,  $$
for all $i\in \N$ and $1\leq \ell \leq i$, where $B=(b_n)_{n\in \N}$, which is the statement of \cref{dynamical reform B}.
\end{proof}

The proof that Theorem \ref{topological pronilfactors 2} implies Theorem
\ref{dynamical reform A} is very similar and thus is omitted.

\section{\cref{topological pronilfactors 2} via progressive measures.}

\subsection{The measure $\sigma$ is progressive}\label{sigma is progressive}

In this section we recover some statements analogous to the key intermediate 
results used in the proof of the main theorem in \cite{kmrr25}. These are 
exactly the tools we   use to prove that the measure $\sigma$ defined 
above is progressive. While some 
proofs are quite similar, we unfortunately cannot always cite the 
needed results from \cite{kmrr25}, since the measures we study are defined
via the infinite-step pronilfactor, whereas the analogous measures 
therein were defined via finite-step pronilfactors. 

Throughout this section, we let $(X,\mu,T)$ be an ergodic system with topological infinite-step pronilfactor and $\sigma\in \mathcal{M}(X^{\N_0})$ be the measure defined in \cref{definition of sigma}. The first result we need is the initialization step of the inductive proofs of \cref{topological pronilfactors 2} and \cref{topological pronilfactors 1}.
\begin{prop}[Initialization step]\label{Initialization-step}
    For any $k\in \N$ and any open set $E\subset X$ with $\mu(E)>0$, there exist infinitely many $n\in \N$ such that
    $$\sigma_k(X\times T_\Delta^n(E\times \cdots \times E))>0. $$
\end{prop}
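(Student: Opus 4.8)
The plan is to show that the non-negative numbers $c_n:=\sigma_k\big(X\times T_\Delta^n(E\times\cdots\times E)\big)$, where $T_\Delta=T\times\cdots\times T$ ($k$ factors), have a strictly positive Cesàro average; since $0\le c_n\le 1$ for all $n$, infinitely many $c_n$ are then positive, which is the claim. First I would translate $c_n$ into an integral on the infinite-step pronilfactor. Set $b:=\pi_\infty(a)$ and $\phi:=\E(\one_E\mid Z_\infty)$, so that $0\le\phi\le 1$ and $\int_{Z_\infty}\phi\,dm=\mu(E)>0$. Since $T_\Delta^n(E\times\cdots\times E)=(T^nE)^k$ is open, $\one_{T^nE}$ is an increasing limit of continuous functions, so \eqref{eq definition sigma}, monotone convergence (for $\sigma_k$ and for the conditional expectation), and $\E(\one_{T^nE}\mid Z_\infty)=\phi\circ T^{-n}$ give
\[
c_n=\int_{Z_\infty^{k}}(\phi\circ T^{-n})^{\otimes k}\,d\xi_k=\int_{Z_\infty^{k}}\phi^{\otimes k}\,d\lambda_n ,
\]
where $\xi_k=P_k^*\xi$ is as in \cref{xi_k is well-defined} and $\lambda_n$ is the pushforward of $\xi_k$ under $T_\Delta^{-n}$. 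I would also record, as a preliminary lemma, the fact underlying the bound $P_k\sigma_i\le i\mu$ from the remark after \cref{definition of sigma}: every coordinate marginal of $\xi_k$ is absolutely continuous with respect to $m$ with density at most $k$ (it is the invariant measure of an orbit closure under some $T^i$, $i\le k$); hence so is every coordinate marginal of $\lambda_n$ (using $T$-invariance of $m$) and of the measure $\nu$ appearing below. In particular $\phi^{\otimes k}$ is then well defined $\lambda_n$- and $\nu$-almost everywhere, independently of the chosen version of $\phi$.

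Next I would identify $\lim_{N\to\infty}\tfrac1N\sum_{n=0}^{N-1}\lambda_n$. Put $S_1:=T\times T^2\times\cdots\times T^k$ and $S_2:=T_\Delta$ on $Z_\infty^{k}$; these commute and are non-trivial, so by \cref{lemma Zd action in infinite set pronilsystem} the orbit closure $W$ of $(b,\dots,b)$ under the $\Z^2$-action $\langle S_1,S_2\rangle$ is minimal and uniquely ergodic, with invariant measure $\nu$. Using $\xi_k=\lim_M\tfrac1M\sum_{m<M}\delta_{S_1^m(b,\dots,b)}$ (the $d=1$ instance of the same proposition) one gets $\int g\,d\lambda_n=\lim_M\tfrac1M\sum_{m<M}g\big(S_1^mS_2^{-n}(b,\dots,b)\big)$ for $g\in C(Z_\infty^{k})$; averaging over $n<N$ and applying the unique ergodicity of $W$ along the \Folner\ sets $[0,M)\times[0,N)$ in $\Z^2$ gives $\lim_N\tfrac1N\sum_{n<N}\int g\,d\lambda_n=\int g\,d\nu$ for every $g\in C(Z_\infty^{k})$. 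To replace the continuous $g$ by the bounded, non-continuous function $\phi^{\otimes k}$, I would approximate $\phi$ in $L^1(m)$ by continuous $\phi_j$ with $0\le\phi_j\le 1$; a telescoping estimate combined with the marginal bound yields $\big|\int\phi^{\otimes k}\,d\lambda_n-\int\phi_j^{\otimes k}\,d\lambda_n\big|\le k^2\|\phi-\phi_j\|_{L^1(m)}$ uniformly in $n$, and the same for $\nu$, so that $\lim_N\tfrac1N\sum_{n<N}c_n=\int_{Z_\infty^{k}}\phi^{\otimes k}\,d\nu$.

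It remains to see that $\int_{Z_\infty^{k}}\phi^{\otimes k}\,d\nu>0$. I would note that the Cesàro averages $\tfrac1R\sum_{r<R}(S_1S_2^{-1})^r(\Delta m)$, with $\Delta\colon Z_\infty\to Z_\infty^{k}$ the diagonal embedding, converge weak$^*$ to a measure that is $\langle S_1,S_2\rangle$-invariant — each term is $S_2$-invariant and the limit is $(S_1S_2^{-1})$-invariant — and is supported on $W$, hence equals $\nu$. Testing on continuous functions, approximating $\phi$ as before, and using $(S_1S_2^{-1})^r(w,\dots,w)=(w,T^rw,\dots,T^{(k-1)r}w)$, this identifies
\[
\int_{Z_\infty^{k}}\phi^{\otimes k}\,d\nu=\lim_{R\to\infty}\frac1R\sum_{r=0}^{R-1}\int_{Z_\infty}\phi\cdot(\phi\circ T^{r})\cdot(\phi\circ T^{2r})\cdots(\phi\circ T^{(k-1)r})\,dm ,
\]
a Furstenberg average on the ergodic system $(Z_\infty,m,T)$. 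Choosing $\epsilon\in\big(0,\int\phi\,dm\big)$ we have $m(\{\phi>\epsilon\})>0$, and from $\phi\ge\epsilon\one_{\{\phi>\epsilon\}}$ the right-hand limit is at least $\epsilon^{k}\liminf_R\tfrac1R\sum_{r<R}m\big(\{\phi>\epsilon\}\cap T^{-r}\{\phi>\epsilon\}\cap\cdots\cap T^{-(k-1)r}\{\phi>\epsilon\}\big)$, which is positive by Furstenberg's multiple recurrence theorem. Hence $\lim_N\tfrac1N\sum_{n<N}c_n>0$ and the argument is complete.

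The step I expect to be the main obstacle is not any single estimate but the bookkeeping needed to move between the purely measurable object $\phi=\E(\one_E\mid Z_\infty)$ — which is all that \eqref{eq definition sigma} provides — and the topological, equidistribution-type statements on $Z_\infty$: a priori $\xi_k$ and $\nu$ can be singular with respect to $m^{\otimes k}$, so $\phi^{\otimes k}$ need not even be meaningfully defined $\nu$-a.e., and weak$^*$ convergence tells us nothing about it. The fix — and what I would set up carefully at the start — is the uniform absolute continuity, with density at most $k$, of every coordinate marginal of each of the measures $\xi_k$, $\lambda_n$, $\nu$; this is essentially the content of the $P_k\sigma_i\le i\mu$ remark, and it is what makes all the $L^1$-approximation arguments go through uniformly in $n$.
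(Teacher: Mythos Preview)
Your argument is correct and is precisely the route the paper sketches first --- the analog of \cite[Theorem~5.2]{kmrr25}, with the exchange-of-limits step handled via the unique ergodicity of \cref{lemma Zd action in infinite set pronilsystem}, and your careful marginal-bound bookkeeping supplying the passage from continuous test functions to $\phi^{\otimes k}$. The paper also records a second, self-contained derivation in \cref{new proof of initialization-step}, which deduces the initialization step from multiple right-progressiveness (applied with $k=1$) together with a single use of left-progressiveness, thereby bypassing the explicit Furstenberg multiple-recurrence computation.
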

A proof of this result relies on an analog of 
\cite[Theorem $5.2$]{kmrr25} in our setting. The proof of this is 
almost identical to the one in \cite{kmrr25}, except for the
justification of exchanging the order of limits in the averages analogous to
$(5.3)$ and $(5.4)$ therein, which in our context is possible via \cref{appendix lemam to interchange limits}. However, we can also recover \cref{Initialization-step} from a new key property of our measure $\sigma$, namely that it is multiple right-progressiveness, which is introduced in \cref{sec useful lemmas}. This self-contained proof of \cref{Initialization-step} can be found in \cref{new proof of initialization-step} below.

We   use the following result which extends \cite[Lemma $6.11$]{kmrr25} for infinite-step pronilsystems. 

\begin{lemma}\label{6.11 from kmrr25}
Let $(X,\mu,T)$ be an ergodic system, $a\in \gen(\mu,\Phi)$ for some \Folner\ sequence $\Phi=(\Phi_N)_{N\in \N}$ and denote by $(Z_{\infty},m,T)$ the infinite-step pronilfactor of this system. Assume $(Y,\nu,S)$ is an infinite-step pronilsystem. Then, for every $g\in C(X)$, $y\in Y$ and $F\in C(Y)$ we have that
\begin{equation}\label{equation of 6.11}
\limsup_{N\to \infty} \left| \frac{1}{|\Phi_N|} \sum_{n\in \Phi_N} g(T^na)F(S^ny)\right| \leq \norm{\mathbb{E}(g|Z_{\infty})}_{L^1(m)} \cdot \norm{F}_{\infty}.      
\end{equation}
\end{lemma}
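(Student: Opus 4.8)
\textbf{Proof proposal for Lemma~\ref{6.11 from kmrr25}.}

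The plan is to reduce the estimate to a statement about a joining of $(X,\mu,T)$ with the infinite-step pronilsystem $(Y,\nu,S)$, and then to exploit the fact that on such a joining the $X$-factor decouples modulo its own infinite-step pronilfactor $Z_\infty$. First I would let $\lambda$ be any weak* limit point of the empirical measures $\frac{1}{|\Phi_N|}\sum_{n\in\Phi_N}\delta_{(T^na,S^ny)}$ on $X\times Y$ along a subsequence realizing the $\limsup$; passing to such a subsequence we may assume the whole sequence converges to $\lambda$. Then $\lambda$ is a $T\times S$-invariant measure whose first marginal is $\mu$ (since $a\in\gen(\mu,\Phi)$) and whose second marginal is the unique $S$-invariant measure on $\overline{\{S^ny\}}$, which, since $Y$ is an (infinite-step) pronilsystem, is the Haar measure $\nu$ restricted to the orbit closure — in particular the pushforward of $\lambda$ to $Y$ is $\nu$ on that subnilsystem. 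The left-hand side of \eqref{equation of 6.11} equals $\left|\int g(x)F(y)\,d\lambda(x,y)\right|$.

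The key point is then that, because $(Y,\nu,S)$ is an infinite-step pronilsystem, the factor of $(X\times Y,\lambda,T\times S)$ generated by the $Y$-coordinate is contained in the infinite-step pronilfactor of the joining, and the infinite-step pronilfactor of the joining projects (on the $X$ side) into $Z_\infty(X)$. Concretely, I would argue that for every $k$ the Gowers--Host--Kra seminorm $\|g\otimes 1\|_{U^k(X\times Y,\lambda)}$ controls $\int g\otimes F\,d\lambda$ up to $\|F\|_\infty$ after subtracting $\E(g\mid Z_\infty(X))$: writing $g = \E(g\mid Z_\infty) + g'$ with $\|g'\|_{U^k(X,\mu,T)}=0$ for all $k$ (Proposition~\ref{infinite step nilfactor and HK seminorms}), one shows $\|g'\otimes 1\|_{U^k(X\times Y,\lambda)}=0$ for all $k$ as well, since the $U^k$ seminorm of a function depending only on the $X$-coordinate, computed in any joining with first marginal $\mu$, is bounded by its $U^k$ seminorm in $(X,\mu,T)$ — this is the standard monotonicity of uniformity seminorms under factor maps applied to the coordinate projection $X\times Y\to X$. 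Since $F$ depends only on the $Y$-coordinate and $Y$ is an infinite-step pronilsystem, $F\circ\pi_Y$ lies in the infinite-step pronilfactor of the joining, hence is measurable with respect to $\bigcap_k$-nontrivial-seminorm complement; therefore $\int g'\otimes F\,d\lambda = 0$. Consequently $\int g\otimes F\,d\lambda = \int \E(g\mid Z_\infty)\otimes F\,d\lambda$, and bounding $|F|\le\|F\|_\infty$ and using that the $X$-marginal of $\lambda$ is $\mu$ gives $\left|\int g\otimes F\,d\lambda\right|\le \|\E(g\mid Z_\infty)\|_{L^1(m)}\|F\|_\infty$, which is exactly \eqref{equation of 6.11}.

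The main obstacle I anticipate is making rigorous the claim that a function of the $Y$-coordinate alone, in the joining $\lambda$, is orthogonal to every function of the $X$-coordinate whose pushforward under $X\to Z_\infty(X)$ vanishes. The clean way is to invoke that $Y$ being an infinite-step pronilsystem forces the $Y$-factor of $(X\times Y,\lambda)$ to be an infinite-step pronilfactor of the joining, and that the infinite-step pronilfactor of a product system with an ergodic component is "spanned" by the infinite-step pronilfactors of the coordinates — or, more carefully, that $\E(F\circ\pi_Y\mid Z_\infty(X\times Y)) = F\circ\pi_Y$ while $\E(g'\circ\pi_X\mid Z_\infty(X\times Y))=0$ because $\|g'\circ\pi_X\|_{U^k(\lambda)}=0$ for all $k$. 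This last orthogonality is then just the Cauchy--Schwarz / Gowers inner product estimate: $|\int (g'\circ\pi_X)\cdot\overline{(F\circ\pi_Y)}\,d\lambda|\le \|g'\circ\pi_X\|_{U^k(\lambda)}\|F\circ\pi_Y\|_{U^k(\lambda)}$ for a suitable $k$ once we know $F\circ\pi_Y$ has finite $U^k$ norm, which it does since $Y$ is a pronilsystem so all its seminorms are norms and in particular finite. I would lift the precise seminorm-comparison lemmas from \cite{kmrr25} (their Lemma~6.11 is proved by exactly this scheme for finite-step pronilfactors) and check that the only modification needed is replacing "$Z_{k}$" and "$U^{k+1}$" by "$Z_\infty$" and "$U^k$ for all $k$" throughout, which is harmless because the monotonicity and joining inequalities used hold at every finite level uniformly.
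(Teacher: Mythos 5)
Your overall strategy matches the paper's proof exactly: pass to a joining $\lambda$ (the paper calls it $\rho$) of $(X,\mu,T)$ with the orbit closure of $y$ in $Y$, decompose $g = \E(g\mid Z_\infty) + g'$, show $g'\otimes\1$ has vanishing Gowers--Host--Kra seminorms in the joining, and use that $\1\otimes F$ lies in the infinite-step pronilfactor of the joining to conclude the cross-term vanishes. Two points, however, deserve correction.

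First, the inequality you invoke at the end,
\begin{equation*}
\Bigl|\int (g'\circ\pi_X)\cdot\overline{(F\circ\pi_Y)}\,d\lambda\Bigr| \le \|g'\circ\pi_X\|_{U^k(\lambda)}\,\|F\circ\pi_Y\|_{U^k(\lambda)},
\end{equation*}
is not a valid instance of Gowers--Cauchy--Schwarz: there is no general bound of an $L^2$ inner product by the product of $U^k$ seminorms (for $k=1$, taking $f=g=\1_A$ with $\mu(A)<1$ already violates it). The correct mechanism is precisely the one you write in the earlier parenthetical: by \cref{infinite step nilfactor and HK seminorms}, $\|g'\circ\pi_X\|_{U^k(\lambda)}=0$ for all $k$ forces $\E(g'\circ\pi_X\mid Z_\infty(X\times Y,\lambda))=0$, and since $F\circ\pi_Y$ is $Z_\infty(X\times Y,\lambda)$-measurable (because $Y$ is an infinite-step pronilsystem that is a factor of the joining), the integral vanishes. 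Drop the Cauchy--Schwarz framing; it is wrong as stated.

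Second, \cref{infinite step nilfactor and HK seminorms} is stated only for ergodic systems, and the joining $\lambda$ need not be ergodic. The paper handles this explicitly by passing to the ergodic decomposition of $\rho$ and proving the orthogonality $\int(g-\tilde g)\otimes F\,d\rho=0$ on each ergodic component (the first marginal of each component need not equal $\mu$, but the vanishing of the $U^k$ seminorms of $g'\otimes\1$ transfers to the components, which is what is needed). You gesture at this issue but do not resolve it; you should add the ergodic-decomposition step before applying the seminorm characterization.
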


\begin{proof}
We find a subsequence $(\Phi_{N_j})_{j\in \N}$ so that $(a,y)$ is generic along $(\Phi_{N_j})$ for some $(T\times S)$-invariant measure $\rho$ on $X\times Y$. If $\nu$ is an $S$-invariant measure on the pronilsystem $Y$ such that $y\in \gen(\nu,\Phi)$, since $a\in \gen(\mu,\Phi)$, we see that the first and second marginals of $\rho$ are $\mu$ and $\nu$, respectively. Letting $\tilde{g}=\mathbb{E}(g | Z_{\infty})$ we only have to prove that 
\begin{equation}\label{orthogonality for infinite step pronilfactors}
\int_{X\times Y} \left(g-\tilde{g}\right) \otimes F\ \diff \rho=0    
\end{equation}
Although $\rho$ may not be ergodic, it suffices to prove \eqref{orthogonality for infinite step pronilfactors} assuming $\rho$ is an ergodic measure and then using its ergodic decomposition.

Using \cref{infinite step nilfactor and HK seminorms} we have that 
$\norm{(g-\tilde{g}) \otimes \mathbbm{1}}_{U^k(X\times Y,\rho,T\times S)}=0$ for all $k\in \N$. By the same proposition we then see that $(g-\tilde{g}) \otimes \mathbbm{1}$ is orthogonal to the infinite-step pronilfactor of $(X\times Y,\rho,T\times S)$. On the other hand, $(Y,\nu,S)$ is a factor of $(X\times Y,\rho,T\times S)$, and so $\mathbbm{1} \otimes F$ is measurable with respect to the maximal factor of $(X\times Y,\rho,T\times S)$ that is an infinite-step pronilsystem, namely its infinite-step pronilfactor. The functions $(g-\tilde{g}) \otimes \mathbbm{1}$ and $\mathbbm{1} \otimes F$ are thus orthogonal in $L^2(\rho)$  and \eqref{orthogonality for infinite step pronilfactors} follows.
\end{proof}

Because each $\sigma_k$, for $k\geq 2$, is invariant under 
$Id\times T \times \dots \times T^k$ and its marginals satisfy 
${\sigma_k}_i \leq i \mu$, for $i=1,\ldots,k$, we can apply 
\cite[Theorem $6.6$]{kmrr25} directly. 

\begin{prop} \label{theorem 6.6 from kmrr}
Let $k\in \N$, with $k\geq 2$ and let $\Phi=(\Phi_N)_{N\in \N}$ be \Folner\ sequence in $\N$. Then, for any $f_1,\ldots,f_{k-1}\in L^{\infty}(\mu)$ and any bounded sequence $b:\N\to \C$ we have that 
\begin{multline*}
\limsup_{N\to \infty} \norm{\frac{1}{|\Phi_N|}\sum_{n\in \Phi_N} b(n) \cdot \left( \1 \otimes T^nf_1 \otimes \dots \otimes T^nf_{k-1} \otimes \1 \right)}_{L^2(\sigma_k)} \\
\leq C_k \cdot \norm{b}_{\infty} \cdot \min\{\norm{f_i}_{U^{k}(X,\mu,T)}: 1\leq i \leq k-1\},    
\end{multline*}
where $C_k>0$ is a constant that only depends on $k$.
\end{prop}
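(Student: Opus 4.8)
The plan is to deduce \cref{theorem 6.6 from kmrr} from \cite[Theorem 6.6]{kmrr25} by verifying that the measure $\sigma_k$ satisfies exactly the structural hypotheses required there. As noted in the paragraph immediately preceding the statement, $\sigma_k$ is invariant under $Id\times T\times\cdots\times T^k$, and its marginals $(\sigma_k)_i$ on the $i$-th coordinate satisfy $(\sigma_k)_i\leq i\mu$ for $i=1,\ldots,k$. These are precisely the conditions under which the general estimate of \cite[Theorem 6.6]{kmrr25} applies, since that result is stated for an arbitrary measure on $X^{k+1}$ with these two properties (invariance under the skew product map, and domination of the marginals by a multiple of $\mu$), and does \emph{not} use any further information about how $\sigma_k$ was constructed. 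In particular, the fact that our $\sigma_k$ is lifted from the infinite-step pronilfactor rather than a finite-step one is irrelevant for this step: the seminorm control $\norm{f_i}_{U^k(X,\mu,T)}$ appearing on the right-hand side refers to the Gowers--Host--Kra seminorms of the base system $(X,\mu,T)$, which are the same regardless of which pronilfactor we use to build the measure.

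First I would recall the statement of \cite[Theorem 6.6]{kmrr25} and observe that its hypotheses are met verbatim by $\tau=\sigma_k$. Then I would simply invoke it with the given functions $f_1,\ldots,f_{k-1}\in L^\infty(\mu)$ and bounded sequence $b\colon\N\to\C$, together with the \Folner\ sequence $\Phi$, to obtain
\begin{equation*}
\limsup_{N\to\infty}\norm{\frac{1}{|\Phi_N|}\sum_{n\in\Phi_N}b(n)\cdot\left(\1\otimes T^nf_1\otimes\cdots\otimes T^nf_{k-1}\otimes\1\right)}_{L^2(\sigma_k)}\leq C_k\cdot\norm{b}_\infty\cdot\min\{\norm{f_i}_{U^k(X,\mu,T)}:1\leq i\leq k-1\},
\end{equation*}
with $C_k>0$ depending only on $k$. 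The roles of the $0$-th coordinate and the last coordinate (both carrying the constant function $\1$) are exactly as in the cited theorem, so no reindexing subtlety arises.

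There is essentially no main obstacle here; the only point requiring a modicum of care is confirming that \cite[Theorem 6.6]{kmrr25} is genuinely stated in the generality of measures satisfying only these two structural properties, as opposed to being phrased specifically for the progressive measures on finite product spaces constructed in that paper. If the cited theorem is stated less abstractly, one would instead re-run its proof with $\sigma_k$ in place of the original measure; but the proof there uses only invariance under $Id\times T\times\cdots\times T^k$ together with the marginal bounds $(\sigma_k)_i\leq i\mu$ (the latter guaranteeing that the relevant van der Corput and mean-ergodic-theorem arguments can be carried out with $L^2(\mu)$-norm control of the $f_i$), so the adaptation is purely mechanical and introduces no new ideas. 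Hence I expect this proof to consist of a single sentence citing \cite[Theorem 6.6]{kmrr25} once the two structural properties of $\sigma_k$ have been recalled.
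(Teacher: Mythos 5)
Your proposal matches the paper's argument exactly: the paper likewise observes that $\sigma_k$ is invariant under $Id\times T\times\cdots\times T^k$ with marginals $(\sigma_k)_i\leq i\mu$, and then cites \cite[Theorem 6.6]{kmrr25} directly since that result is stated in precisely this generality. No further verification is needed, and your note about the seminorms referring to the base system $(X,\mu,T)$ correctly identifies why the choice of pronilfactor used to build $\sigma_k$ is irrelevant here.
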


This, in combination with \cref{6.11 from kmrr25}, together imply the analog of \cite[Theorem $6.2$]{kmrr25} for $\sigma_k$, by repeating the proof in \cite[Section $6.4$]{kmrr25} (we also use \cref{appendix lemam to interchange limits} that allows us to interchange the order of analogous limits). Namely, we have the following. 

\begin{prop}\label{analogue of 6.2}
Let $k\in \N$ with $k\geq 2$. For any function $F\in C(X^k)$ we have that
$$ \lim_{N\to \infty} \norm{\frac{1}{|\Phi_N|}\sum_{n\in \Phi_N} T_{\Delta}^n(F\otimes \1) - \frac{1}{|\Phi_N|}\sum_{n\in \Phi_N} T_{\Delta}^n(\1\otimes F) }_{L^2(\sigma_k)} =0.$$
\end{prop}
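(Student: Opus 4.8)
\textbf{Proof plan for Proposition \ref{analogue of 6.2}.} The plan is to reduce the claim to \cref{theorem 6.6 from kmrr} via a density/telescoping argument. First I would observe that it suffices to prove the statement for a dense class of functions $F \in C(X^k)$, namely finite linear combinations of tensor products $f_1 \otimes \cdots \otimes f_k$ with each $f_j \in C(X)$; by multilinearity and the triangle inequality the general case follows once we have a uniform bound. So fix $F = f_1 \otimes \cdots \otimes f_k$. The goal is to compare the averages of $T_\Delta^n(f_1 \otimes \cdots \otimes f_k \otimes \1)$ and $T_\Delta^n(\1 \otimes f_1 \otimes \cdots \otimes f_k)$ inside $L^2(\sigma_k)$, where $\sigma_k$ lives on $X^{k+1}$ and $T_\Delta = T \otimes \cdots \otimes T$ ($k+1$ factors).

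The main idea is a hybrid/telescoping argument: interpolate between the two expressions by shifting one coordinate at a time. For $j = 0, 1, \ldots, k$ consider the function $G_j = \1^{\otimes j} \otimes f_1 \otimes \cdots \otimes f_{k-j} \otimes \1^{\otimes (\text{appropriate padding})}$ — more carefully, one should set up the telescoping so that consecutive terms $G_j$ and $G_{j+1}$ differ only in that one block of the tensor has had a $T^n$-shift absorbed or a factor replaced by $\1$; the difference $T_\Delta^n G_j - T_\Delta^n G_{j+1}$ then has the form (up to shifting all coordinates, which is an isometry — here I would use the $Id \times T \times \cdots \times T^k$–invariance of $\sigma_k$ noted in the remark after \cref{definition of sigma}) of something like $\1 \otimes T^n h_1 \otimes \cdots \otimes T^n h_{k-1} \otimes \1$ with at least one $h_i$ equal to $f_i - \E(f_i \mid Z_\infty)$, or else $h_i$ that can be freely passed to its conditional expectation onto $Z_\infty$ at the cost of an error controlled by \cref{theorem 6.6 from kmrr}. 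The point is that by the definition of $\sigma_k$ in \eqref{eq definition sigma}, each coordinate $1, \ldots, k$ only sees functions through their conditional expectation onto $Z_\infty$, so replacing $f_i$ by $\E(f_i \mid Z_\infty)$ in any of those slots changes nothing; and once every slot is measurable with respect to $Z_\infty$, the averages over $\Phi_N$ of $T_\Delta^n(\text{nilfactor-measurable } F \otimes \1)$ and $T_\Delta^n(\1 \otimes \text{same})$ agree by \cref{appendix lemam to interchange limits} / \cref{lemma Zd action in infinite set pronilsystem}, which guarantees the relevant orbit-closure averages in the pronilsystem are well-defined and coincide (this is the analog of the commented-out Lemma \ref{lemma pseudo ergodic}, essentially the mean ergodic theorem on the ergodic nilsystem $(Z_\infty^k, \xi_k, \tilde T)$ combined with the $\1$–coordinate being carried by unique ergodicity of orbit closures).

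Concretely the steps are: (1) reduce to $F = f_1 \otimes \cdots \otimes f_k$ by density and multilinearity; (2) using the definition of $\sigma_k$, assume WLOG that each $f_j$ is of the form $\tilde g_j \circ \pi_\infty$ with $\tilde g_j \in C(Z_\infty)$ — any component orthogonal to $Z_\infty$ contributes $0$ to both sides because \cref{theorem 6.6 from kmrr} bounds the $L^2(\sigma_k)$-norm of the relevant averages by $\min_i \|f_i\|_{U^k}$, which vanishes when $\E(f_i \mid Z_\infty) = 0$; (3) for nilfactor-measurable $F$, identify both averages with an average of the $\tilde T$-orbit of the point $(\pi_\infty(a), \ldots, \pi_\infty(a))$ in $Z_\infty^k$ pushed into $X^{k+1}$ — more precisely both sides compute the same integral $\int \1 \otimes F \, d\sigma_k$ in the limit, by Von Neumann's mean ergodic theorem applied to the ergodic system $(Z_\infty^k, \xi_k, \tilde T)$ exactly as in the (commented-out) proof of Lemma \ref{lemma pseudo ergodic}; (4) conclude that the $L^2(\sigma_k)$-norm of the difference tends to $0$ on the dense class, and then everywhere by the uniform bound from step (2).

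The main obstacle I anticipate is step (3): making precise that the $L^2(\sigma_k)$-limit of $\frac{1}{|\Phi_N|}\sum_n T_\Delta^n(F \otimes \1)$ equals the constant $\int \1 \otimes F\, d\sigma_k$ when $F$ is nilfactor-measurable. The subtlety is that the $0$-th coordinate of $\sigma_k$ is the Dirac mass $\delta_a$ while the other $k$ coordinates are governed by $\xi_k$ lifted from $Z_\infty$; one must check that the $\1$ placed in the $0$-th (resp. last) coordinate genuinely does not interfere, and that the convergence is in $L^2(\sigma_k)$ and not merely weakly. This is handled by pulling everything back through the continuous factor map $\pi_\infty$ and invoking unique ergodicity of the relevant orbit closure in $Z_\infty^k$ (\cref{lemma Zd action in infinite set pronilsystem} with $d=1$, $S_1 = \tilde T$), which upgrades the convergence appropriately; the invariance of $\sigma_k$ under $Id \times T \times \cdots \times T^k$ is what lets us move the $\1$ from the front to the back. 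Modulo this, the argument is a faithful transcription of \cite[Section 6.4]{kmrr25} with $Z_{k-1}$ replaced by $Z_\infty$ throughout.
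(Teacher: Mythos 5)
Your overall strategy is the same one the paper follows (repeat Section 6.4 of \cite{kmrr25} with $Z_{k-1}$ replaced by $Z_\infty$), but step (2) has a real gap: you claim that the reduction to $Z_\infty$-measurable $F$ is handled by \cref{theorem 6.6 from kmrr} alone. That proposition only controls averages of the form $\frac{1}{|\Phi_N|}\sum b(n)\,(\1 \otimes T^n f_1 \otimes \cdots \otimes T^n f_{k-1} \otimes \1)$, i.e.\ it bounds contributions from functions sitting in coordinates $1,\ldots,k-1$. In $\frac{1}{|\Phi_N|}\sum T_\Delta^n(F\otimes\1)$ the function $f_1$ occupies the $0$-th coordinate (it enters only through the bounded sequence $b(n)=f_1(T^n a)$), and in $\frac{1}{|\Phi_N|}\sum T_\Delta^n(\1\otimes F)$ the function $f_k$ occupies the $k$-th coordinate; neither of these positions is covered by \cref{theorem 6.6 from kmrr}. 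In particular, if you decompose $f_1 = \E(f_1\mid Z_\infty) + h_1$, the term carrying $h_1$ in coordinate $0$ produces an average $\frac{1}{|\Phi_N|}\sum h_1(T^n a)\,(T^n g_2 \otimes \cdots \otimes T^n g_k \otimes \1)$ with $g_i = \E(f_i\mid Z_\infty)$; \cref{theorem 6.6 from kmrr} bounds this by $\min_i \lVert g_i\rVert_{U^k}$, which has no reason to vanish. This is exactly what \cref{6.11 from kmrr25} (the infinite-step analogue of \cite[Lemma 6.11]{kmrr25}) is for: it controls the correlation of the scalar sequence $g(T^n a)$ with a pronilsystem orbit by $\lVert\E(g\mid Z_\infty)\rVert_{L^1(m)}$, so it kills precisely this residual term. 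The paper explicitly lists \cref{6.11 from kmrr25} as an ingredient and you never invoke it; your ``main obstacle'' paragraph addresses a different issue (step (3), after the reduction), so the gap remains.

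A smaller but related point: your sentence ``by the definition of $\sigma_k$, each coordinate $1,\ldots,k$ only sees functions through their conditional expectation onto $Z_\infty$, so replacing $f_i$ by $\E(f_i\mid Z_\infty)$ in any of those slots changes nothing'' is true for the \emph{integral} of a single tensor product $f_0\otimes\cdots\otimes f_k$ against $\sigma_k$, but not for the $L^2(\sigma_k)$-norm you actually need to control. Expanding $\lVert A_N - B_N\rVert^2_{L^2(\sigma_k)}$ produces products $T^n f_i\cdot\overline{T^m f_j}$ in each slot, and $\E(\,\cdot\mid Z_\infty)$ does not factor over products, so the replacement is not ``free''; it must be earned via the seminorm estimates (Cauchy--Schwarz plus van der Corput, packaged in \cref{theorem 6.6 from kmrr}) for the middle coordinates and via \cref{6.11 from kmrr25} for the endpoint coordinate carrying the orbit of $a$.
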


Finally, \cite[Theorem $6.1$]{kmrr25} also applies for each measure 
$\sigma_k$, $k\in \N$, (more precisely, for the projection of $\sigma_k$ to
its last $k$-coordinates) because the only assumption there is that the 
measure is $(T\times T^2 \times \dots \times T^k)-$invariant. 

\begin{prop}\label{recurrence-prop-k-2}
    Let $k\in \N$ with $k\geq 2$. Let $F\in C(X^k)$ be a nonnegative function and $\Phi=(\Phi_N)_{N\in \N}$ be a \Folner\ sequence in $\N$. Then
    $$\int \1 \otimes F d\sigma_k>0 \Longrightarrow \liminf_{N\to\infty}\frac{1}{|\Phi_N|} \sum_{n\in \Phi_N} \int T_\Delta^n(\1 \otimes F) \cdot (\1\otimes F) \diff\sigma_k>0.$$ 
\end{prop}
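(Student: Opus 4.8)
\textbf{Proof proposal for Proposition \ref{recurrence-prop-k-2}.}

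The plan is to reduce the statement to Furstenberg's classical multiple recurrence theorem applied to the system $(X^k, \pi_k\sigma, T_\Delta)$, where $\pi_k\sigma$ denotes the projection of $\sigma_k$ onto its last $k$ coordinates (i.e.\ forgetting the $0$-th coordinate). The key point is that this projected measure is invariant under $T_\Delta = T\times\cdots\times T$: indeed, by the remark following \cref{definition of sigma}, $\sigma_k$ is invariant under $Id\times T\times\cdots\times T^k$, and one checks directly from the defining formula \eqref{eq definition sigma} (via the fact that $P_k^*\xi$ is $T^{\otimes k}$-invariant on $Z_\infty^k$, which follows from unique ergodicity of orbits under the diagonal action) that the last-$k$-coordinate marginal is $T_\Delta$-invariant. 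Write $\nu$ for this $T_\Delta$-invariant Borel probability measure on $X^k$; the hypothesis $\int \1\otimes F\,\diff\sigma_k>0$ becomes $\int F\,\diff\nu>0$.

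First I would fix a closed set $D\subset X^k$ with $\nu(D)>0$ on which $F$ is bounded below by some $c>0$; such a $D$ exists by inner regularity and the fact that $F$ is a nonnegative continuous function with positive integral. Then by Furstenberg's multiple recurrence theorem (in the form giving a positive lower density of return times, e.g.\ \cite[Chapter 7]{Host_Kra_nilpotent_structures_ergodic_theory:2018} or the original Furstenberg result) applied to the single transformation $T_\Delta$ on $(X^k,\nu)$ — here only a \emph{single} transformation is involved, so this is in fact just the \Poincare\ recurrence theorem with a \Folner-average lower bound — we get
\begin{equation*}
    \liminf_{N\to\infty}\frac{1}{|\Phi_N|}\sum_{n\in\Phi_N}\nu\bigl(D\cap T_\Delta^{-n}D\bigr)>0.
\end{equation*}
On the set $D\cap T_\Delta^{-n}D$ the integrand $T_\Delta^n F\cdot F = (F\circ T_\Delta^n)\cdot F$ is at least $c^2$, so $\int T_\Delta^n(F)\cdot F\,\diff\nu \geq c^2\,\nu(D\cap T_\Delta^{-n}D)$, and translating back through $\1\otimes(\cdot)$ and $\sigma_k$ yields exactly the claimed conclusion. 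Note the averaging \Folner\ sequence $\Phi$ is immaterial since we only need a positive $\liminf$; any \Folner\ sequence works because the relevant averages can be compared to the genuine density of the set of recurrence times, or one simply invokes the \Folner-averaged form of \Poincare\ recurrence directly.

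The only genuine step requiring care — and the one I'd expect to be the main (minor) obstacle — is verifying cleanly that the last-$k$-coordinate marginal of $\sigma_k$ is $T_\Delta$-invariant rather than merely $T\times T^2\times\cdots\times T^k$-invariant; this is where the structure of $\sigma_k$ as a lift from $Z_\infty$ with conditional expectations $\E(f_i\mid Z_\infty)$ enters, and one uses that on $Z_\infty^k$ the measure $P_k^*\xi$ is the unique ergodic measure for the diagonal $\Z$-action on the relevant orbit closure, hence invariant under $T^{\otimes k}$ as well. (In fact \cite[Theorem 6.1]{kmrr25} is cited precisely because it isolates this: its only hypothesis is $(T\times\cdots\times T^k)$-invariance, but its proof is really about the diagonal action, so the cleanest route is to simply invoke it as the authors indicate.) Everything else is a routine application of \Poincare\ recurrence and inner regularity.
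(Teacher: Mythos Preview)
Your proposed reduction to \Poincare\ recurrence has a genuine gap: the last-$k$-coordinate marginal $\nu=P_k^*\sigma$ is \emph{not} $T_\Delta$-invariant in general, and neither is $P_k^*\xi$ on $Z_\infty^k$. Take the Kronecker case $Z_\infty=\T$, $Tx=x+\alpha$ with $\alpha$ irrational, and $a=0$. Then $\xi_2$ is the Haar measure on the one-dimensional subtorus $\{(x,2x):x\in\T\}$, while $(T\times T)_*\xi_2$ is supported on $\{(y,2y-\alpha):y\in\T\}$, a different coset. Unique ergodicity of the $\tilde T=T\times T^2\times\cdots\times T^k$ orbit closure gives you nothing about $T_\Delta$-invariance, because $T_\Delta$ does not even preserve that orbit closure. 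So the step ``this is in fact just the \Poincare\ recurrence theorem'' fails outright.

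The actual argument behind \cite[Theorem~6.1]{kmrr25} (and its generalisation, \cref{T-Delta-Furstenberg} here) works only with the $\tilde T$-invariance that $\nu$ genuinely has, together with the \emph{product} structure of the relevant set. One first reduces to a product set $A=A_1\times\cdots\times A_k$ with $\nu(A)>0$ on which $F\geq c$ (using that open sets in $X^k$ are countable unions of open boxes). With $c=\mathrm{lcm}(1,\ldots,k)$ one has, coordinate by coordinate, $T_\Delta^{-cn}(X^{j-1}\times A_j\times X^{k-j})=\tilde T^{-(c/j)n}(X^{j-1}\times A_j\times X^{k-j})$, whence
\[
\nu\bigl(A\cap T_\Delta^{-cn}A\bigr)\ \geq\ \nu\Bigl(A\cap\bigcap_{j=1}^k \tilde T^{-(c/j)n}A\Bigr),
\]
and the right-hand side is governed by Furstenberg's \emph{multiple} recurrence theorem for the $\tilde T$-preserving system $(X^k,\nu)$. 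This is why \cref{quantitative recurrence-prop-k-2} explicitly references multiple recurrence rather than \Poincare. Your final parenthetical (``just invoke \cite[Theorem~6.1]{kmrr25}'') is the correct move, but the route you sketch to get there is not valid.
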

\begin{remark}\label{quantitative recurrence-prop-k-2}
In fact, it follows from the proof of \cite[Theorem $6.1$]{kmrr25}, and in particular from the use of Furstenberg's 
multiple recurrence theorem, that in the notation of the proposition, for any $\epsilon>0$ there is $\delta>0$ so that 
$$\int \1 \otimes F d\sigma_k> \epsilon \Longrightarrow \liminf_{N\to\infty}\frac{1}{|\Phi_N|} \sum_{n\in \Phi_N} \int T_\Delta^n(\1 \otimes F) \cdot (\1\otimes F) \diff\sigma_k>\delta.$$
\end{remark}
As shown in \cite[Section $6.1$]{kmrr25}, these last two results together 
imply that each $\sigma_k$, $k\geq 2$, is left-progressive (see also \cref{recurrence-prop-k}) according to \cref{definition left-right progressive measure}. Since $P_k\sigma=\sigma_k$, we have shown the following. 

\begin{prop}\label{left-progressive proof}
The measure $\sigma$ defined via \eqref{eq definition sigma} is 
left-progressive.    
\end{prop}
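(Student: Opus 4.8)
The plan is to show that each $\sigma_k$, for $k\geq 2$, is left-progressive in the sense of \cref{definition left-right progressive measure}, and then invoke the fact that $P_k\sigma=\sigma_k$ together with \cref{definition progressive measures in XN} to conclude that $\sigma$ is left-progressive. So fix $k\geq 2$ and open sets $U_1,\dots,U_k\subset X$ with $\sigma_k(X\times U_1\times\cdots\times U_k)>0$. We must produce infinitely many $n\in\N$ with
\begin{equation*}
\sigma_k\big((X\times U_1\times\cdots\times U_k)\cap (T\times\cdots\times T)^{-n}(U_1\times\cdots\times U_k\times X)\big)>0.
\end{equation*}
First I would pass from open sets to continuous functions: choose a nonnegative $F\in C(X^k)$ supported in $U_1\times\cdots\times U_k$ with $\int \1\otimes F\,\diff\sigma_k>0$ (possible since $\sigma_k(X\times U_1\times\cdots\times U_k)>0$ and $\sigma_k$ is a Borel measure on a compact metric space). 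Then $(\1\otimes F)\cdot T_\Delta^n(\1\otimes F)$ is a nonnegative continuous function dominated by the indicator of $(X\times U_1\times\cdots\times U_k)\cap T_\Delta^{-n}(X\times U_1\times\cdots\times U_k)$, so it suffices to find infinitely many $n$ for which $\int T_\Delta^n(\1\otimes F)\cdot(\1\otimes F)\,\diff\sigma_k>0$ and then to note that, because $\sigma_k$ is invariant under $\mathrm{Id}\times T\times\cdots\times T^k$, the set $(X\times U_1\times\cdots\times U_k)\cap T_\Delta^{-n}(X\times U_1\times\cdots\times U_k)$ has the same $\sigma_k$-measure as $(X\times U_1\times\cdots\times U_k)\cap (T\times\cdots\times T)^{-n}(U_1\times\cdots\times U_k\times X)$ — indeed applying $\mathrm{Id}\times T\times\cdots\times T^k$ to the diagonal shift $n$ times does not literally give the shifted-by-one-coordinate set, so I would instead argue directly with $\1\otimes F$ on the last $k$ coordinates: the projection $\pi$ of $\sigma_k$ onto coordinates $1,\dots,k$ is $(T\times\cdots\times T^k)$-invariant, and $F$ pulled back to these coordinates together with the argument below will produce the recurrence in the correct coordinates, following verbatim the reduction in \cite[Section 6.1]{kmrr25}.

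The core analytic input is to combine \cref{recurrence-prop-k-2} with \cref{analogue of 6.2}. By \cref{recurrence-prop-k-2}, since $\int\1\otimes F\,\diff\sigma_k>0$, we have
\begin{equation*}
\liminf_{N\to\infty}\frac{1}{|\Phi_N|}\sum_{n\in\Phi_N}\int T_\Delta^n(\1\otimes F)\cdot(\1\otimes F)\,\diff\sigma_k>0,
\end{equation*}
so in particular the integrand is strictly positive for infinitely many $n$. To land in the left-progressive configuration (where the shifted factor is $U_1\times\cdots\times U_k\times X$ rather than $X\times U_1\times\cdots\times U_k$), I would use \cref{analogue of 6.2}: the $L^2(\sigma_k)$-distance between the averages of $T_\Delta^n(F\otimes\1)$ and $T_\Delta^n(\1\otimes F)$ tends to $0$, which lets one replace $T_\Delta^n(\1\otimes F)$ by $T_\Delta^n(F\otimes\1)$ inside the Cesàro average of inner products with $(\1\otimes F)$ without changing the positivity of the $\liminf$; this is exactly the manoeuvre in \cite[Section 6.1]{kmrr25}. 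The resulting positive inner products $\int T_\Delta^n(F\otimes\1)\cdot(\1\otimes F)\,\diff\sigma_k>0$ for infinitely many $n$ say precisely that $\sigma_k$ gives positive mass to $(X\times U_1\times\cdots\times U_k)\cap(T\times\cdots\times T)^{-n}(U_1\times\cdots\times U_k\times X)$, which is the definition of left-progressiveness.

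The only remaining point is bookkeeping: \cref{recurrence-prop-k-2} and \cref{analogue of 6.2} are stated for a fixed \Folner\ sequence $\Phi$, and we should make sure the "infinitely many $n$" conclusion is genuinely uniform — but this is immediate, since a Cesàro average along $\Phi$ with positive $\liminf$ forces infinitely many positive terms regardless of $\Phi$. Finally, since this holds for every $k\geq 2$ and $P_k\sigma=\sigma_k$, \cref{definition progressive measures in XN} gives that $\sigma$ is left-progressive. I do not expect a serious obstacle here: all the hard work (the seminorm estimate \cref{theorem 6.6 from kmrr}, the recurrence \cref{recurrence-prop-k-2}, and the symmetry \cref{analogue of 6.2}) has already been established, and the proof is a transcription of \cite[Section 6.1]{kmrr25} with $\sigma_k$ in place of the finite-step measures there; the mild care needed is only in tracking which coordinates carry the indicator functions, ensuring the shift lands on the correct block of coordinates.
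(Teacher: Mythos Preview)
Your approach is correct and essentially identical to the paper's: both combine \cref{recurrence-prop-k-2} and \cref{analogue of 6.2} exactly as in \cite[Section~6.1]{kmrr25}, then use $P_k\sigma=\sigma_k$ to conclude. The paper merely cites that reference, while you spell out the mechanism (pass to a continuous $F$, apply recurrence, then swap $T_\Delta^n(\1\otimes F)$ for $T_\Delta^n(F\otimes\1)$ via \cref{analogue of 6.2}); your brief false start about invariance is harmless since you immediately abandon it for the correct coordinate-shift argument.
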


As a concluding remark, we mention that in fact (this is implicit in 
\cite{kmrr25}) the proof of \cref{left-progressive proof} actually gives the following result (which we do not need to use here).

\begin{prop}\label{recurrence-prop-k}
    Let $F\in C(X^k)$ be a nonnegative function with $\int \1 \otimes F \diff \sigma_k>0$. Then, 
    $$\liminf_{N\to\infty} \frac{1}{|\Phi_N|} \sum_{n\in \Phi_N} \int (T_\Delta^n F(a,x_1,\ldots,x_{k-1}) \otimes \1) \cdot (\1 \otimes F(x_1,\ldots,x_k)) \diff \sigma_k(x)>0.$$
\end{prop}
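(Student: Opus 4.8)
The plan is to derive \cref{recurrence-prop-k} from the three facts already assembled for the proof of \cref{left-progressive proof}: the multiple recurrence statement \cref{recurrence-prop-k-2}, the limit identity \cref{analogue of 6.2}, and the observation that, by \cref{definition of sigma}, the projection of $\sigma_k$ onto its $0$-th coordinate is the Dirac mass $\delta_a$. So fix $k\geq 2$ and a nonnegative $F\in C(X^k)$ with $\int \1\otimes F\diff\sigma_k>0$.

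First I would invoke \cref{recurrence-prop-k-2} to obtain
\[
\liminf_{N\to\infty}\frac{1}{|\Phi_N|}\sum_{n\in\Phi_N}\int T_\Delta^n(\1\otimes F)\cdot(\1\otimes F)\diff\sigma_k>0 .
\]
Next I would replace $T_\Delta^n(\1\otimes F)$ by $T_\Delta^n(F\otimes\1)$ inside the average. Moving the Ces\`aro average over $\Phi_N$ inside the integral and applying Cauchy--Schwarz in $L^2(\sigma_k)$, the resulting error is at most
\[
\norm{\frac{1}{|\Phi_N|}\sum_{n\in\Phi_N}\bigl(T_\Delta^n(F\otimes\1)-T_\Delta^n(\1\otimes F)\bigr)}_{L^2(\sigma_k)}\cdot\norm{\1\otimes F}_{L^2(\sigma_k)} ,
\]
whose first factor tends to $0$ by \cref{analogue of 6.2} and whose second factor is bounded by $\norm{F}_\infty$. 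Hence
\[
\liminf_{N\to\infty}\frac{1}{|\Phi_N|}\sum_{n\in\Phi_N}\int T_\Delta^n(F\otimes\1)\cdot(\1\otimes F)\diff\sigma_k>0 .
\]
Finally I would use that the $0$-th marginal of $\sigma_k$ is $\delta_a$, so that $x_0=a$ for $\sigma_k$-a.e.\ $x=(x_0,\dots,x_k)$ and therefore, $\sigma_k$-a.e.,
\[
T_\Delta^n(F\otimes\1)(x)=F(T^na,T^nx_1,\dots,T^nx_{k-1})=\bigl(T_\Delta^nF(a,x_1,\dots,x_{k-1})\otimes\1\bigr)(x) .
\]
Substituting this identity into the last display yields exactly the claimed inequality.

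I do not expect a genuine obstacle here; the argument is essentially bookkeeping with results proved or cited above, and indeed this statement is precisely the intermediate step in the derivation of \cref{left-progressive proof} from \cref{analogue of 6.2} and \cref{recurrence-prop-k-2}. The only point requiring a little care is the order of operations in the Ces\`aro averages: one must pass the average over $\Phi_N$ inside the integral before applying Cauchy--Schwarz, since \cref{analogue of 6.2} controls the $L^2(\sigma_k)$-norm of the \emph{averaged} difference rather than of the individual terms $T_\Delta^n(F\otimes\1)-T_\Delta^n(\1\otimes F)$. Once that is done, the $\delta_a$-marginal identification of $T_\Delta^n(F\otimes\1)$ with $T_\Delta^nF(a,\cdot)\otimes\1$ is immediate, and no further input is needed.
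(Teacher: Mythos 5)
Your argument is correct and matches the route the paper itself indicates (the paper only sketches this, remarking that it is implicit in \cite{kmrr25}): combine \cref{recurrence-prop-k-2} with \cref{analogue of 6.2}, passing the Ces\`aro average inside the integral before Cauchy--Schwarz, and then use that $\sigma_k$-a.e.\ $x_0=a$ to rewrite $T_\Delta^n(F\otimes\1)$ as $T_\Delta^n F(a,x_1,\dots,x_{k-1})\otimes\1$. No gaps.
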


Next we show that $\sigma$ is right-progressive.

\begin{prop}\label{right-progressive proof}
The measure $\sigma$ defined via \eqref{eq definition sigma} is 
right-progressive.    
\end{prop}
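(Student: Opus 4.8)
\textbf{Proof strategy for \cref{right-progressive proof}.}

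The plan is to mirror the structure used to establish left-progressiveness (\cref{left-progressive proof}), but now pushing recurrence into the ``forward'' direction on the coordinates $1,\dots,k-1$ rather than dragging them back toward the $0$-th coordinate. Concretely, fix $k \geq 2$ and open sets $U_1,\dots,U_{k-1} \subset X$ with $\sigma_k(X \times U_1 \times \cdots \times U_{k-1} \times X) > 0$. Since $\sigma_k$ is a Borel measure and the $U_i$ are open, we may pick continuous nonnegative functions $f_i$ with $0 \le f_i \le \ind{U_i}$ so that $F := f_1 \otimes \cdots \otimes f_{k-1} \otimes \1 \in C(X^k)$ (viewed as a function of the last $k$ coordinates, i.e. coordinates $1,\dots,k$) satisfies $\int \1 \otimes F \, d\sigma_k > 0$. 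We want infinitely many $n$ with
$$\sigma_k\big((X \times U_1 \times \cdots \times U_{k-1} \times X) \cap (T\times\cdots\times T)^{-n}(X \times X \times U_1 \times \cdots \times U_{k-1})\big) > 0.$$
In functional terms, writing $G = \1 \otimes f_1 \otimes \cdots \otimes f_{k-1}$ (now occupying coordinates $2,\dots,k$), it suffices to find infinitely many $n$ with $\int (\1\otimes F) \cdot T_\Delta^n(\1 \otimes G) \, d\sigma_k > 0$, where $T_\Delta = T^{\otimes k}$ acting on all $k$ of the last coordinates; here we have used $\1\otimes G$ has a harmless extra $\1$ in coordinate $1$, and $T_\Delta^n(\1\otimes F)=\1\otimes F$ only in the $0$-th slot. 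The key point is that $G$ is, up to the leading $\1$, a \emph{shift} of $F$: if we let $H \in C(X^k)$ be the function on coordinates $1,\dots,k$ equal to $f_1 \otimes \cdots \otimes f_{k-1} \otimes \1$, then $\1 \otimes F = \1 \otimes H$ and $\1\otimes G$ is obtained from $\1\otimes H$ by shifting the block of coordinates by one position to the right.

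The mechanism to exploit this is \cref{analogue of 6.2}: for any $F' \in C(X^k)$,
$$\lim_{N\to\infty}\Big\|\tfrac{1}{|\Phi_N|}\sum_{n\in\Phi_N} T_\Delta^n(F'\otimes\1) - \tfrac{1}{|\Phi_N|}\sum_{n\in\Phi_N}T_\Delta^n(\1\otimes F')\Big\|_{L^2(\sigma_k)} = 0.$$
Applying this with $F'$ chosen so that $\1 \otimes F'$ realizes $F$ on the appropriate coordinates lets me replace the ``forward-shifted'' average by a ``backward-shifted'' one (or vice versa), effectively converting the right-progressive recurrence question into the left-progressive / diagonal-recurrence framework where \cref{recurrence-prop-k-2} applies. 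Then \cref{recurrence-prop-k-2} (Furstenberg multiple recurrence for the $(T\times\cdots\times T^k)$-invariant projection of $\sigma_k$) gives $\liminf_N \tfrac{1}{|\Phi_N|}\sum_{n\in\Phi_N}\int T_\Delta^n(\1\otimes F)\cdot(\1\otimes F)\,d\sigma_k > 0$, which in particular forces positivity of the integrand for infinitely many $n$. Combining the $L^2$-comparison with the positivity of the Cesàro average and a Cauchy–Schwarz estimate to control the error term (exactly as in \cite[Section 6.1]{kmrr25}), one concludes that $\int (\1\otimes F)\cdot T_\Delta^n(\1 \otimes G)\,d\sigma_k > 0$ for infinitely many $n$, which by the choice of the $f_i$ yields the desired positivity of $\sigma_k$ on the intersection of the open boxes. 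Since $P_k\sigma = \sigma_k$ for every $k \ge 2$, this shows $\sigma$ is right-progressive.

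The main obstacle I anticipate is bookkeeping the coordinate shifts correctly: right-progressiveness asks to compare the box on coordinates $1,\dots,k-1$ with its image shifted to coordinates $2,\dots,k$, and one has to check that the ``phantom'' $X$ factors (the $0$-th coordinate, which is frozen at $\delta_a$, and the final coordinate) do not obstruct the application of \cref{analogue of 6.2} and \cref{recurrence-prop-k-2} — both of which are statements about the last $k$ coordinates carrying the $(T\times\cdots\times T^k)$-action, with the $0$-th coordinate merely contributing a constant factor $\1$. A secondary point requiring care is that \cref{recurrence-prop-k-2} is stated for nonnegative $F \in C(X^k)$ with $\int \1\otimes F\,d\sigma_k>0$; one must verify that the tensor $f_1\otimes\cdots\otimes f_{k-1}\otimes\1$ built from the approximating functions indeed has positive integral against $\1\otimes(\cdot)$, which follows because $\sigma_k(X\times U_1\times\cdots\times U_{k-1}\times X)>0$ and the $f_i$ can be taken with $\int f_i\,d(\text{marginal}) $ arbitrarily close to the measure of $U_i$. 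Everything else is a routine repetition of the arguments already carried out in \cite{kmrr25} and reproduced above for the left-progressive case.
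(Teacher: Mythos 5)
Your proposal matches the paper's proof essentially verbatim: both approximate the open box $U_1\times\cdots\times U_{k-1}$ from below by a nonnegative continuous function, apply \cref{analogue of 6.2} to replace the right-shifted tensor $\1\otimes\1\otimes(\cdot)$ in the Ces\`aro average by its left-shifted counterpart $\1\otimes(\cdot)\otimes\1$, and then invoke \cref{recurrence-prop-k-2} to get positivity on a set of positive lower density of $n$. The only cosmetic difference is that the paper works with a general $G\in C(X^{k-1})$ with $0\le G\le\ind{V}$, whereas you restrict to a tensor-product approximation $f_1\otimes\cdots\otimes f_{k-1}$; both are legitimate (your version needs the small extra observation that one can choose compacts $K_i\subset U_i$ whose product still carries positive $\sigma_k$-mass, which follows from inner regularity of the marginals).
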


\begin{proof}
Let $k\in \N$ with $k\geq 2$ be fixed. Let $U_1,\ldots,U_{k-1}\subset X$ be open sets with 
$$\sigma_k(X \times U_1 \times U_2 \times \dots \times U_{k-1}\times X)>0.$$
As $V=U_1\times \dots \times U_{k-1} \subset X^{k-1}$ is an open set, we can find a function $G\in C(X^{k-1})$ such that $0\leq G \leq \mathbbm{1}_{V}$ and $\int_{X^{k+1}} \1 \otimes G \otimes \mathbbm{1} \diff \sigma_k>0.$ We thus have 
\begin{multline*}
\liminf_{N\to \infty} \frac{1}{|\Phi_N|} \sum_{n\in \Phi_N} \sigma_k(( X \times V \times X) \cap T_{\Delta}^{-n}(X \times X \times V)) \\ \geq \liminf_{N\to \infty} \frac{1}{|\Phi_N|} \sum_{n\in \Phi_N} \int_{X^{k+1}} (\1 \otimes G \otimes \mathbbm{1})\cdot T_{\Delta}^n (\1\otimes \mathbbm{1} \otimes G)  \diff \sigma_k \\
=\liminf_{N\to \infty} \frac{1}{|\Phi_N|} \sum_{n\in \Phi_N} \int_{X^{k+1}} (\1 \otimes G \otimes \1)\cdot T_{\Delta}^n (\1 \otimes G \otimes \mathbbm{1})  \diff \sigma_k,
\end{multline*}
where the equality follows by an application of \cref{analogue of 6.2} for the continuous function $\1 \otimes G \in C(X^{k})$ and the measure $\sigma_k$. Applying \cref{recurrence-prop-k-2} for the continuous function $\1 \otimes G \otimes \mathbbm{1} \in C(X^{k+1})$ it follows that the last expression is positive. Hence, there exist infinitely many $n\in \N$ such that 
$$\sigma_k((X \times U_1 \times \dots \times U_{k-1} \times X) \cap T_{\Delta}^{-n}(X \times X \times U_1 \times \cdots \times U_{k-1}))>0$$
and this concludes the proof.
\end{proof}

We remark in passing that the proof of \cref{right-progressive proof}
actually shows the following result, although we do not need to use it. 

\begin{prop}\label{recurrence-prop-k'}
    Let $F\in C(X^{k-1})$ be a nonnegative function with $\int \1 \otimes F \otimes \1 \diff \sigma_{k}>0$. Then, 
    $$\liminf_{N\to\infty} \frac{1}{|\Phi_N|} \sum_{n\in \Phi_N} \int \1 \otimes \1 \otimes (T_\Delta^n F(x_2,\ldots,x_{k})) \cdot (\1 \otimes F(x_1,\ldots,x_{k-1}) \otimes \1) \diff \sigma_{k}(x)>0.$$
\end{prop}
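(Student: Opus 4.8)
The plan is to follow, essentially verbatim, the computation carried out in the proof of \cref{right-progressive proof}, with the simplification that here $F$ already plays the role of the nonnegative continuous function, so no approximation of an indicator is needed. Fix $k\geq 2$. Since $F\in C(X^{k-1})$, the object $\1\otimes F\otimes\1$ in the statement is the function on $X^{k+1}$ that is $\1$ on the $0$-th coordinate, $F$ on coordinates $1,\dots,k-1$, and $\1$ on the $k$-th coordinate; likewise $\1\otimes\1\otimes(T_\Delta^n F)=T_\Delta^n(\1\otimes\1\otimes F)$, where on the left $T_\Delta$ is the diagonal transformation of $X^{k-1}$ and on the right it is that of $X^{k+1}$ (the identity holds because $T$ acts coordinatewise). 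So the average in the statement is
\[
\frac{1}{|\Phi_N|}\sum_{n\in\Phi_N}\int (\1\otimes F\otimes\1)\cdot T_\Delta^n(\1\otimes\1\otimes F)\diff\sigma_k.
\]

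First I would apply \cref{analogue of 6.2} to the continuous function $\1\otimes F\in C(X^k)$ (that is, $\1$ on the first coordinate and $F$ on the remaining $k-1$). Since $(\1\otimes F)\otimes\1=\1\otimes F\otimes\1$ and $\1\otimes(\1\otimes F)=\1\otimes\1\otimes F$ as functions on $X^{k+1}$, the proposition gives
\[
\lim_{N\to\infty}\norm{\frac{1}{|\Phi_N|}\sum_{n\in\Phi_N}T_\Delta^n(\1\otimes F\otimes\1)-\frac{1}{|\Phi_N|}\sum_{n\in\Phi_N}T_\Delta^n(\1\otimes\1\otimes F)}_{L^2(\sigma_k)}=0.
\]
Pairing this difference of Cesàro averages against the fixed $L^2(\sigma_k)$-function $\1\otimes F\otimes\1$ and invoking the Cauchy–Schwarz inequality, I obtain
\begin{multline*}
\liminf_{N\to\infty}\frac{1}{|\Phi_N|}\sum_{n\in\Phi_N}\int(\1\otimes F\otimes\1)\cdot T_\Delta^n(\1\otimes\1\otimes F)\diff\sigma_k\\
=\liminf_{N\to\infty}\frac{1}{|\Phi_N|}\sum_{n\in\Phi_N}\int(\1\otimes F\otimes\1)\cdot T_\Delta^n(\1\otimes F\otimes\1)\diff\sigma_k.
\end{multline*}

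It then remains to show that the right-hand side is positive, and for this I would invoke \cref{recurrence-prop-k-2} with the nonnegative function $F\otimes\1\in C(X^k)$: its hypothesis $\int\1\otimes(F\otimes\1)\diff\sigma_k=\int\1\otimes F\otimes\1\diff\sigma_k>0$ is exactly the assumption of \cref{recurrence-prop-k'}, so the proposition yields $\liminf_{N\to\infty}\frac{1}{|\Phi_N|}\sum_{n\in\Phi_N}\int T_\Delta^n(\1\otimes F\otimes\1)\cdot(\1\otimes F\otimes\1)\diff\sigma_k>0$, which is what we need. I do not anticipate a genuine difficulty here, since this is purely a rearrangement of the proof of \cref{right-progressive proof}; the only point to watch is the coordinate bookkeeping — checking that the auxiliary function handed to \cref{analogue of 6.2} lives on $X^k$ and sits in the right coordinates of $X^{k+1}$, and that the shifted factor $\1\otimes\1\otimes F$ matches the term appearing in the statement. (Conversely, \cref{right-progressive proof} is recovered from \cref{recurrence-prop-k'} by choosing $F\in C(X^{k-1})$ with $0\leq F\leq\1_{U_1\times\cdots\times U_{k-1}}$ and passing from positivity of the integral to positivity of $\sigma_k$ on the relevant open box.)
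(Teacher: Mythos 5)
Your proof is correct, and it is exactly the paper's argument: the paper does not give a separate proof of \cref{recurrence-prop-k'}, remarking only that it ``follows from the proof of \cref{right-progressive proof},'' and your proposal is precisely that proof with $F$ itself in place of the auxiliary function $G$ approximating $\1_{U_1\times\cdots\times U_{k-1}}$. The bookkeeping — applying \cref{analogue of 6.2} to $\1\otimes F\in C(X^k)$ so that $(\1\otimes F)\otimes\1=\1\otimes F\otimes\1$ and $\1\otimes(\1\otimes F)=\1\otimes\1\otimes F$ in $L^2(\sigma_k)$, then pairing with the fixed function $\1\otimes F\otimes\1$ via Cauchy--Schwarz, and finally invoking \cref{recurrence-prop-k-2} with $F\otimes\1\in C(X^k)$ — is exactly right, and so is your observation that \cref{right-progressive proof} is recovered by bounding $0\le F\le\1_{U_1\times\cdots\times U_{k-1}}$.
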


As a corollary of Propositions \ref{left-progressive proof} and \ref{right-progressive proof} we deduce the following.

\begin{theo}\label{progressive proof}
The measure $\sigma$ defined via \eqref{eq definition sigma} is 
progressive.   
\end{theo}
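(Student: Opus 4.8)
The plan is to deduce \cref{progressive proof} directly by combining \cref{left-progressive proof} and \cref{right-progressive proof} with the bookkeeping built into \cref{definition left-right progressive measure} and \cref{definition progressive measures in XN}. First I would recall that, by \cref{definition progressive measures in XN}, the measure $\sigma\in\cM(X^{\N_0})$ is progressive precisely when $P_k\sigma$ is progressive for every $k\geq 2$, and that, by \cref{definition left-right progressive measure}, a measure on a finite product is progressive exactly when it is simultaneously left- and right-progressive. Since $P_k\sigma=\sigma_k$ for every $k$, \cref{left-progressive proof} gives left-progressiveness of every $\sigma_k$ and \cref{right-progressive proof} gives right-progressiveness of every $\sigma_k$; combining these, each $\sigma_k$ with $k\geq 2$ is progressive, hence so is $\sigma$. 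So at the level of \cref{progressive proof} there is nothing left to do beyond citing and assembling the two preceding propositions.

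I would, however, make explicit that the real content sits in \cref{left-progressive proof} and \cref{right-progressive proof}, both of which reduce to properties of the single measures $\sigma_k$ and both of which rest on the same toolkit: the $U^k$-seminorm estimate \cref{theorem 6.6 from kmrr} (available because each $\sigma_k$ is $Id\times T\times\cdots\times T^k$-invariant with marginals $\leq i\mu$), the ``shift-the-$\1$'' identity \cref{analogue of 6.2}, and the multiple-recurrence positivity statement \cref{recurrence-prop-k-2}. For right-progressiveness the argument is the one already carried out: approximate an open box $U_1\times\cdots\times U_{k-1}$ from below by a continuous $G$, rewrite $\sigma_k((X\times V\times X)\cap T_\Delta^{-n}(X\times X\times V))$ as $\int(\1\otimes G\otimes\1)\cdot T_\Delta^n(\1\otimes\1\otimes G)\,\diff\sigma_k$, apply \cref{analogue of 6.2} to move the trailing $\1$ past $G$, and invoke \cref{recurrence-prop-k-2} to obtain positivity for infinitely many $n$; left-progressiveness follows the KMRR scheme of \cite[Section 6.1]{kmrr25} in the same spirit.

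I do not expect any genuine obstacle at the level of \cref{progressive proof} itself: the difficulty has already been absorbed into establishing the infinite-step analogs of the KMRR machinery — in particular into justifying the interchanges of limits via unique ergodicity of orbit closures in infinite-step pronilsystems (\cref{appendix lemam to interchange limits}), since here $\sigma$ is lifted from $Z_\infty$ rather than from a finite-step pronilfactor. The only point worth double-checking while assembling the corollary is the quantifier on $k$: ``progressive'' for a measure on $X^{\N_0}$ is a statement about all $k\geq 2$ at once, so one needs both \cref{left-progressive proof} and \cref{right-progressive proof} to hold for every such $k$ — which they do, each being reduced to the corresponding property of the individual $\sigma_k$ — and then the two conclusions combine uniformly in $k$.
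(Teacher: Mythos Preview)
Your proposal is correct and matches the paper's approach exactly: the paper states \cref{progressive proof} as an immediate corollary of Propositions~\ref{left-progressive proof} and~\ref{right-progressive proof}, with no additional argument. Your first paragraph already contains the complete proof; the remaining commentary is accurate but unnecessary for the theorem itself.
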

\subsection{Proof of \cref{topological pronilfactors 2}}

We   now prove \cref{topological pronilfactors 2} using the results 
of the previous subsection and an inductive argument. We repeat the 
statement of the theorem for the convenience of the reader.

\begin{named}{\cref{topological pronilfactors 2}}{}Let $(X,\mu,T)$ be an ergodic system with topological infinite-step 
pronilfactor. Let $\Phi=(\Phi_N)_{N\in \N}$ be a \Folner\ sequence, let 
$a\in \gen(\mu,\Phi)$ and let $\sigma$ be the measure defined in \cref{definition of sigma}. Then, for $\ell\in \N$ fixed and
any open set $E\subset X$ with $\mu(E)>0$ there exist 
sequences $(b_n)_{n\in \N}, (t_k)_{k\in \N}\subset \N$ such 
\begin{equation*}
       0<\sigma_{k+\ell+1}\left( \bigcap_{i=1}^{k+1}  \prod_{j=0}^{k+\ell+1}\bigcap_{h=\max(j,i)}^{i+\ell}  T^{-\left( t_i+ B_k^{\oplus h-j} \right)}  E   \right),  
   \end{equation*}
for every $k\in \N$, where $B_k=\{b_1,\ldots,b_{k}\}$.
\end{named}

The diagram sketching the proof \cref{topological pronilfactors 2} is the following.

\begin{figure}[h!]
\centering
\resizebox{1\textwidth}{!}{%
\begin{tikzpicture}[
    font=\LARGE,
    >=Stealth,
    node distance=2cm and 3cm,
    box/.style={rectangle, draw, minimum width=4cm, minimum height=1.5cm, align=center}
]
\node[box] (A) {Initialization step};
\node[box, below=of A] (B) {Right-progressiveness};
\node[box, below=of B] (C) {Left-progressiveness};

\draw[->] (A) -- (B);
\draw[->] (B.east) -- ++(2cm,0) |- (C.east);
\draw[->] (C.west) -- ++(-2cm,0) |- (B.west);

\draw[->, dashed] (A.east) -- ++(4cm,0) node[right]{$\begin{matrix}
    \text{Finds } t_1  \text{ such that} \\
 \sigma_{\ell+2}(X\times T^{-t_1}E \times\cdots \times T^{-t_1}E)>0
\end{matrix} $};
\draw[->, dashed] (B.east) ++(2cm,-2cm) -- ++(2cm,0) node[right]{$\begin{matrix}
    \text{Finds } b_k \text{ and } t_k  \text{ such that} \\
  \sigma_{k+\ell+1}\left(\bigcap_{i=1}^{k+1}  \prod_{j=0}^{k+\ell+1}\bigcap_{h=\max(j,i)}^{i+\ell}  T^{-\left( t_i+ B_k^{\oplus h-j} \right)}  E   \right)>0.
\end{matrix} $};
\end{tikzpicture}}
\caption{Diagram of the proof of \cref{topological pronilfactors 2}}
\label{fig:2}
\end{figure}

\begin{proof}
    By \cref{Initialization-step} we can find $t_1\in \N$ such that 
    $$\sigma_{\ell+1}(X\times T^{-t_1}E \times\cdots \times T^{-t_1}E)>0. $$
    As $\sigma$ is right-progressive we can find $t_2'$ such that if $t_2=t_2'+t_1$, it holds that 
    $$\sigma_{\ell+2}( (X\times T^{-t_1}E \times\cdots \times T^{-t_1}E \times X)\cap (X\times X\times T^{-t_2}E \times\cdots \times T^{-t_2}E)) >0. $$
    Using left-progressiveness we obtain $b_1\in \N$ such that the $\sigma_{\ell+2}$ measure of  
    \begin{align*}
       &\Big(T^{-(t_1+b_1)}E \times \left( T^{-(t_1+b_1)}E \cap T^{-t_1}E \right) \times \cdots \times \left( T^{-(t_1+b_1)}E\cap T^{-t_1}E \right) \times T^{-t_1}E \times X \Big) \bigcap \\
       & \Big( X\times T^{-(t_2+b_1)}E \times \left( T^{-(t_2+b_1)} \cap T^{-t_2}E \right) \times \cdots \times \left( T^{-(t_2+b_1)}E\cap T^{-t_2}E \right) \times T^{-t_2}E \Big) 
    \end{align*}
    is positive. Recall the notational conventions from \cref{notational conventions} and observe that this can be rewritten as 
    $$\sigma_{\ell+2}\left(\bigcap_{i=1}^{2}  \prod_{j=0}^{\ell+2}\bigcap_{h=\max(j,i)}^{i+\ell}  T^{-\left( t_i+ B_1^{\oplus h-j} \right)}  E   \right)>0,$$
    where $B_1=\{b_1\}$, using for example that $B_1^{\oplus h}=\emptyset$ and so $T^{-(t_2+B_1^{\oplus h})}E=X$, for $h\geq 2$. In this fashion we   construct the sequences of the statement inductively.

    Assume we have constructed $b_1<\cdots<b_k$ and $t_1<\cdots< t_{k+1}\in \N$ such that if $B_k=\{b_1,\ldots,b_k\}$, we have that
    \begin{equation}\label{eq-measure-conj-2}
        \sigma_{k+\ell+1}\left(\bigcap_{i=1}^{k+1}  \prod_{j=0}^{k+\ell+1}\bigcap_{h=\max(j,i)}^{i+\ell}  T^{-\left( t_i+ B_k^{\oplus h-j} \right)}  E   \right)>0.
    \end{equation}
    We note in particular that 
    \begin{equation}\label{eq-measure-conj-2 (2)}
    \sigma_{k+\ell+1}\left( \prod_{j=0}^{k+\ell+1}\bigcap_{h=\max(j,k+1)}^{k+1+\ell}  T^{-\left( t_{k+1}+ B_k^{\oplus h-j} \right)}  E   \right)>0.    
    \end{equation}
    As $\sigma$ is a right-progressive measure we can apply the defining property in \cref{definition left-right progressive measure}, only considering the set isolated in \eqref{eq-measure-conj-2 (2)}, in order to find $t_{k+2}'$ such that if $t_{k+2}=t_{k+2}'+t_{k+1}$, then the $\sigma_{k+\ell+2}$ measure of the set 
    \begin{align*}
           &\left[\left(\bigcap_{i=1}^{k+1}  \prod_{j=0}^{k+\ell+2}\bigcap_{h=\max(j,i)}^{i+\ell}  T^{-\left( t_i+ B_k^{\oplus h-j} \right)}  E   \right)\right] \cap  \left[ X\times  \prod_{j=0}^{k+\ell+1}  \left(\bigcap_{h=\max(j,k+1)}^{k+1+\ell}  T^{-\left( t_{k+2}+ B_k^{\oplus h-j} \right)}  E \right) \right]\\
         \end{align*}
         is positive. The projection onto the $0$-th coordinate of this set equals 
         $$ \bigcap_{i=1}^{k+1} \bigcap_{h=i}^{i+\ell}  T^{-\left( t_i+ B_k^{\oplus h}\right)} =  \bigcap_{i=1}^{k+2} \bigcap_{h=i}^{i+\ell}  T^{-\left( t_i+ B_k^{\oplus h}\right)} $$ 
         and the projection onto the rest $k+\ell+2$ coordinates becomes 
         \begin{align*}
            & \bigcap_{i=1}^{k+1}  \prod_{j=1}^{k+\ell+2} \left(\bigcap_{h=\max(j,i)}^{i+\ell}  T^{-\left( t_i+ B_k^{\oplus h-j} \right)}  E  \cap \bigcap_{h=\max(j-1,k+1)}^{k+1+\ell} T^{-\left(t_{k+2}+ B_k^{\oplus h-j+1}\right)} E\right) \\
           &=  \bigcap_{i=1}^{k+1}  \prod_{j=1}^{k+\ell+2}\left(\bigcap_{h=\max(j,i)}^{i+\ell}  T^{-\left( t_i+ B_k^{\oplus h-j} \right)}  E  \cap \bigcap_{h=\max(j,k+2)}^{k+2+\ell} T^{-\left(t_{k+2}+ B_k^{\oplus h-j}\right)}E \right)\\
           &=\bigcap_{i=1}^{k+2}  \prod_{j=1}^{k+\ell+2}\bigcap_{h=\max(j,i)}^{i+\ell}  T^{-\left( t_i+ B_k^{\oplus h-j} \right)}  E. 
           \end{align*}
    In total, this gives that 
    $$\sigma_{k+\ell+2}\left(\bigcap_{i=1}^{k+2}  \prod_{j=0}^{k+\ell+2}\bigcap_{h=\max(j,i)}^{i+\ell}  T^{-\left( t_i+ B_k^{\oplus h-j} \right)}  E   \right)>0. $$
   By left-progressiveness, we obtain $b_{k+1}>b_k$ such that the $\sigma_{k+\ell+2}$ measure of
   \begin{align*}
      &\left(\bigcap_{i=1}^{k+2}  \prod_{j=0}^{k+\ell+2}\bigcap_{h=\max(j,i)}^{i+\ell}  T^{-\left( t_i+ B_k^{\oplus h-j} \right)}  E   \right) \cap \left( \bigcap_{i=1}^{k+2}  \prod_{j=0}^{k+\ell+2}\bigcap_{h=\max(j+1,i)}^{i+\ell}  T^{-\left( t_i+ b_{k+1}+ B_k^{\oplus h-j-1} \right)}  E  \right) \\
      &=\bigcap_{i=1}^{k+2} \prod_{j=0}^{k+\ell+2} \left(\bigcap_{h=\max(j,i)}^{i+\ell}  T^{-\left( t_i+ B_k^{\oplus h-j} \right)}  E    \cap \bigcap_{h=\max(j+1,i)}^{i+\ell}  T^{-\left( t_i+b_{k+1}+ B_k^{\oplus h-j-1} \right)}  E \right)
   \end{align*}  
    is positive. If we let $B_{k+1}=\{b_1,\ldots,b_k,b_{k+1}\}$, we have thus established that 
   \begin{equation*}
       0<\sigma_{k+\ell+2}\left( \bigcap_{i=1}^{k+2}  \prod_{j=0}^{k+\ell+2}\bigcap_{h=\max(j,i)}^{i+\ell}  T^{-\left( t_i+ B_{k+1}^{\oplus h-j} \right)}  E   \right),  
   \end{equation*}
   concluding the induction. 
\end{proof}

\section{\cref{topological pronilfactors 1} via multiple right-progressive measures} \label{sec mult recurrence}

\subsection{The measure $\sigma$ is multiple right-progressive} \label{sec useful lemmas}

For a metric space $X$, a homeomorphism $T \colon X \to X$ and a vector $u = (u_1, \ldots, u_k) \in \N^k$ we write $T^u = T^{u_1} \times \cdots \times T^{u_k}$.  Based on Furstenberg-Katznelson's multiple recurrence theorem \cite[Theorem A]{Furstenberg_Katznelson78}, we extend \cite[Theorem 6.3]{kmrr25} to a multiple recurrence statement.

\begin{theo}\label{T-Delta-Furstenberg}
    Let $X$ be a compact metric space, let $T_1, \ldots, T_{\ell} \colon X \to X$ be commuting homeomorphisms for some $\ell \in \N$ and $\Phi = (\Phi_N)_{N \in \N}$ be a F\o lner sequence. Let also $k \in \N$, $v_1, \ldots, v_{\ell} \in \N^k$ and $\nu \in \cM(X^k)$ be a Borel probability measure invariant under $T_j^{v_j},$ for $j=1,\ldots,\ell$. For every measurable set $A = A_1 \times \cdots \times A_k \subset X^k$ with $\nu(A) >0$, and every $u_1, \ldots, u_{\ell} \in \N^k$, it holds that
    \begin{equation}
        \liminf_{N \to \infty} \frac{1}{|\Phi_N|} \sum_{n \in \Phi_N} \nu(A \cap (T^{u_1}_1)^{-n} A \cap \cdots \cap (T^{u_{\ell}}_{\ell})^{-n} A) >0 
    \end{equation}
\end{theo}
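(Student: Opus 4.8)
The plan is to reduce the statement to the Furstenberg--Katznelson multiple recurrence theorem by passing from the product system $(X^k, \nu)$ with the single-parameter family of transformations $S_j := T_j^{v_j}$ (under which $\nu$ is invariant) to an associated $\mathbb{Z}^{\ell}$-system, and then applying multiple recurrence there. The point is that $\nu$ need not be invariant under the transformations $T_j^{u_j}$ that appear in the averages, so we cannot apply the Furstenberg--Katznelson theorem directly to $\nu$; instead we must first build an auxiliary measure-preserving system in which the relevant shifts all act by measure-preserving maps.

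First I would fix the box set $A = A_1 \times \cdots \times A_k$ with $\nu(A) > 0$ and, replacing each $A_i$ by a slightly smaller open-or-closed set if needed, assume $A$ is closed (this only shrinks $\nu(A)$, which remains positive, and a lower bound for the closed set gives one for the original). Next, consider the $\mathbb{Z}^{\ell}$-action on $X^k$ generated by the commuting homeomorphisms $T_1^{u_1}, \ldots, T_{\ell}^{u_{\ell}}$, and let $\Omega \subset X^k$ be the orbit closure of $\operatorname{supp}(\nu)$ under the group generated by all of $T_1^{v_1},\ldots,T_{\ell}^{v_{\ell}}, T_1^{u_1},\ldots,T_{\ell}^{u_{\ell}}$. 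I would then produce, from $\nu$, an invariant measure $\tilde\nu$ for the $\mathbb{Z}^{\ell}$-action of the $T_j^{u_j}$: take a weak* limit along the F\o lner sequence $\Phi$ of the averages $\frac{1}{|\Phi_N|}\sum_{n \in \Phi_N} (T_1^{u_1})^{-n} \cdots$... — but since this must be done \emph{simultaneously} and compatibly with the $T_j^{v_j}$-invariance, the cleaner route is to invoke the standard fact that the $\mathbb{Z}^{\ell}$-action $(T_1^{u_1},\ldots,T_\ell^{u_\ell})$ on the compact space $\Omega$ carries \emph{some} invariant Borel probability measure $\tilde\nu$, chosen so that $\tilde\nu(A) > 0$; concretely, one can average $\nu$ over the $\mathbb{Z}^\ell$-orbit along $\Phi$ to get an invariant measure whose mass on the closed set $A$ is at least (a limsup of) the averages of $\nu$ on translates of $A$, hence positive provided we pick $A$ appropriately. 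Once $\tilde\nu$ is a genuine $(T_1^{u_1},\ldots,T_\ell^{u_\ell})$-invariant measure with $\tilde\nu(A) > 0$, the Furstenberg--Katznelson multiple recurrence theorem \cite[Theorem A]{Furstenberg_Katznelson78} applied to the $\ell$ commuting transformations $T_1^{u_1},\ldots,T_\ell^{u_\ell}$ and the single set $A$ yields
\begin{equation*}
    \liminf_{N \to \infty} \frac{1}{|\Phi_N|} \sum_{n \in \Phi_N} \tilde\nu\big(A \cap (T_1^{u_1})^{-n} A \cap \cdots \cap (T_\ell^{u_\ell})^{-n} A\big) > 0.
\end{equation*}
The final step is to transfer this positivity back to $\nu$: since $A$ and all the sets $(T_j^{u_j})^{-n}A$ are closed and the intersection $A \cap (T_1^{u_1})^{-n}A \cap \cdots$ is a box (a product of sets in $X$), one uses the $T_j^{v_j}$-invariance of $\nu$ together with the construction of $\tilde\nu$ as an average of $T_j^{u_j}$-translates of $\nu$ — a Fubini/averaging argument shows the $\nu$-averages dominate (a positive multiple of) the $\tilde\nu$-averages on the relevant box sets — or, alternatively, one runs the whole argument in the style of \cite[Theorem 6.3]{kmrr25}, which is exactly the $\ell = 1$ prototype, and simply replaces the single iterate by the $\mathbb{Z}^\ell$-action.

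The main obstacle I anticipate is the bookkeeping in the last step: making precise the sense in which the multiple-recurrence lower bound for the auxiliary invariant measure $\tilde\nu$ descends to the original, merely $T_j^{v_j}$-invariant measure $\nu$, without losing positivity. This requires choosing the set $A$ carefully (closed, and a box, so that all the intersected shifted copies are again closed boxes and the weak* limit behaves well — weak* limits of measures are upper semicontinuous on closed sets), and it requires that the averaging used to produce $\tilde\nu$ from $\nu$ be compatible with the same F\o lner sequence $\Phi$ appearing in the statement, so that a diagonal/van der Corput-type interchange of limits is legitimate. This interchange is exactly the kind of step that the excerpt flags as needing \cref{appendix lemam to interchange limits} in the pronilsystem setting; here the situation is simpler because we only need the existence of \emph{an} invariant measure with positive mass on $A$, not a canonical one, so I expect the argument of \cite[Theorem 6.3]{kmrr25} to generalize essentially verbatim, with Furstenberg's single recurrence theorem replaced by Furstenberg--Katznelson's multiple recurrence theorem.
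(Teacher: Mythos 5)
Your primary line of attack---constructing an auxiliary $(T_1^{u_1},\dots,T_\ell^{u_\ell})$-invariant measure $\tilde\nu$, applying Furstenberg--Katznelson to $\tilde\nu$, and then transferring positivity back to $\nu$---has a genuine gap at the transfer step, and this is not mere bookkeeping. If $\tilde\nu$ is a weak\textsuperscript{*} limit of averages $\frac{1}{|\Phi_M|}\sum_{m\in\Phi_M}S^{-m}\nu$ with $S=T_1^{u_1}\times\cdots$ (or any $\mathbb{Z}^\ell$-average), then for each fixed $n$,
\begin{equation*}
\tilde\nu\bigl(A\cap (T_1^{u_1})^{-n}A\cap\cdots\bigr)
\ \text{is controlled by}\
\limsup_M \frac{1}{|\Phi_M|}\sum_{m\in\Phi_M}\nu\bigl(S^{-m}A\cap S^{-m}(T_1^{u_1})^{-n}A\cap\cdots\bigr),
\end{equation*}
and the shifted sets $S^{-m}A$ are not $A$. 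Since $\nu$ is \emph{not} invariant under $S$, there is no a priori relation between $\nu$ on $S^{-m}A\cap\cdots$ and $\nu$ on $A\cap\cdots$, and no ``Fubini/averaging argument'' produces one. Two further issues: even the positivity $\tilde\nu(A)>0$ is not automatic (the averages $\frac{1}{|\Phi_M|}\sum\nu(S^{-m}A)$ can vanish in the limit even though $\nu(A)>0$; ``provided we pick $A$ appropriately'' is not a justification), and, tellingly, your construction never really uses the $T_j^{v_j}$-invariance of $\nu$, which is the crucial hypothesis.

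What you gesture at as an ``alternative''---running the argument of \cite[Theorem 6.3]{kmrr25}---is in fact the paper's route, and it does not require any auxiliary invariant measure. The key mechanism (which you do not spell out) is that the product structure of $A=A_1\times\cdots\times A_k$ lets one trade $u$-iterates for $v$-iterates coordinate by coordinate. Writing $c=\operatorname{LCM}(v_{i,j})$ and $w_{i,j}=c\,u_{i,j}/v_{i,j}$, one checks $(T_i^{v_i})^{-w_{i,j}}(X^{j-1}\times A_j\times X^{k-j})=(T_i^{u_i})^{-c}(X^{j-1}\times A_j\times X^{k-j})$, hence
\begin{equation*}
(T_i^{u_i})^{-cn}A \supset \bigcap_{j=1}^k (T_i^{v_i})^{-w_{i,j}n}A .
\end{equation*}
Passing to the scaled F\o lner sequence $\Phi_N/c$, the average in the statement is bounded below by $1/c$ times an average of $\nu\bigl(A\cap\bigcap_{i,j}(T_i^{v_i})^{-w_{i,j}n}A\bigr)$, which involves only transformations under which $\nu$ \emph{is} invariant; Furstenberg--Katznelson applied to the $\ell k$ commuting maps $(T_i^{v_i})^{w_{i,j}}$ and the set $A$ finishes the proof. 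This is where the $T_j^{v_j}$-invariance and the product structure of $A$ both enter essentially, neither of which your auxiliary-measure argument actually exploits.
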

\begin{remark*}
    In this work we are merely concerned with the case $T_1 = \cdots = T_{\ell}  = T$ for a fixed $T \colon X \to X$, $v_i = (1, 2, \ldots, k)$ and $u_i = (i, \ldots, i) \in \N^k$ for all $i=1, \ldots, \ell$. But we believe the general statement is of independent interest. Notice that when $v_i = u_i$ for all $i=1, \ldots, \ell$, the statement is a special case of Furstenberg-Katznelson's multiple recurrence theorem \cite{Furstenberg_Katznelson78}. We also remark that the assumption that $A$ is a product set is necessary, as shown in \cite[Example $6.4$]{kmrr25} for the single recurrence statement proven there.
\end{remark*}

\begin{proof}
    This proof is similar to the one of \cite[Theorem 6.3]{kmrr25}. We let $v_i=(v_{i,1},\ldots, v_{i,k})\in \N^k$ and $u_i =  (u_{i,1} , \ldots, u_{i,k})\in \N^k$, for $i=1,\ldots,\ell$, be vectors as in the statement of the theorem. 
    
    Let $c= LCM(v_{i,j} \colon 1 \leq i \leq \ell, 1 \leq j \leq k)$ be the least common multiple of the coordinates of $v_1, \ldots, v_{\ell}$ and set $w_{i,j}= c \ \dfrac{u_{i,j}}{v_{i,j}} \in \N$, for $1\leq i\leq \ell$ and $1\leq j \leq k$. Observe that
\begin{equation*}
    (T_i^{v_i})^{-w_{i,j}}(X^{j-1} \times A_j \times X^{k-j}) = (T^{u_i}_i)^{-c}(X^{j-1} \times A_j \times X^{k-j}),
\end{equation*}
for all $i = 1, \ldots, \ell$ and $j=1, \ldots, k$. Thus, for any $n\in \N$ we see that 
\begin{align*}
   (T^{u_i}_i)^{-cn}A &= \bigcap_{j=1}^k (T^{u_i}_i)^{-cn}(X^{j-1} \times A_j \times X^{k-j}) \\
   &= \bigcap_{j=1}^k (T_i^{v_i})^{-w_{i,j}n}(X^{j-1} \times A_j \times X^{k-j}) \supset \bigcap_{j=1}^k (T_i^{v_i})^{-w_{i,j}n} A. 
\end{align*}
For the \Folner\ sequence $\Psi=(\Psi_N)_{N\in \N}$ given by $\Psi_N= \Phi_N/c = \{ n \in \N \colon cn \in \Phi_N \}$, we have 
\begin{align*}
    \liminf_{N \to \infty}& \frac{1}{|\Phi_N|} \sum_{n \in \Phi_N} \nu(A \cap (T^{u_1}_1)^{-n} A \cap \cdots \cap (T^{u_{\ell}}_{\ell})^{-n} A) \\
    &\geq \liminf_{N \to \infty} \frac{1}{c|\Psi_N|} \sum_{n \in \Psi_N} \nu(A \cap (T^{u_1}_1)^{-cn} A \cap \cdots \cap (T^{u_{\ell}}_{\ell})^{-cn} A) \\
    &\geq \liminf_{N \to \infty} \frac{1}{c|\Psi_N|} \sum_{n \in \Psi_N} \nu \Big(A \cap \Big(\bigcap_{j=1}^k (T_1^{v_1})^{-w_{1,j}n} A \Big)  \cap \cdots \cap \Big(\bigcap_{j=1}^k (T_{\ell}^{v_{\ell}})^{-w_{{\ell},j}n} A \Big) \Big).
\end{align*}
Since $\nu$ is invariant under $T_1^{v_1}, \ldots, T_{\ell}^{v_{\ell}}$ (and therefore by $(T_i^{v_i})^{w_{i,j}}$ for all $i = 1 , \ldots, \ell$ and $j = 1, \ldots, k $), the last average is positive by Furstenberg-Katznelson's multiple recurrence theorem, so we conclude. 
\end{proof}

\begin{remark*}
    Notice that if we consider the special case where $T_1 = \cdots = T_{\ell}  = T$ for a fixed $T \colon X \to X$ in the previous proof, it suffices to apply Furstenberg's multiple recurrence. 
\end{remark*}

Throughout this section, we let $(X,\mu,T)$ be an ergodic system with topological infinite-step pronilfactor $(Z_{\infty},m,T)$, and $\sigma\in \mathcal{M}(X^{\N_0})$ be the measure defined in \cref{definition of sigma} for some generic point $a\in \gen(\mu,\Phi)$.
We use \cref{T-Delta-Furstenberg} to obtain the following result. 
\begin{cor}\label{T-Delta-Furstenberg-cor}
       Let $k,\ell \in \N$ with $k\geq 2$. Let $f_1, \ldots, f_k  \in C(X)$ be nonnegative functions and $\Phi=(\Phi_N)_{N\in \N}$ be a \Folner\ sequence in $\N$. If $F=\1\otimes f_1\otimes\cdots \otimes f_k$ and
    $$\int F\diff \sigma_k>0, $$ 
   then we have that 
    $$  \liminf_{M\to\infty}\liminf_{N\to\infty}\frac{1}{|\Phi_M|} \sum_{m\in \Phi_M}  \frac{1}{|\Phi_N|} \sum_{n\in \Phi_N} \int F \cdot \prod_{i=1}^{\ell}  T_{\Delta}^{n+im} F \diff\sigma_{k}>0.$$  
\end{cor}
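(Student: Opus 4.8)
The plan is to peel off the product structure of $F$ so as to reduce the claim to a two–parameter multiple recurrence estimate for the $\tilde T$–invariant marginal of $\sigma_k$, and then to obtain that estimate by applying \cref{T-Delta-Furstenberg} twice: once for the $m$–average and once, in a quantitative form, for the $n$–average. Write $\nu\in\cM(X^k)$ for the pushforward of $\sigma_k$ under the projection onto the last $k$ coordinates; by the remark after \cref{definition of sigma}, $\nu$ is invariant under $\tilde T=T\times T^2\times\cdots\times T^k$, and since the $0$–th coordinate of $F$ is $\1$ one has $\int_{X^{k+1}}\1\otimes g_1\otimes\cdots\otimes g_k\,d\sigma_k=\int_{X^k}g_1\otimes\cdots\otimes g_k\,d\nu$ for nonnegative $g_i\in C(X)$. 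In particular $\int f_1\otimes\cdots\otimes f_k\,d\nu=\int F\,d\sigma_k>0$, so the open set $\bigcap_j\{f_j>0\}$ has positive $\nu$–measure; hence there is $\varepsilon>0$ such that the open product set $A:=A_1\times\cdots\times A_k$ with $A_j:=\{f_j>\varepsilon\}$ satisfies $\nu(A)>0$. Using $f_j\ge\varepsilon\one_{A_j}$, $T^rf_j\ge\varepsilon\one_{T^{-r}A_j}$ and the fact that the integrand factorises over the $k$ coordinates, one obtains for all $m,n\in\N$
\[
\int F\cdot\prod_{i=1}^{\ell}T_\Delta^{n+im}F\,d\sigma_k=\int_{X^k}\prod_{j=1}^{k}\Big(f_j\cdot\prod_{i=1}^{\ell}T^{n+im}f_j\Big)\,d\nu\ \ge\ \varepsilon^{k(\ell+1)}\,\nu\Big(A\cap\bigcap_{i=1}^{\ell}T_\Delta^{-(n+im)}A\Big),
\]
so it suffices to bound from below the double average of $\nu\big(A\cap\bigcap_{i=1}^{\ell}T_\Delta^{-(n+im)}A\big)$.

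For the $m$–average, I would apply \cref{T-Delta-Furstenberg} with $T_1=\cdots=T_\ell=T$, $v_i=(1,2,\dots,k)$ (so each $T^{v_i}=\tilde T$, under which $\nu$ is invariant) and $u_i=(i,\dots,i)\in\N^k$ (so that $(T^{u_i})^{-m}=T_\Delta^{-im}$): since $A$ is a measurable product set of positive $\nu$–measure,
\[
\delta:=\liminf_{M\to\infty}\frac1{|\Phi_M|}\sum_{m\in\Phi_M}\nu\Big(\bigcap_{i=0}^{\ell}T_\Delta^{-im}A\Big)>0.
\]
Set $C_m:=\bigcap_{i=0}^{\ell}T_\Delta^{-im}A$, again an open product set. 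Since $0\le\nu(C_m)\le1$, a routine averaging argument shows that for all large $M$ the proportion $p_M$ of $m\in\Phi_M$ with $\nu(C_m)\ge\delta/2$ satisfies $p_M\ge\delta/4$, so $\liminf_M p_M\ge\delta/4$. For the $n$–average I would use the elementary inclusion $C_m\cap T_\Delta^{-n}C_m\subset A\cap\bigcap_{i=1}^{\ell}T_\Delta^{-(n+im)}A$ (valid because $C_m$ already contains the $i=0$ term $A$) together with a \emph{quantitative} version of \cref{recurrence-prop-k-2}: since the proof of \cref{T-Delta-Furstenberg} (in the case $T_1=\cdots=T_\ell=T$, which reduces to Furstenberg's multiple recurrence theorem) yields lower bounds depending only on the measure of the set — cf. \cref{quantitative recurrence-prop-k-2}, applied to a continuous function approximating $\one_{C_m}$ from below — there is $c=c(\delta/2)>0$ with $\liminf_N\frac1{|\Phi_N|}\sum_{n\in\Phi_N}\nu\big(C\cap T_\Delta^{-n}C\big)\ge c$ for every product set $C$ with $\nu(C)\ge\delta/2$. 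Combining this with the displayed pointwise bound, for each $m$ with $\nu(C_m)\ge\delta/2$ one gets $\liminf_N\frac1{|\Phi_N|}\sum_{n\in\Phi_N}\int F\cdot\prod_iT_\Delta^{n+im}F\,d\sigma_k\ge\varepsilon^{k(\ell+1)}c$, while for the remaining $m$ this inner $\liminf$ is $\ge0$ because $F\ge0$.

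Finally, since each $\Phi_M$ is a finite set, superadditivity of $\liminf$ gives
\[
\liminf_{M\to\infty}\liminf_{N\to\infty}\frac1{|\Phi_M|}\sum_{m\in\Phi_M}\frac1{|\Phi_N|}\sum_{n\in\Phi_N}\int F\cdot\prod_{i=1}^{\ell}T_\Delta^{n+im}F\,d\sigma_k\ \ge\ \liminf_{M\to\infty}\frac1{|\Phi_M|}\sum_{m\in\Phi_M}\liminf_{N\to\infty}\frac1{|\Phi_N|}\sum_{n\in\Phi_N}\int F\cdot\prod_{i=1}^{\ell}T_\Delta^{n+im}F\,d\sigma_k,
\]
and discarding the nonnegative terms with $\nu(C_m)<\delta/2$ and using $\liminf_M p_M\ge\delta/4$ leaves a quantity at least $\varepsilon^{k(\ell+1)}c\,\delta/4>0$, which is the assertion. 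The main obstacle is the $n$–average step: \cref{T-Delta-Furstenberg} as stated is not quantitative, so one must re‑enter its proof (equivalently, invoke the uniform form of Furstenberg's multiple recurrence theorem) to produce a constant $c$ depending only on $\nu(C_m)$ and not on $m$; relatedly, the shifts $n+im$ are affine in the two averaging variables, so one cannot handle them in a single application to a $\Z^2$–action — this is exactly why the argument is organised as an outer application producing a positive–density set of ``good'' $m$, followed by a uniform inner application over those $m$.
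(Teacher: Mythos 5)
Your proposal is correct and follows essentially the same strategy as the paper's own proof: project to the last $k$ coordinates, apply \cref{T-Delta-Furstenberg} (with $T_i=T$, $v_i=(1,\dots,k)$, $u_i=(i,\dots,i)$) to the $m$-average to extract a positive-lower-density set of good $m$, and then for each good $m$ invoke the quantitative form of the single-recurrence estimate (\cref{quantitative recurrence-prop-k-2}) for the $n$-average, concluding by superadditivity of $\liminf$. The only cosmetic difference is that the paper works directly with the continuous function $G_m = F^*\cdot\prod_{i=1}^{\ell}T_\Delta^{im}F^*$ when applying the quantitative recurrence remark (noting $\int\1\otimes G_m\,d\sigma_k>\epsilon$ on $P_\epsilon$), whereas you pass to the open product set $C_m$ and approximate its indicator from below by a continuous function — this costs you an extra, harmless, approximation step but yields the same conclusion; the "obstacle" you flag about quantitativity is already resolved by \cref{quantitative recurrence-prop-k-2}, which is exactly what the paper uses, so there is no genuine gap.
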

\begin{proof}
    Denote $F^*=f_1\otimes \cdots \otimes f_k$. Letting $\supp(f_i)=\overline{\{x: f_i(x)>0\}}$, for $i=1,\ldots,k$, and setting $A=\supp(f_1)\times \cdots\times \supp(f_k)$, it follows by the assumption that $\int F^* \diff P_k^*\sigma>0$ and hence that $P_k^*\sigma(A)>0$. This allows us to apply \cref{T-Delta-Furstenberg} for the set $A\subset X^k$ defined above and the measure $P_k^*\sigma$, which is invariant under $T\times T^2 \times \cdots \times T^k$. In particular, with $k,\ell \in \N$ as above and $u_i=(i,\ldots,i)$ for each $i\in [\ell]$, and $T_1=\cdots=T_{\ell}=T$, we obtain 
    $$\liminf_{M\to \infty} \frac{1}{|\Phi_M|}\sum_{m\in \Phi_M} P_k^*\sigma(A \cap T_\Delta^{-m}A \cap \cdots \cap T_\Delta^{-\ell m}A)>0. $$
    As $f_1,\ldots,f_k$ are continuous functions, this implies that
     $$\liminf_{M\to \infty} \frac{1}{|\Phi_M|}\sum_{m\in \Phi_M} \int F^* \cdot \prod_{i=1}^\ell T_\Delta^{im}F^*  \diff P_k^*\sigma>0. $$
 Equivalently, for some $\epsilon>0$, the set 
    $$P_{\epsilon}=\{m\in \N :  \int F^* \cdot \prod_{i=1}^\ell T_\Delta^{im}F^*  \diff P_k^*\sigma>\epsilon\} $$
   is such that $\underline{\diff}_{\Phi}(P_{\epsilon})>0$. Then, it follows by \cref{recurrence-prop-k-2} (see also Remark \ref{quantitative recurrence-prop-k-2}) that there exists $\epsilon'>0$ such that for each $m\in P_{\epsilon}$,
   $$  \liminf_{N\to\infty} \frac{1}{|\Phi_N|} \sum_{n\in \Phi_N} \int F^* \cdot \prod_{i=1}^{\ell}  T_{\Delta}^{im} F^* \cdot T_{\Delta}^n F^* \cdot \prod_{i=1}^{\ell}  T_{\Delta}^{n+im} F^* \diff P_k^*\sigma>\epsilon',$$
   which in turn implies that
    $$  \liminf_{N\to\infty} \frac{1}{|\Phi_N|} \sum_{n\in \Phi_N} \int F^* \cdot \prod_{i=1}^{\ell}  T_{\Delta}^{n+im} F^* \diff P_k^*\sigma>\epsilon'',$$
    for some $\epsilon''>0$. As $\underline{\diff}_{\Phi}(P_\epsilon)>0$ we conclude that
       $$  \liminf_{M\to\infty}\frac{1}{|\Phi_M|} \sum_{m\in \Phi_M} \liminf_{N\to\infty}  \frac{1}{|\Phi_N|} \sum_{n\in \Phi_N} \int F^* \cdot \prod_{i=1}^{\ell}  T_{\Delta}^{n+im} F^* \diff P_k^*\sigma>0,$$
       which implies the conclusion of the theorem.
\end{proof}

The main result of this section is the following proposition which is an 
indirect consequence of \cref{T-Delta-Furstenberg}. 

\begin{prop}\label{multiple-recurrence-T-Delta-uncoupled}
    Let $k,\ell\in \N$ with $k\geq 2$. Let $f_1, \ldots, f_k  \in C(X)$ be nonnegative functions and $\Phi=(\Phi_N)_{N\in \N}$ be a \Folner\ sequence in $\N$. If $F=\1\otimes f_1\otimes\cdots \otimes f_k$ and
    $$\int Fd\sigma_k>0, $$
    then we have that 
    $$  \liminf_{M\to\infty}\liminf_{N\to\infty}\frac{1}{|\Phi_M|} \sum_{m\in \Phi_M}  \frac{1}{|\Phi_N|} \sum_{n\in \Phi_N} \int (F\otimes \1^{\otimes \ell}) \cdot \prod_{i=1}^{\ell}  (T_{\Delta}^{n+im} (\1^{\otimes i} \otimes F\otimes \1^{\otimes \ell -i})) \diff\sigma_{k+\ell}>0.$$ 
\end{prop}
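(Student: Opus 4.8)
The plan is to reduce \cref{multiple-recurrence-T-Delta-uncoupled} to a multiple‑recurrence statement inside the infinite‑step pronilfactor and then settle it by combining \cref{T-Delta-Furstenberg} with the single‑recurrence estimates for $\sigma$, in close analogy with the proof of \cref{T-Delta-Furstenberg-cor}.

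First I would unfold the $\sigma_{k+\ell}$‑integral. For fixed $n,m$ the integrand is a tensor function $\bigotimes_{c=0}^{k+\ell}g_c$ on $X^{k+\ell+1}$ whose $0$‑th coordinate function is $\1$, so by \eqref{eq definition sigma} the integral equals $\int_{Z_{\infty}^{k+\ell}}\bigotimes_{c=1}^{k+\ell}\E(g_c\mid Z_{\infty})\,\diff P_{k+\ell}^*\xi$. Using the seminorm bound \cref{theorem 6.6 from kmrr} together with \cref{6.11 from kmrr25} and the characterization of $Z_\infty$ by uniformity seminorms (\cref{infinite step nilfactor and HK seminorms}), exactly as in the passage giving \cref{analogue of 6.2}, I would replace — one at a time, inside the iterated averages over $n$ and $m$ — each $f_p$ by $\tilde f_p:=\E(f_p\mid Z_\infty)$; since $\|f_p-\tilde f_p\|_{U^{s}(X,\mu,T)}=0$ for all $s$, this introduces no error in the limit. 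Once every function is $Z_\infty$‑measurable the conditional expectations distribute and the $0$‑th coordinate contributes $\1(a)=1$, so writing $z=\pi_\infty(a)$ and $G^*=\tilde f_1\otimes\cdots\otimes\tilde f_k$ the integral becomes the genuine orbit average
\begin{equation*}
\lim_{R\to\infty}\frac{1}{|\Phi_R|}\sum_{q\in\Phi_R}\ \prod_{p=1}^{k}\Big(\tilde f_p(T^{pq}z)\prod_{i=1}^{\ell}\tilde f_p\big(T^{(p+i)q+n+im}z\big)\Big).
\end{equation*}

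The key algebraic point is that the maps $S_i:=T^{1+i}\times\cdots\times T^{k+i}$ on $Z_\infty^{k}$ satisfy $S_i=T_\Delta^{\,i}S_0$, where $S_0:=T\times T^2\times\cdots\times T^k$, and all these commute. Hence the displayed average equals $\int (G^*)^{\otimes(\ell+1)}\,\diff\lambda_{n,m}$, where $\lambda_{n,m}$ is the Haar measure (unique by \cref{lemma Zd action in infinite set pronilsystem}) on the orbit closure of $(\mathbf z;\,T_\Delta^{\,n+m}\mathbf z,\dots,T_\Delta^{\,n+\ell m}\mathbf z)\in(Z_\infty^{k})^{\ell+1}$ under $S_0\times S_1\times\cdots\times S_\ell$, with $\mathbf z=(z,\dots,z)$. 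As this base point is obtained from the diagonal by applying the commuting translations $T_\Delta^{\,n+im}$ on the $i$‑th block, one has $\lambda_{n,m}=(\mathcal W_{n,m})_*\lambda_{0,0}$ for the homeomorphism $\mathcal W_{n,m}$ acting as $T_\Delta^{\,n+im}$ on the $i$‑th block and trivially on the $0$‑th, and therefore the integral equals $\int G^*\otimes\bigotimes_{i=1}^{\ell}\big(T_\Delta^{\,n+im}G^*\big)\,\diff\lambda_{0,0}$. It remains to show that the $\liminf_M\liminf_N$ of the double average of this quantity over $m,n$ is strictly positive. This I would obtain in the same manner as \cref{T-Delta-Furstenberg-cor}: apply \cref{T-Delta-Furstenberg} to the $(S_0\times\cdots\times S_\ell)$‑invariant measure $\lambda_{0,0}$ and to the product set $\prod_{i=0}^{\ell}(\supp\tilde f_1\times\cdots\times\supp\tilde f_k)$, with recurrence directions encoding the $T_\Delta$‑translations on the blocks $i=1,\dots,\ell$, to get a lower bound for the $m$‑average; feed the resulting positive‑density set of good $m$ into the quantitative single‑recurrence statement \cref{recurrence-prop-k-2} (see \cref{quantitative recurrence-prop-k-2}) applied to $\lambda_{0,0}$ to bring in $n$; and use \cref{appendix lemam to interchange limits} to justify interchanging the limits over $M$, $N$ and the inner parameter $R$.

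The main obstacle I anticipate is verifying the hypothesis of \cref{T-Delta-Furstenberg} in this last step, namely $\int (G^*)^{\otimes(\ell+1)}\,\diff\lambda_{0,0}>0$: this is a multiple‑recurrence positivity across the $\ell+1$ blocks and does not follow merely from $\int F\,\diff\sigma_k>0$, so it must be produced by a preliminary Furstenberg‑recurrence argument, in the spirit of the initialization statement \cref{Initialization-step} but for the product system. A secondary point is making the replacement $f_p\leadsto\tilde f_p$ uniform enough in $m$ to survive the outer average. An alternative that avoids isolating $\lambda_{0,0}$ is to carry the $m$‑ and $n$‑recurrences directly at the level of $P_k^*\sigma$ (respectively $\sigma_k$), performing the shift ``by $i$ positions'' one block at a time via a multilinear analog of \cref{analogue of 6.2}; the shift is transparent precisely because $S_i=T_\Delta^{\,i}S_0$, and this route most closely mirrors the derivation of right‑progressiveness from \cref{analogue of 6.2} and \cref{recurrence-prop-k-2}.
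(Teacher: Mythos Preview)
Your primary route runs into exactly the obstacle you flag: the positivity of $\int (G^*)^{\otimes(\ell+1)}\,\diff\lambda_{0,0}$ is not a consequence of $\int F\,\diff\sigma_k>0$.  Unwinding the definitions, that integral equals $\int (\1\otimes G\otimes\1^{\otimes\ell})\cdot(\1^{\otimes2}\otimes G\otimes\1^{\otimes\ell-1})\cdots(\1^{\otimes\ell+1}\otimes G)\,\diff\sigma_{k+\ell}$, which is precisely the $n=m=0$ value of the very average whose positivity you are trying to establish; so invoking \cref{T-Delta-Furstenberg} at this point is circular.  There is also no evident ``preliminary Furstenberg recurrence'' producing it, because the hypothesis only controls the block-$0$ marginal of $\lambda_{0,0}$.

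Your alternative --- a multilinear analog of \cref{analogue of 6.2} that shifts each factor $T_\Delta^{n+im}(\1^{\otimes i}\otimes F\otimes\1^{\otimes\ell-i})$ back to position $0$ --- is exactly what the paper does.  It proves this shift lemma (\cref{mrp-lemma}) and then the $\sigma_{k+\ell}$-integral collapses to $\int F\cdot\prod_i T_\Delta^{n+im}F\,\diff\sigma_k$, to which \cref{T-Delta-Furstenberg-cor} applies directly; no auxiliary measure $\lambda_{0,0}$ is needed.  The ingredient you are missing, however, is that the single-average seminorm bound \cref{theorem 6.6 from kmrr} you cite is \emph{not} enough to justify the replacement $f_p\leadsto\tilde f_p$ inside the \emph{double} average: for fixed $m$, the inner $n$-average of $\prod_{i=1}^\ell T_\Delta^{n+im}F$ is a single orbit average of a product, and the bound from \cref{theorem 6.6 from kmrr} gives a seminorm of that product, not of the individual $f_p$.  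The paper supplies a new estimate (\cref{projection-lemma-for-qmri}) handling this: after interchanging the limits via \cite{Zorin-Kranich16}, a change of variables $n\mapsto n+um$ with $u$ large separates the $k\ell$ exponents $(uc_j+ic_j)m$, so that the inner $m$-average becomes a genuine multiple ergodic average with pairwise distinct shifts, to which the Host--Kra seminorm control applies.  Once this double-average seminorm bound is in hand, the reduction to $Z_\infty$ and the change of variables $m\mapsto m-j$ make the shift lemma transparent, exactly along the lines of your last paragraph.
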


Observe that for $\ell=1$ this implies \cref{recurrence-prop-k'}. This result 
allows us to utilize the following property that we naturally term multiple 
right-progressiveness.

\begin{defn} \label{def multiple right progressive}
    Let $(X,T)$ be a topological system. We say that a Borel probability measure $\tau\in \mathcal{M}(X^{\N_0})$ is \textit{multiple right-progressive} if for each $k,\ell \in \N$ and any open sets $U_1,\ldots,U_k\subset X$ with 
    $$\tau_k(X\times U_1\times \cdots \times U_k)>0, $$
    there exist infinitely many $n,m\in \N$ such that 
    $$\tau_{k+\ell}\left((X\times U \times X^{\ell})\cap T_\Delta^{-(n+m)}(X^2 \times U\times X^{\ell-1}) \cap \cdots \cap T_\Delta^{-(n+\ell m)}(X^{\ell+1}\times U)\right)>0,$$
    where $U=U_1\times \cdots \times U_k$ and $\tau_k$ and $\tau_{k+\ell}$ are the projections onto the first $(k+1)$ and $(k+\ell+1)$ coordinates of $\tau$, respectively.
\end{defn}
\begin{remark*}
We want to emphasize that \cref{multiple-recurrence-T-Delta-uncoupled} 
implies that $\sigma$ is multiple right-progressive. Moreover, note 
that for $\ell=1$, this property coincides with right-progressiveness. 
\end{remark*}
\begin{remark}\label{new proof of initialization-step}
    The initialization step, i.e. \cref{Initialization-step}, for the proofs of \cref{topological pronilfactors 2} and \cref{topological pronilfactors 1}, can be obtained as a consequence of multiple right-progressiveness and left-progressiveness. Indeed, by multiple right-progressiveness for the case $k=1$, $\ell \in \N$ and $U_1=E$ an open set with $\mu(E)>0$, we obtain infinitely many $n,m\in \N$ such that
    $$\sigma_{\ell +1}(X \times E\times T^{-(n+m)}E \times  T^{-(n+2m)}E \times \cdots \times  T^{-(n+\ell m)}E)>0.   $$
    Disregarding the second coordinate and using left progressiveness, we find $\tilde{n}$ such that
    \begin{align*}
        0&<\sigma_{\ell }(X \times T^{-(\tilde{n}+m)}E \times  T^{-(\tilde{n}+2m)}E \times \cdots \times  T^{-(\tilde{n}+\ell m)}E)\\
        &= \sigma_{\ell}((I_d\times \tilde{T})^{-m}(X \times T^{-\tilde{n}}E \times  T^{-\tilde{n}}E \times \cdots \times  T^{-\tilde{n}}E))\\
        &= \sigma_{\ell}(X\times T_\Delta^{-\tilde{n}} (E\times \dots \times E)),
    \end{align*}
    obtaining the conclusion of \cref{Initialization-step}. 
\end{remark}
The proof of \cref{multiple-recurrence-T-Delta-uncoupled} for infinite-step pronilsystems is straightforward from the definition of $\sigma$. In fact, in this special case we do not need to average along the transformation $T_\Delta^n$, and the result is a corollary of \cref{T-Delta-Furstenberg}. Our goal then is to show that the infinite-step pronilfactor is characteristic. To prove this, it would be enough to show the related ergodic averages are controlled by uniformity seminorms. As we show in \cref{subsec ex double average} these seminorms do not always control the single average, hence the need to introduce a second averaging over $T_\Delta^n$. However, for the double averages in \cref{multiple-recurrence-T-Delta-uncoupled} we have the following.

\begin{lemma}\label{projection-lemma-for-qmri}
Let $(X, \mu,T)$ be an ergodic system, $k,\ell \in \N$ with $k\geq 2$, let $\tau\in \cM(X^{k})$ be a $(T\times T^2 \times \cdots \times T^k)$-invariant measure with marginals $\tau_i \leq C \mu $ for some $C>0$. Moreover, let $f_1, \ldots, f_k  \in C(X)$ and $\Phi=(\Phi_N)_{N \in \N}$ a \Folner\ sequence in $\N$. If $F= f_1 \otimes  \cdots \otimes f_k $, then
\begin{equation*}
    \limsup_{M \to \infty} \limsup_{N \to \infty} \norm{ \frac{1}{|\Phi_M|} \sum_{m \in \Phi_M} \frac{1}{|\Phi_N|} \sum_{n \in \Phi_N} \prod_{i=1}^{\ell} T_\Delta^{n+im}  F }_{L^2(\tau)} \leq C_{k,\ell} \min_{1\leq j\leq k}\{\norm{f_j}_{U^{k \cdot \ell}(X,\mu,T)} \},
\end{equation*} 
where $ C_{k,\ell}>0$ is a constant that only depends on $k$ and $\ell$.
\end{lemma}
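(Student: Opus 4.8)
The plan is to follow the strategy of \cite[Theorem~6.6]{kmrr25} (recorded above as \cref{theorem 6.6 from kmrr}): repeated use of the van der Corput inequality, each step followed by an appeal to the $\tilde{T}$-invariance of $\tau$ and the marginal bounds $\tau_i\le C\mu$ in order to transfer the estimate to the ergodic base $(X,\mu,T)$, where the Host--Kra theory \cite{Host_Kra_nonconventional_averages_nilmanifolds:2005} (see also \cref{infinite step nilfactor and HK seminorms}) identifies the uniformity seminorms as the obstructions to smallness. The features not present in \cite{kmrr25} are that each tensor factor now carries the product $\prod_{i=1}^{\ell}T^{n+im}f_j$ of $\ell$ shifts rather than a single shift, and that there is a second average, over $m$. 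We may assume $\norm{f_j}_{\infty}\le 1$ for all $j$, and abbreviate $F=f_1\otimes\cdots\otimes f_k$.

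With the convention $T^n g=g\circ T^n$ one has
\[
\prod_{i=1}^{\ell}T_\Delta^{\,n+im}F \;=\; T_\Delta^{\,n}\Big(\prod_{i=1}^{\ell}T_\Delta^{\,im}F\Big),
\]
so the quantity to be bounded is the $L^2(\tau)$-norm of $\tfrac{1}{|\Phi_N|}\sum_{n}T_\Delta^{n}\big(\tfrac{1}{|\Phi_M|}\sum_{m}\prod_{i=1}^{\ell}T_\Delta^{im}F\big)$. The outer average over $n$ serves to compensate for $\tau$ being only $\tilde{T}$-invariant and not $T_\Delta$-invariant: whenever the van der Corput manipulations reduce matters to an integral of the form $\tfrac1{|\Phi_N|}\sum_n\int(G\circ T_\Delta^{n})\,d\tau'$, with $G$ a bounded product function and $\tau'$ a probability measure with marginals $\le C\mu$, the mean ergodic theorem along $\Phi$ and the ergodicity of $(X,\mu,T)$ give that the $j$-th marginal of $\tfrac1{|\Phi_N|}\sum_n (T_\Delta^{n})_*\tau'$ converges to $\mu$, so that, passing to a subsequence, $\tfrac1{|\Phi_N|}\sum_n (T_\Delta^{n})_*\tau'$ converges to a $T_\Delta$-invariant self-joining of $(X,\mu,T)$. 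After this averaging one is therefore reduced to working with a genuinely $T_\Delta$-invariant measure.

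On such a $T_\Delta$-invariant joining the inner average $\tfrac1{|\Phi_M|}\sum_m\prod_{i=1}^{\ell}T_\Delta^{im}F$ is an ordinary $\ell$-fold multiple ergodic average, and a van der Corput/PET induction in the variable $m$ — $\ell-1$ steps, peeling off one factor of the product at a time, exactly as in the proof that $\tfrac1M\sum_m\prod_{i=1}^{\ell}T^{im}g$ is controlled by $\norm{g}_{U^{\ell}}$ — bounds it by $U^{\ell}$-type seminorms of the tensor factors relative to the joining. Expressing these through Gowers--Host--Kra seminorms of the $f_j$ on $(X,\mu,T)$ costs a further factor of $k$ in the degree, by the argument of \cite[\S6.4]{kmrr25}: van der Corput in $n$, then the $\tilde{T}$-invariance and the marginal bounds to integrate out the $k$ coordinates one at a time, with \cref{appendix lemam to interchange limits} justifying the interchange of the resulting Ces\`{a}ro limits. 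Carrying the $k$ coordinates through the $\ell$-fold structure yields control by $\norm{f_j}_{U^{k\ell}(X,\mu,T)}$, which by monotonicity of the seminorms dominates whatever (possibly smaller) degree actually appears; the $\min$ over $1\le j\le k$ comes, as in \cite{kmrr25}, from the freedom in which coordinate to integrate out first, and the constant depends only on $k$ and $\ell$.

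The main obstacle is the interplay between the two averages and the accompanying degree bookkeeping: one must make the passage to a $T_\Delta$-invariant joining precise while controlling the coupling between the $n$- and $m$-variables and keeping all marginals uniformly bounded by $C\mu$, and one must organize the van der Corput/PET steps so that the final uniformity degree does not exceed $k\ell$. All remaining points are routine adaptations of \cite[\S6]{kmrr25}, together with the justification — via the marginal bounds and \cref{appendix lemam to interchange limits} — of the various exchanges of $\limsup_M$, $\limsup_N$ and Ces\`{a}ro limits.
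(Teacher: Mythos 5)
Your approach differs genuinely from the paper's, and it has a gap at its central step. The paper's proof never passes to a $T_\Delta$-invariant joining. Instead, after the least-common-multiple reduction (passing to the F\o lner sequence $\Psi_N=\Phi_N/c$ with $c=\mathrm{LCM}(2,\ldots,k)$ and appealing to \cite[Lemma 6.8]{kmrr25}), it rewrites $T_\Delta^{cn}$ in terms of $S = T\times T^2\times\cdots\times T^k$ with exponents $c_j=c/j$, invokes \cite[Theorem~1.1]{Zorin-Kranich16} to replace $\limsup_M\limsup_N$ by the iterated limit $\lim_N\lim_M$, and then performs the change of variables $n\mapsto n+um$ for a fixed $u>kc$. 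The point of this substitution is that the $k\ell$ exponents $(uc_j+ic_j)$, for $1\leq j\leq k$ and $1\leq i\leq\ell$, become pairwise distinct, so the inner $m$-average is a bona fide \emph{linear} multiple ergodic average with $k\ell$ distinct terms in the $S$-invariant system $(X^k,\tau,S)$. One then applies \cite[Lemma~6.7(iii)]{kmrr25} to get $U^{k\ell}(X^k,\tau,S)$-control of a single tensor factor, and \cite[Lemma~6.7(iv)]{kmrr25} to descend to $U^{k\ell}(X,\mu,T)$ with no change of degree. The $k\ell$ is simply a count of terms in the linearized average.

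Your proposal replaces this with: use the $n$-average to pass to a $T_\Delta$-invariant joining $\tau^*$, then run a PET induction in $m$, which controls the inner average by $\lVert F\rVert_{U^\ell(X^k,\tau^*,T_\Delta)}$, and then ``express this through $U^{k\ell}$-seminorms of the $f_j$, costing a factor of $k$.'' This last step is the gap. The PET in $m$ yields a seminorm of the \emph{tensor product} $F=f_1\otimes\cdots\otimes f_k$ relative to the joining $\tau^*$, and there is no general inequality relating $\lVert f_1\otimes\cdots\otimes f_k\rVert_{U^\ell(X^k,\tau^*,T_\Delta)}$ to $\min_j\lVert f_j\rVert_{U^{k\ell}(X,\mu,T)}$: on a non-product joining the $U^\ell$-seminorm of a tensor product does not factor, and degree bookkeeping for Gowers--Host--Kra seminorms does not compose multiplicatively under a change of system. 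The citation to \cite[\S6.4]{kmrr25} does not help, since that section establishes an equality of limits (the analogue of \cref{analogue of 6.2}), not a degree-increasing seminorm transfer. Moreover, your phrase ``van der Corput in $n$\ldots to integrate out the $k$ coordinates'' is unavailable after you have already consumed the $n$-average in the passage to $\tau^*$, so the variable over which this van der Corput is supposed to act is unclear. You also do not have a substitute for the Zorin-Kranich interchange $\limsup_M\limsup_N\to\lim_N\lim_M$; the subsequential compactness you invoke in the joining step is not enough to justify the order of limits that your argument ultimately needs. To repair the proof you would either need to supply the missing tensor-decoupling inequality (which I do not believe holds in the stated generality), or else adopt something like the paper's linearizing change of variables, which sidesteps the issue by turning the double product $\prod_j\prod_i$ into a single $k\ell$-term linear average before any seminorm machinery is invoked.
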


\begin{proof}
    Let $c = LCM(2, \ldots, k)$. Considering the F\o lner sequence $\Psi=(\Psi_N)_{N\in \N}$ given by 
\begin{equation*}
    \Psi_N = \Phi_N/c = \{ n : cn \in \Phi_N\},\ \text{for}\ N\in \N, 
\end{equation*}
and applying \cite[Lemma $6.8$]{kmrr25}, we get that 
\begin{align*}
     & \limsup_{M \to \infty} \limsup_{N \to \infty} \norm{ \frac{1}{|\Phi_M|} \sum_{m \in \Phi_M} \frac{1}{|\Phi_N|} \sum_{n \in \Phi_N} \prod_{i=1}^{\ell} T_\Delta^{n+im} F }_{L^2(\tau)}  \\
     &\leq \frac{1}{c^2} \sum_{r,r'=0}^{c-1} \left( \limsup_{M \to \infty} \limsup_{N \to \infty}  \norm{ \frac{1}{|\Psi_M|} \sum_{m \in \Psi_M} \frac{1}{|\Psi_N|} \sum_{n \in \Psi_N} \left( \prod_{j=1}^{k}\prod_{i=1}^{\ell} (T^{cn+icm+r+ir'} f_{j})^{[j]} \right) }_{L^2(\tau)}  \right),
\end{align*}
where for a function $f \in C(X)$ we denote $f^{[j]}(x_1, \ldots, x_k) = f(x_j)$, for $(x_1, \ldots, x_k) \in X^k$. Fixing $r,r' \in \{0, \ldots , c-1\}$ and letting $S= T \times T^2 \times \cdots \times T^k$,  and $c_j = c/j$ for $j=1, \ldots, k$, we get that 
\begin{align*}
    &\limsup_{M \to \infty} \limsup_{N \to \infty}  \norm{ \frac{1}{|\Psi_M|} \sum_{m \in \Psi_M} \frac{1}{|\Psi_N|} \sum_{n \in \Psi_N} \left( \prod_{j=1}^{k}\prod_{i=1}^{\ell}  (T^{cn+icm+r+ir'} f_{j})^{[j]} \right) }_{L^2(\tau)}  \\
    = & \limsup_{M \to \infty} \limsup_{N \to \infty}  \norm{ \frac{1}{|\Psi_M|} \sum_{m \in \Psi_M} \frac{1}{|\Psi_N|} \sum_{n \in \Psi_N} \left( \prod_{j=1}^{k} S^{c_jn} \left( \prod_{i=1}^{\ell} S^{ic_jm} (T^{r+ir'}f_{j})^{[j]} \right) \right) }_{L^2(\tau)}. 
\end{align*}
By \cite[Theorem 1.1]{Zorin-Kranich16}, the previous double limsup is actually a limit, and it is equal to  
\begin{equation}\label{eq 1 in seminorm control proof}
\lim_{N\to \infty}\lim_{M\to \infty} \norm{ \frac{1}{|\Psi_M|} \sum_{m \in \Psi_M} \frac{1}{|\Psi_N|} \sum_{n \in \Psi_N} \left( \prod_{j=1}^{k} S^{c_jn} \left( \prod_{i=1}^{\ell} S^{ic_jm} (T^{r+ir'}f_{j})^{[j]} \right) \right) }_{L^2(\tau)}.  
\end{equation}
Notice that for an integer $u>k c$, the function $(i,j)\in \{1, \ldots, \ell\}\times \{1, \ldots, k\}\mapsto uc_j + ic_j$ is injective. For a fixed such $u\in \N$, making the change of variables $n\mapsto n+ um$, we obtain that the quantity in \eqref{eq 1 in seminorm control proof} becomes
\begin{align*}
     &\lim_{N\to \infty}\lim_{M\to \infty}  \norm{ \frac{1}{|\Psi_M|} \sum_{m \in \Psi_M} \frac{1}{|\Psi_N|} \sum_{n \in \Psi_N} \left( \prod_{j=1}^{k} S^{c_jn} \left( \prod_{i=1}^{\ell} S^{(uc_j+ic_j)m} (T^{r+ir'}f_{j})^{[j]} \right) \right) }_{L^2(\tau)}\\
     &\leq \lim_{N\to \infty}\frac{1}{|\Psi_N|} \sum_{n \in \Psi_N} \lim_{M\to \infty}\norm{ \frac{1}{|\Psi_M|} \sum_{m \in \Psi_M}  \left( \prod_{j=1}^{k} S^{c_jn} \left( \prod_{i=1}^{\ell} S^{(uc_j+ic_j)m} (T^{r+ir'}f_{j})^{[j]} \right) \right) }_{L^2(\tau)}\\
    &\leq C_{k,\ell}'   \lim_{N\to \infty}\frac{1}{|\Psi_N|} \sum_{n \in \Psi_N} \min \left\{ \norm{S^{c_jn}(T^{r+ir'}f_{j})^{[j]}}_{U^{k \cdot \ell}(X^k,\tau, S)} \colon 1 \leq j \leq k , 1\leq i \leq \ell \right\} \\
    &= C_{k,\ell}' \min \left\{ \norm{(T^{r+ir'}f_{j})^{[j]}}_{U^{k \cdot \ell}(X^k,\tau, S)} \colon 1 \leq j \leq k , 1\leq i \leq \ell \right\},
\end{align*}
 where the last inequality and the constant $C_{k,\ell}'>0$ come from \cite[Lemma 6.7 (iii)]{kmrr25}.
Finally, by \cite[Lemma $6.7$(iv)]{kmrr25}, there exists a constant 
$C_{k,\ell}''>0$ such that
$$\norm{(T^{r+ir'}f_{j})^{[j]}}_{U^{k \cdot \ell}(X^k,\tau, S)} = \norm{T^{r+ir'}f_{j}}_{U^{k \cdot \ell}(X,\tau_j, T^j)} \leq C_{k,\ell}'' \norm{f_{j}}_{U^{k \cdot \ell}(X,\mu, T)}$$
and so we conclude the result by defining $C_{k,\ell}=C_{k,\ell}'\cdot C_{k,\ell}''>0$.
\end{proof}

The fact that we can project to the infinite-step pronilfactor allows 
us to prove the following result. 

\begin{lemma}\label{mrp-lemma}
    If $k,\ell \in \N$ with $k\geq 2$, $F\in C(X^{k})$ and $(\Phi_N)_{N\in \N}$ is a \Folner\ sequence, then  
    \begin{align*}
      \lim_{M\to \infty} \limsup_{N\to \infty}\Bigg|\Bigg|& \frac{1}{|\Phi_M|} \sum_{m\in \Phi_N} \frac{1}{|\Phi_N|} \sum_{n\in \Phi_N}  \prod_{i=1}^{\ell} T_\Delta^{n+im}  (\1 \otimes F\otimes \1^{\ell}) - \\
      &\frac{1}{|\Phi_M|} \sum_{m\in \Phi_M} \frac{1}{|\Phi_N|} \sum_{n\in \Phi_N} \prod_{i=1}^{\ell} T_\Delta^{n+im}  (\1^{1+i} \otimes F\otimes \1^{\ell-i})\Bigg|\Bigg|_{L^2(\sigma_{k+\ell})}=0. 
    \end{align*}

\end{lemma}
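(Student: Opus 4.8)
The strategy is to reduce, by routine approximations, to the case where $F$ is pulled back from the infinite-step pronilfactor, and then to compute the two averages explicitly against $\sigma_{k+\ell}$, observing that the passage from the first term to the second is, after unwinding the definition of $\sigma_{k+\ell}$, a reparametrization that does not change the group of nil-translations governing the relevant ergodic average.

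First I would reduce to $F=f_1\otimes\cdots\otimes f_k$ with $f_1,\dots,f_k\in C(X)$: each of the two averages depends on $F$ as a continuous $\ell$-homogeneous expression bounded on bounded subsets of $C(X^k)$, so this follows from the Stone--Weierstrass theorem. Next I would reduce to the case $f_j=\tilde f_j\circ\pi_{\infty}$ with $\tilde f_j\in C(Z_{\infty})$. Writing $f_j=g_j+h_j$ with $g_j=\tilde g_j\circ\pi_{\infty}$, $\norm{g_j}_\infty\le\norm{f_j}_\infty$, and $\norm{g_j-\E(f_j\mid Z_{\infty})}_{L^2(\mu)}$ as small as we wish, \cref{infinite step nilfactor and HK seminorms} gives $\norm{f_j-\E(f_j\mid Z_{\infty})}_{U^{s}(X,\mu,T)}=0$ for every $s$, hence (by the Gowers--Cauchy--Schwarz inequality $\norm{h}_{U^{s}}^{2^{s}}\le\norm{h}_{L^2(\mu)}^{2}\norm{h}_\infty^{2^{s}-2}$) that $\norm{h_j}_{U^{s}(X,\mu,T)}$ can be made arbitrarily small for each fixed $s$. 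Expanding both averages multilinearly, we obtain the averages for $g_1\otimes\cdots\otimes g_k$ plus a bounded number of averages in which some factor in the product over $i$ carries a tensor containing an entry $h_{j_0}$. Each such leftover average is a double average of products of polynomial nil-orbits of the type controlled by the proof of \cref{projection-lemma-for-qmri} (via \cite[Theorem~1.1]{Zorin-Kranich16} and the lemmas of \cite{kmrr25} used there); applied to the appropriate marginals of $\sigma_{k+\ell}$ (coordinates $1,\dots,k$ for the first term, coordinates $2,\dots,k+\ell$ for the second), it is bounded by $C\cdot\min\{\norm{\cdot}_{U^{k\ell}(X,\mu,T)}\}$ over its entries, which is small since one entry is $h_{j_0}$. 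Hence the leftover averages contribute $0$ in the limit for both expressions, and it suffices to prove the lemma when $F=\bigotimes_{j}(\tilde f_j\circ\pi_{\infty})$.

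For such $F$ both averages, and hence their difference, are pulled back from $Z_{\infty}^{k+\ell+1}$ under $\pi_{\infty}^{k+\ell+1}$, so the squared $L^2(\sigma_{k+\ell})$-norm in the statement can be evaluated using that the first marginal of $\sigma_{k+\ell}$ is $\delta_a$ and the marginal on coordinates $1,\dots,k+\ell$ is $\lim_{N'}\frac{1}{|\Phi_{N'}|}\sum_{n_0\in\Phi_{N'}}\delta_{(T^{n_0}\pi_{\infty}(a),\dots,T^{(k+\ell)n_0}\pi_{\infty}(a))}$, where $a\in\gen(\mu,\Phi)$ is the generic point used to define $\sigma$. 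Unwinding $T_\Delta^{n+im}(\1\otimes F\otimes\1^{\ell})$ and $T_\Delta^{n+im}(\1^{1+i}\otimes F\otimes\1^{\ell-i})$ at the point $(T^{n_0}\pi_{\infty}(a),\dots,T^{(k+\ell)n_0}\pi_{\infty}(a))$, the two terms become respectively $P_1(n,m,n_0)=\prod_{i=1}^{\ell}\prod_{j=1}^{k}\tilde f_j\big(T^{n+im+jn_0}\pi_{\infty}(a)\big)$ and $P_2(n,m,n_0)=\prod_{i,j}\tilde f_j\big(T^{n+i(m+n_0)+jn_0}\pi_{\infty}(a)\big)=P_1(n,m+n_0,n_0)$; so the crucial point is that $P_2$ is $P_1$ with $m$ replaced by $m+n_0$. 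Expanding the square produces four averages, with coefficients $+1,-1,-1,+1$, each of the form $\frac{1}{|\Phi_M|^2|\Phi_N|^2}\sum_{n,m,n',m'}\big[\lim_{N'}\frac{1}{|\Phi_{N'}|}\sum_{n_0}P_{\bullet}(n,m,n_0)\overline{P_{\bullet}(n',m',n_0)}\big]$. Each summand $P_{\bullet}(n,m,n_0)\overline{P_{\bullet}(n',m',n_0)}$ is $\bigotimes_{i,j}(\tilde f_j\otimes\overline{\tilde f_j})$ evaluated along a $\Z^{5}$-sequence of nil-translations of the point $(\pi_{\infty}(a),\dots,\pi_{\infty}(a))\in Z_{\infty}^{2k\ell}$, with translation directions indexed by $n,m,n',m',n_0$; the substitution $m\mapsto m+n_0$ (and/or $m'\mapsto m'+n_0$) that turns $P_1$ into $P_2$ merely replaces the generator in the $n_0$-direction by its product with the generators in the $m$- and/or $m'$-directions, and therefore does not change the group generated, hence not the orbit closure of the base point under the $\Z^{5}$-action. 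By \cref{lemma Zd action in infinite set pronilsystem} this orbit closure is uniquely ergodic and the associated Følner averages are independent of the Følner sequence; consequently all four averages converge — and the iterated limit $\lim_M\limsup_N\lim_{N'}$ agrees with the joint $\Z^{5}$-Følner limit, with $\limsup_N=\lim_N$ — to the same value $\int\bigotimes_{i,j}(\tilde f_j\otimes\overline{\tilde f_j})\,d\rho$ for the relevant Haar measure $\rho$. Since $+1-1-1+1=0$, the squared norm is $0$, proving the lemma.

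\textbf{Main difficulty.} The delicate part is the last paragraph: one must set up the exponents and the $\Z^{5}$-action correctly, verify that the $n_0$-direction generator changes only up to multiplication by the commuting $m$- and $m'$-generators, and justify interchanging the nested limits $\lim_M\limsup_N$ with the auxiliary $\xi$-average $\lim_{N'}$. Unique ergodicity of nil-orbit closures (\cref{lemma Zd action in infinite set pronilsystem}) is precisely what makes this work: a naive estimate comparing the Følner sets $\Phi_M$ and $\Phi_M+n_0$ fails because the $\xi$-variable $n_0$ is unbounded, whereas Følner-independence of the limit is insensitive to this. A minor secondary point is that the seminorm estimate used for the second term in the reduction is not literally \cref{projection-lemma-for-qmri} — there $F$ occupies a fixed block of coordinates, here its position depends on $i$ — but its proof extends verbatim.
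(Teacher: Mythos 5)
Your proposal is correct and follows essentially the same strategy as the paper's proof: reduce by approximation to $F$ a tensor product, then reduce further to $F$ pulled back from $Z_\infty$ using the seminorm control of \cref{projection-lemma-for-qmri}, and finally exploit unique ergodicity of nil-orbit closures (\cref{lemma Zd action in infinite set pronilsystem}) together with the elementary observation that the two terms differ only by a Følner-negligible shift in the $m$-variable by the auxiliary $\xi$-variable.

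The final computation is carried out differently. The paper keeps the squared $L^2(\xi_{k+\ell})$-norm of the \emph{difference} intact, writes it as an iterated limit $\lim_{N,M}\lim_J$, interchanges $\lim_M$ and $\lim_J$ by invoking unique ergodicity, and then makes the change of variables $m\mapsto m-j$, after which the difference of the two Følner sums vanishes as $M\to\infty$ by the asymptotic invariance of the Følner sequence. You instead expand $\lVert A-B\rVert^2$ into the four inner products, exhibit each as a Følner average of a continuous function along a $\Z^5$ nil-action on $Z_\infty^{2k\ell}$, and observe that the substitution $m\mapsto m+n_0$ replaces the $n_0$-generator by its product with commuting $m$- and $m'$-generators, leaving the generated group and hence the orbit closure and Haar measure unchanged, so that all four inner products converge to a common value. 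Both routes buy the same thing and both need to justify an interchange of nested limits via unique ergodicity; yours makes the group-theoretic mechanism visible, while the paper's is more economical. One small merit of your write-up is that you explicitly flag that \cref{projection-lemma-for-qmri}, as stated, does not literally cover the second term (where the block occupied by $F$ drifts with $i$) and note that its proof extends — the paper uses the lemma in the same way without remarking on this. This observation is correct and worth recording.
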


\begin{proof}
We follow the proof of \cite[Theorem 6.2]{kmrr25}. By an approximation argument, we can assume $F=f_1\otimes \cdots \otimes f_k$ for $f_1,\ldots,f_k\in C(X)$. Set $G=1\otimes g_1\otimes \cdots \otimes g_k$ where $g_i=\E( f_i | Z_\infty)$ for each $i\in [k]$. Using \cref{projection-lemma-for-qmri} and approximating $g_i$ by continuous functions since the marginals of $\xi$ satisfy $\xi_i \leq i m_{Z_{\infty}}$, we see that it is enough to prove the statement in $Z_\infty$ with $F=f_1\otimes \cdots \otimes f_k$ for $f_1,\ldots,f_k\in C(Z_\infty)$ and $a \in Z_{\infty}$. Using these reductions, by unique ergodicity of orbit closures in a pronilsystem, and the definition of $\xi_{k+\ell}$ it suffices therefore to show that
$$\norm{ \frac{1}{|\Phi_M|} \sum_{m\in \Phi_M} \frac{1}{|\Phi_N|} \sum_{n\in \Phi_N} \left[ \prod_{i=1}^{\ell} T_\Delta^{n+im}  (F\otimes \1^{\ell}) -\prod_{i=1}^{\ell} T_\Delta^{n+im}  (\1^{i} \otimes F\otimes \1^{\ell-i}) \right]}^2_{L^2(\xi_{k+\ell})}$$
vanishes as $N,M\to \infty$. The last expression becomes
\begin{align*}
      &\lim_{J\to \infty}\frac{1}{|\Phi_J|}\sum_{j\in \Phi_J} \Bigg| \frac{1}{|\Phi_M|}\sum_{m\in \Phi_M} \frac{1}{|\Phi_N|} \sum_{n\in \Phi_N}  \prod_{i=1}^{\ell} T_\Delta^{n+im}  F(T^ja,\ldots,T^{kj}a) \\
      &- \frac{1}{|\Phi_M|}\sum_{m\in \Phi_M} \frac{1}{|\Phi_N|} \sum_{n\in \Phi_N}\prod_{i=1}^{\ell} T_\Delta^{n+im}  F (T^{(i+1)j}a,\ldots, T^{(i+k)j}a)\Bigg|^2 \\
      &=\lim_{J\to \infty}\frac{1}{|\Phi_J|}\sum_{j\in \Phi_J} \Bigg| \frac{1}{|\Phi_M|}\sum_{m\in \Phi_M} \frac{1}{|\Phi_N|} \sum_{n\in \Phi_N}  \prod_{i=1}^{\ell} T_\Delta^{n}  F(T^{j+im}a,\ldots,T^{kj+im}a) \\
      &- \frac{1}{|\Phi_M|}\sum_{m\in \Phi_M} \frac{1}{|\Phi_N|} \sum_{n\in \Phi_N}\prod_{i=1}^{\ell} T_\Delta^{n}  F (T^{(i+1)j+im}a,\ldots, T^{(i+k)j+im}a)\Bigg|^2 
\end{align*}
We are thus left with showing that 
\begin{align*}
\lim_{N,M\to \infty}&\lim_{J\to \infty}\frac{1}{|\Phi_J|}\sum_{j\in \Phi_J} \Bigg| \frac{1}{|\Phi_M|}\sum_{m\in \Phi_M} \frac{1}{|\Phi_N|} \sum_{n\in \Phi_N}  \prod_{i=1}^{\ell} T_\Delta^{n}  F(T^{j+im}a,\ldots,T^{kj+im}a) \\
      &- \frac{1}{|\Phi_M|}\sum_{m\in \Phi_M} \frac{1}{|\Phi_N|} \sum_{n\in \Phi_N}\prod_{i=1}^{\ell} T_\Delta^{n}  F (T^{(i+1)j+im}a,\ldots, T^{(i+k)j+im}a)\Bigg|^2=0.  
\end{align*}
Arguing as in \cref{appendix lemam to interchange limits}, the unique ergodicity allows to exchange the order of the limits, so that the above becomes
\begin{align*}
\lim_{N,J\to \infty}&\lim_{M\to \infty}\frac{1}{|\Phi_J|}\sum_{j\in \Phi_J} \Bigg| \frac{1}{|\Phi_M|}\sum_{m\in \Phi_M} \frac{1}{|\Phi_N|} \sum_{n\in \Phi_N}  \prod_{i=1}^{\ell} T_\Delta^{n}  F(T^{j+im}a,\ldots,T^{kj+im}a) \\
      &- \frac{1}{|\Phi_M|}\sum_{m\in \Phi_M} \frac{1}{|\Phi_N|} \sum_{n\in \Phi_N}\prod_{i=1}^{\ell} T_\Delta^{n}  F (T^{(i+1)j+im}a,\ldots, T^{(i+k)j+im}a)\Bigg|^2=0.  
\end{align*}
Making the change of variables $m\mapsto m-j$ allows us to conclude, because 
$$ \Bigg| \frac{1}{|\Phi_M|} \sum_{m\in \Phi_M}  \prod_{i=1}^{\ell} T_\Delta^{n}  F(T^{j+im}a,\ldots,T^{kj+im}a) - \frac{1}{|\Phi_M|} \sum_{m\in \Phi_M+j}\prod_{i=1}^{\ell} T_\Delta^{n}  F (T^{j+im}a,\ldots, T^{kj+im}a)\Bigg|^2,$$
vanishes as $M\to \infty$, for any $n,j\in \N$, by the asymptotic invariance of \Folner\ sequences.
\end{proof}

We are finally in a position to prove \cref{multiple-recurrence-T-Delta-uncoupled}.
\begin{proof}[Proof of \cref{multiple-recurrence-T-Delta-uncoupled}]
We want to show that 
$$ \liminf_{M\to\infty}\liminf_{N\to\infty}\frac{1}{|\Phi_M|} \sum_{m\in \Phi_M}  \frac{1}{|\Phi_N|} \sum_{n\in \Phi_N} \int (F\otimes \1^{\otimes \ell}) \cdot \prod_{i=1}^{\ell}  (T_{\Delta}^{n+im} (\1^{\otimes i} \otimes F\otimes \1^{\otimes \ell -i})) \diff\sigma_{k+\ell}>0.$$ 
Under the assumptions of \cref{multiple-recurrence-T-Delta-uncoupled}, 
we can apply \cref{mrp-lemma} and the above averages become
$$ \liminf_{M\to\infty}\liminf_{N\to\infty}\frac{1}{|\Phi_M|} \sum_{m\in \Phi_M}  \frac{1}{|\Phi_N|} \sum_{n\in \Phi_N} \int (F\otimes \1^{\otimes \ell}) \cdot \prod_{i=1}^{\ell}  (T_{\Delta}^{n+im} (F\otimes \1^{\otimes \ell})) \diff\sigma_{k+\ell}.$$
Note that we can rewrite these averages as 
$$ \liminf_{M\to\infty}\liminf_{N\to\infty}\frac{1}{|\Phi_M|} \sum_{m\in \Phi_M}  \frac{1}{|\Phi_N|} \sum_{n\in \Phi_N} \int F \cdot \prod_{i=1}^{\ell}  T_{\Delta}^{n+im} F \diff \sigma_k$$
and the latter expression is positive by \cref{T-Delta-Furstenberg-cor}.
\end{proof}

\subsection{Proof of \cref{topological pronilfactors 1}}

Before presenting its proof, we restate \cref{topological pronilfactors 1} for 
the convenience of the reader.

\begin{named}{\cref{topological pronilfactors 1}}{}
Let $(X,\mu,T)$ be an ergodic system with topological infinite-step 
pronilfactor. Let $\Phi=(\Phi_N)_{N\in \N}$ be a \Folner\ sequence, 
let 
$a\in \gen(\mu,\Phi)$ and let $\sigma$ be the measure defined in \cref{definition of sigma}. Then, 
for every open set $E\subset X$ with $\mu(E)>0$ there exist sequences $(b_n)_{n\in \N}, (s_k)_{k\in \N}, (t_k)_{k\in \N}\subset \N$ such 
that 
    \begin{equation}\label{eq-measure-conj-1}
\sigma_{k+1}\left(\bigcap_{i=1}^k  \prod_{j=0}^{k+1}  \bigcap_{\ell=\max(j,1)}^{i+1}  T^{-\left( t_i+\ell s_i+ B_k^{\oplus \ell-j} \right)}  E   \right)>0
    \end{equation}
for every $k\in \N$, where $B_k=\{b_1,\ldots,b_{k}\}$.  
\end{named}

The proof of \cref{topological pronilfactors 1} can be summarized in the following diagram.

\begin{figure}[h!]
\centering
\resizebox{1\textwidth}{!}{%
\begin{tikzpicture}[
    font=\LARGE,
    >=Stealth,
    node distance=2cm and 3cm,
    box/.style={rectangle, draw, minimum width=4cm, minimum height=1.5cm, align=center}
]
\node[box] (A) {Initialization step};
\node[box, below=of A] (B) {Left-progressiveness};
\node[box, below=of B] (C) {Multiple \\ Right-progressiveness};

\draw[->] (A) -- (B);
\draw[->] (B.east) -- ++(2cm,0) |- (C.east);
\draw[->] (C.west) -- ++(-2cm,0) |- (B.west);

\draw[->, dashed] (A.east) -- ++(4cm,0) node[right]{$\begin{matrix}
    \text{Finds } t_1  \text{ such that} \\
    \sigma_2(X \times T^{-t_1}E \times T^{-t_1 }E) >0
\end{matrix} $};
\draw[->, dashed] (B.east) ++(2cm,-2cm) -- ++(2cm,0) node[right]{$\begin{matrix}
    \text{Finds } b_k, t_k, \text{ and } s_k  \text{ such that} \\
   \sigma_{k+1}\left(\bigcap_{i=1}^k  \prod_{j=0}^{k+1}  \bigcap_{\ell=\max(j,1)}^{i+1}  T^{-\left( t_i+\ell s_i+ B_k^{\oplus \ell-j} \right)}  E   \right)>0
\end{matrix} $};
\end{tikzpicture}}%
\caption{Diagram of the proof of \cref{topological pronilfactors 1}}
\label{fig:1}
\end{figure}
\begin{proof}
 Recall that $\sigma$ is progressive (see \cref{progressive proof}) as in Definition \ref{definition progressive measures in XN}. Take $s_1=0$. As $\mu(E)>0$, by \cref{Initialization-step}, there is $t_1\in \N$ such that 
    $$\sigma_2(X \times T^{-t_1}E \times T^{-t_1 }E) >0. $$
    By left-progressiveness of $\sigma_2$, there is $b_1\in \N$ such that 
    \begin{equation*}
     \sigma_2(T^{-(t_1+b_1)}E \times (T^{-(t_1
    +b_1)}E \cap T^{-t_1 }E) \times T^{-t_1 }E )>0. 
    \end{equation*}  
    If we set $B_1=\{b_1\}$, the previous is equivalent to 
    $$\sigma_{2}\left(\bigcap_{i=1}^1  \prod_{j=0}^{2}  \bigcap_{\ell=\max(j,1)}^{i+1}  T^{-\left( t_i+\ell s_i+ B_1^{\oplus \ell-j} \right)}  E   \right)>0, $$
    in accordance with the notation in \eqref{eq-measure-conj-1}.
    If we let $\tilde{E}_1=X \times T^{-(t_1+b_1)}E \times T^{-t_1 }E$ and $E_1=T^{-(t_1+b_1)}E \times (T^{-(t_1+b_1)}E \cap T^{-t_1 }E) \times T^{-t_1 }E$, then we can use the multiple right-progressiveness of $\sigma$ to find $s_2>s_1$ and $t'\in \N$ such that 
    $$\sigma_{5}((E_1\times X^3) \cap T_\Delta^{-(t'+s_2)} (X\times \tilde{E}_1\times X^2)\cap T_\Delta^{-(t'+2s_2)} (X^2\times \tilde{E}_1\times X)\cap T_\Delta^{-(t'+3s_2)} (X^3\times \tilde{E}_1 ) )>0. $$
    If we let $\tilde{t}_2=t'+t_1$, 
    observe that the set 
    $$ T_\Delta^{-(t'+s_2)} (X\times \tilde{E}_1\times X^2)\cap T_\Delta^{-(t'+2s_2)} (X^2\times \tilde{E}_1\times X)\cap T_\Delta^{-(t'+3s_2)} (X^3\times \tilde{E}_1 )$$
    is actually equal to the set
    \begin{align*}
      &(X^2 \times T^{-(\tilde{t}_2+s_2+b_1)}E \times T^{-(\tilde{t}_2+s_2)} E \times X^2)\cap (X^3 \times T^{-(\tilde{t}_2+2s_2+b_1)}E \times  T^{-(\tilde{t}_2+2s_2)} E\times X)\\& \cap (X^4 \times T^{-(\tilde{t}_2+3s_2+b_1)}E \times T^{-(\tilde{t}_2+3s_2)}E).  
    \end{align*}
    Thus, we have that the $\sigma_5$ measure of the intersection of $(E_1\times X^3)$ with the set
    $$  X^2 \times T^{-(\tilde{t}_2+s_2+b_1)}E \times (T^{-(\tilde{t}_2+2s_2+b_1)} E\cap T^{-(\tilde{t}_2+s_2)}E) \times  (T^{-(\tilde{t}_2+3s_2+b_1)} E\cap T^{-(\tilde{t}_2+2s_2)}E) \times T^{-(\tilde{t}_2+3s_2)}E$$
    is positive. By left-progressiveness (applied twice, only to the latter product set), we can find $t_2'$ so that if $t_2=t_2'+\tilde{t}_2$, then the $\sigma_3$ measure of the intersection of $(E_1\times X)$ with
    $$ T^{-(t_2+s_2+b_1)}E \times (T^{-(t_2+2s_2+b_1)} E\cap T^{-(t_2+s_2)}E) \times  (T^{-(t_2+3s_2+b_1)} E\cap T^{-(t_2+2s_2)}E) \times T^{-(t_2+3s_2)}E$$
    is positive. Applying left progressiveness once again to the last intersection and using the definition of $E_1$, we find $b_2>b_1$ such that the $\sigma_3$ measure of     
    \begin{align*}
    &  \left( (T^{-(t_1
    +b_1)}E \cap T^{-(t_1
    +b_2)}E \cap T^{-(t_1
    +b_2+b_1)}E) \times (T^{-(t_1
    +b_2)}E \cap  T^{-(t_1+b_1)}E \cap T^{-t_1 }E) \times T^{-t_1 }E \times X \right)  \cap \\
   &\Big( (T^{-(t_2+s_2+b_1)}E \cap T^{-(t_2+2s_2+b_1+b_2)} E\cap  T^{-(t_2+s_2+b_2)}E)  \times  \\ &   (T^{-(t_2+2s_2+b_1)} E\cap T^{-(t_2+s_2)}E \cap T^{-(t_2+3s_2+b_1+b_2)} E\cap T^{-(t_2+2s_2+b_2)} E )\times \\
   & (T^{-(t_2+3s_2+b_1)} E\cap T^{-(t_2+2s_2)}E \cap T^{-(t_2+3s_2+b_2)}E) \times T^{-(t_2+3s_2)}E \Big) 
    \end{align*}
is positive. If we let $B_2=\{b_1,b_2\}$, the previous is equivalent to the following:
     $$\sigma_{3}\left(\bigcap_{i=1}^2  \prod_{j=0}^{3}  \bigcap_{\ell=\max(j,1)}^{i+1}  T^{-\left( t_i+\ell s_i+ B_2^{\oplus \ell-j} \right)}  E   \right)>0. $$
In this fashion we   construct the sequences of the statement inductively. Assume that we have created $b_1<\cdots<b_k$, $t_1<\cdots<t_k$ and 
    $ s_1<\cdots<s_k$ such that 
    \begin{equation}\label{eq-measure-conj-2}
\sigma_{k+1}\left(\bigcap_{i=1}^k  \prod_{j=0}^{k+1}  \bigcap_{\ell=\max(j,1)}^{i+1}  T^{-\left( t_i+\ell s_i+ B_k^{\oplus \ell-j} \right)}  E   \right)>0,
    \end{equation}
    where $B_k=\{b_1,\ldots,b_k\}$.
Call $$E_k=   \prod_{j=0}^{k+1}    T^{-\left( t_k+(k+1) s_k+ B_k^{\oplus k+1-j} \right)}  E =\prod_{j=0}^{k+1}    T^{-\left( \tilde{t}_k+ B_k^{\oplus k+1-j} \right)}  E ,  $$ 
where $\tilde{t}_k=t_k+(k+1) s_k $. Notice that
$$ E_k\supset   \prod_{j=0}^{k+1}  \bigcap_{\ell=\max(j,1)}^{k+1}  T^{-\left( t_k+\ell s_k+ B_k^{\oplus \ell-j} \right)}  E  \supset  \bigcap_{i=1}^k  \prod_{j=0}^{k+1}  \bigcap_{\ell=\max(j,1)}^{i+1}  T^{-\left( t_i+\ell s_i+ B_k^{\oplus \ell-j} \right)}  E .$$
Using multiple right-progressiveness in \eqref{eq-measure-conj-2} with $E_k$ 
solely, we find $s_{k+1}>s_k$ and $t'\in \N$ such that the $\sigma_{2k+3}$ measure of the intersection of
$$\left(\bigcap_{i=1}^k  \prod_{j=0}^{k+1}  \bigcap_{\ell=\max(j,1)}^{i+1}  T^{-\left( t_i+\ell s_i+ B_k^{\oplus \ell-j} \right)}  E \right) \times X^{k+2}$$
with 
$$T_\Delta^{-(t'+s_{k+1})} (X\times  E_k\times X^{k+1}) \cap \cdots \cap T_{\Delta}^{-(t'+(k+2)s_{k+1})} (X^{k+2} \times E_k)$$
is positive. We notice that the set 
$$X^{k+1}\times \prod_{j=0}^{k+2}  \bigcap_{\ell=\max(j,1)}^{k+2}  T^{-\left( \tilde{t}_k+t'+\ell s_{k+1}+ B_k^{\oplus \ell-j} \right)}  E  $$
contains the set 
\begin{equation}\label{set-MRP}
     T_\Delta^{-(t'+s_{k+1})} (X\times  E_k\times X^{k+1}) \cap \cdots \cap T_{\Delta}^{-(t'+(k+2)s_{k+1})} (X^{k+2} \times E_k).
\end{equation}
as the set in the $(k+2+j)$-th coordinate of the set in \eqref{set-MRP} is
$$\bigcap_{\ell=\max(j,1)}^{k+2}  T^{-\left( \tilde{t}_k+t'+\ell s_{k+1}+ B_k^{\oplus \ell-j} \right)}  E,  $$
for each $j\in \{0,\ldots,k+2\}$. To see this, notice, for example, that the $(k+2)$-th coordinate of the set in \eqref{set-MRP} is given by intersecting the $(k+2-\ell)$-th coordinates of $E_k$ under $T^{(-t'+\ell s_{k+1})}$ for $\ell=1,\ldots,k+1$, and so it equals 
$$\bigcap_{\ell=1}^{k+1} T^{-(t'+\ell s_{k+1})}  T^{-\left( \tilde{t_k} + B_k^{\oplus k+1-(k+2-\ell-1)} \right)}  E = \bigcap_{\ell=1}^{k+2} T^{-\left( \tilde{t_k}+t'+\ell s_{k+1} + B_k^{\oplus \ell} \right)}  E,$$
since $B_k^{k+2}=\emptyset$.

Hence, by left-progressiveness applied $k+1$ times and disregarding the last coordinates, we can find $t_{k+1}>t_k$ (which groups each $\tilde{t}_k,t'$ and the shift given by the $(k+1)$ applications of left-progressiveness) such that 
\begin{align*}
  &\sigma_{k+2} \left(\left(\bigcap_{i=1}^k  \prod_{j=0}^{k+2}  \bigcap_{\ell=\max(j,1)}^{i+1}  T^{-\left( t_i+\ell s_i+ B_k^{\oplus \ell-j} \right)}  E \right) \cap \left(\prod_{j=0}^{k+2}  \bigcap_{\ell=\max(j,1)}^{k+2}  T^{-\left( t_{k+1}+\ell s_{k+1}+ B_k^{\oplus \ell-j} \right)}  E  \right)\right)  \\
  &=\sigma_{k+2 }\left( \bigcap_{i=1}^{k+1}  \prod_{j=0}^{k+2}  \bigcap_{\ell=\max(j,1)}^{i+1}  T^{-\left( t_i+\ell s_i+ B_k^{\oplus \ell-j} \right)}  E  \right)>0.
    \end{align*}
    Now, by left-progressiveness once again, we obtain $b_{k+1}>b_k$ such that the $\sigma_{k+2}$ measure of
    \begin{align*}
      &\left(\bigcap_{i=1}^{k+1}  \prod_{j=0}^{k+2}  \bigcap_{\ell=\max(j,1)}^{i+1}  T^{-\left( t_i+\ell s_i+ B_k^{\oplus \ell-j} \right)}  E \right) \cap \left(\bigcap_{i=1}^{k+1}  \prod_{j=0}^{k+2}  \bigcap_{\ell=\max(j+1,1)}^{i+1}  T^{-\left( t_i+\ell s_i+b_{k+1}+ B_k^{\oplus \ell-j-1} \right)}  E   \right)\\
      &=\left( \bigcap_{i=1}^{k+1}  \prod_{j=0}^{k+2} \left( \bigcap_{\ell=\max(j,1)}^{i+1}  T^{-\left( t_i+\ell s_i+ B_k^{\oplus \ell-j} \right)}  E \cap  \bigcap_{\ell=j+1}^{i+1}  T^{-\left( t_i+\ell s_i+b_{k+1}+ B_k^{\oplus \ell-j-1} \right)}  E  \right)  \right) 
    \end{align*} 
    is positive.
    If we set $B_{k+1}=B_k\cup \{b_{k+1}\}$, we then have that
    $$
        0<\sigma_{k+2}\left( \bigcap_{i=1}^{k+1}  \prod_{j=0}^{k+2}  \bigcap_{\ell=\max(j,1)}^{i+1}  T^{-\left( t_i+\ell s_i+ B_{k+1}^{\oplus \ell-j} \right)}  E  \right)
    $$
    concluding the induction. We have thus constructed sequences $(b_n)_{n\in \N}$ and $(t_k)_{k\in \N}, (s_k)_{k\in \N}$ such that \eqref{eq-measure-conj-1} holds for each $k\in \N$ and the proof is complete.
\end{proof}

 It is possible to find a set $B\subset \N$ that satisfies the statements of both \cref{theo-A} and \cref{theo-B} at the same time. As \cref{fig:2} and \cref{fig:1} show, the proofs of these theorems only differ in the iterative application of multiple right-progressiveness to find the shifts $t_k,s_k$ for \cref{theo-B}, and right-progressiveness to find the shift $t_k$ for \cref{theo-A}. Thus, we can use both multiple-right progressiveness and right progressiveness, in addition to left-progressiveness, to have, in each iteration, three shifts $t_k, t_k'$ and $s_k$ such that
    \begin{align*}
    \sigma_{k+\ell+1}\Bigg[& \Bigg( \bigcap_{i=1}^{k+1}  \prod_{j=0}^{k+\ell+1}\bigcap_{h=\max(j,i)}^{i+\ell}  T^{-\left( t_i'+ B_k^{\oplus h-j} \right)}  E  \Bigg)    \cap\\
    &\Bigg(\bigcap_{i=1}^k  \Bigg(\prod_{j=0}^{k+1}  \bigcap_{\ell=\max(j,1)}^{i+1}  T^{-\left( t_i+\ell s_i+ B_k^{\oplus \ell-j} \right)}  E   \Bigg) \times X^\ell \Bigg) \Bigg] >0, 
    \end{align*}
    which would imply that  the same infinite set $B\subset \N$ satisfies the conclusions of \cref{theo-B} and \cref{theo-A} simultaneously. Thus, we obtain the following result. 
\begin{prop}\label{Mix-of-Theo-A-and-B}
Let $A\subset\N$ be a set with $\diff^*(A)>0$ and $\ell\geq 0$ an integer. Then, there exist an infinite set $B\subset \N$ and sequences $(\tilde{t}_{k})_{k\in \N}$, $ (t_k)_{k\in \N}$, $(s_k)_{k\in \N} \subset \N$ such that 
$$ \bigg\{ \sum_{n\in F} n \colon \ F\subset B \text{ with } k\leq |F|\leq k+ \ell \bigg\} \subset A-\tilde{t}_{k}, \ \text{for every}\ k\in \N,$$
and
$$ \bigg\{ \sum_{n\in F} n \colon \ F\subset B+s_k \text{ with } 1\leq |F|\leq k \bigg\} \subset A-t_k ,\ \text{for every}\ k\in \N,$$
simultaneously.
\end{prop}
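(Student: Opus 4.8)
The plan is to follow the same route as for Theorems \ref{theo-A} and \ref{theo-B}: reduce the combinatorial statement to a dynamical one via Furstenberg's correspondence principle (\cref{FCP}) together with the passage to an extension with topological infinite-step pronilfactor (\cref{reduction to topological pronilfactors is possible}), and then prove the latter by an inductive construction exploiting the properties of the measure $\sigma$ of \cref{definition of sigma}. Concretely, I would fix an ergodic $(X,\mu,T)$ with topological infinite-step pronilfactor, a \Folner\ sequence $\Phi$, a point $a\in\gen(\mu,\Phi)$, an open $E\subset X$ with $\mu(E)>0$ and an integer $\ell\geq 1$, and aim to produce strictly increasing sequences $(b_n)$, $(\tilde t_k)$, $(t_k)$, $(s_k)$ in $\N$ such that, writing $B_k=\{b_1,\dots,b_k\}$, the combined inequality displayed just before the statement of \cref{Mix-of-Theo-A-and-B} holds for every $k$; that is, the set whose $\sigma_{k+\ell+1}$-measure we track is the intersection of the ``\cref{theo-A}-set'' from \cref{topological pronilfactors 2} (with shifts $\tilde t_i$) and the ``\cref{theo-B}-set'' from \cref{topological pronilfactors 1} (with shifts $t_i,\ell s_i$), padded by $X^\ell$ so that both live on the same coordinates, and built from one and the same sequence $(b_n)$. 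Projecting onto the $0$-th coordinate — whose $\sigma$-marginal is the Dirac mass $\delta_a$ — then gives $a\in T^{-(\tilde t_i+B^{\oplus h})}E$ for $i\le h\le i+\ell$ and $a\in T^{-(t_i+hs_i+B^{\oplus h})}E$ for $1\le h\le i+1$, with $B=(b_n)$, which, exactly as in \cref{translation to dynamics and some reductions}, yields both asserted inclusions for the same infinite set $B$. The case $\ell=0$ is immediate from the case $\ell=1$.

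For the combined dynamical statement I would run, on a single sequence $(b_n)$, the inductions behind \cref{topological pronilfactors 2} and \cref{topological pronilfactors 1} simultaneously. The base case is the initialization step (\cref{Initialization-step}), which produces $\tilde t_1$ and $t_1$ (with $s_1=0$) making both halves positive. In the inductive step, given $b_1<\dots<b_k$ and the shift data up to index $k$ with the combined positivity, I would: (i) apply multiple right-progressiveness of $\sigma$ (\cref{multiple-recurrence-T-Delta-uncoupled}, \cref{def multiple right progressive}) to the product sub-set of the \cref{theo-B}-half isolated exactly as in the proof of \cref{topological pronilfactors 1} (its set ``$E_k$''), obtaining $s_{k+1}>s_k$ and an auxiliary shift, then collapse the transient extra coordinates by repeated left-progressiveness to obtain $t_{k+1}>t_k$, thereby extending the \cref{theo-B}-half to index $k+1$; (ii) apply right-progressiveness of $\sigma$ (\cref{right-progressive proof}) to the product sub-set of the \cref{theo-A}-half isolated exactly as in the proof of \cref{topological pronilfactors 2}, obtaining $\tilde t_{k+1}>\tilde t_k$ and extending the \cref{theo-A}-half; (iii) apply left-progressiveness of $\sigma$ (\cref{left-progressive proof}) once to the entire tracked intersection, with the shift chosen to be a new $b_{k+1}>b_k$ (possible since infinitely many admissible $n$ exist): since left-progressiveness acts by $T_\Delta^{-n}$ on all coordinate blocks at once, this simultaneously inserts $b_{k+1}$ into every family $B_k^{\oplus h-j}$ in both halves, i.e.\ replaces $B_k$ by $B_{k+1}=B_k\cup\{b_{k+1}\}$, using the same algebraic identities on $B_k^{\oplus\cdot}$ as in the two earlier proofs. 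This closes the induction, and one reads off the sequences $(b_n),(\tilde t_k),(t_k),(s_k)$.

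The main obstacle is bookkeeping, together with checking that the two constructions do not interfere. At each application one must verify that the set handed to a progressiveness property has exactly the shape its hypothesis demands — i.e.\ that the relevant marginal of the tracked intersection onto the pertinent block of coordinates is the product set required — and that passing to such a sub-product-set preserves positivity (it does, being a marginal of the tracked set, which is precisely what licenses invoking the defining property, whose hypothesis only asks for $\tau(X\times U_1\times\cdots)>0$). These are the same coordinate computations already carried out in the proofs of \cref{topological pronilfactors 2} and \cref{topological pronilfactors 1}. The genuinely new point is that steps (i) and (ii) can be interleaved without damage: multiple right-progressiveness and right-progressiveness each only adjoin coordinates and intersect, so since we always carry the full intersection, extending one half leaves the other intact; and the concluding left-progressiveness of step (iii) legitimately updates $B_k\to B_{k+1}$ across the whole set because it shifts every coordinate uniformly. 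Finally, the marginal index needed at stage $k$ is the maximum of the demands of the two halves plus the transient coordinates introduced and then discarded by multiple right-progressiveness — a finite number — so everything stays within $\sigma_m$ for an appropriate finite $m$, and the argument runs as a bona fide induction.
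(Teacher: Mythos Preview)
Your proposal is correct and follows essentially the same route as the paper: reduce to a dynamical statement via \cref{FCP} and \cref{reduction to topological pronilfactors is possible}, then run the two inductions of \cref{topological pronilfactors 2} and \cref{topological pronilfactors 1} in tandem on a single tracked intersection, using right-progressiveness for the \cref{theo-A}-half, multiple right-progressiveness (plus the repeated left-shifts) for the \cref{theo-B}-half, and a single final application of left-progressiveness per step to choose a common $b_{k+1}$. Your identification of the key point --- that the concluding left-progressiveness acts by $T_\Delta^{-n}$ on all coordinates at once and hence updates $B_k\to B_{k+1}$ simultaneously in both halves --- is exactly what the paper uses, and your remark that the transient extra coordinates introduced by multiple right-progressiveness are harmless is correct.
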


\appendix





\section{Progressive measures, averages and seminorms} \label{appendix progresive measures in Zk}

\subsection{Progressive measures coming from finite-step nilsystems are distinct}

For an ergodic system $(X,\mu,T)$ with continuous pronilfactors, the progressive measures $\sigma^{Z_{k}}$ for $k \geq 1$ are defined in \cite{kmrr25} as follows: Consider $a \in \gen (\mu, \Phi)$ and $Z_{k}$ the $k$-step pronilfactor with continuous factor map $\pi_{k} \colon X \to Z_k $. Define $\xi^{Z_k} \in \cM(Z_k^{k})$ to be the Haar measure of the orbit closure of $(\pi_k(a), \ldots, \pi_k(a))$ under the transformation $T \times \cdots \times T^{k}$ and for $f_0, f_1, \ldots, f_{k} \in C(X)$ the measure $\sigma^{Z_k} \in \cM(X^{k+1})$ is defined via
\begin{equation} \label{eq progressive measure in Zk}
    \int_{X^{k+1}} f_0 \otimes f_1 \otimes \cdots \otimes f_{k} \diff \sigma^{Z_k} = f_0(a) \int_{Z_k^{k}}  \E(f_1 \mid Z_k) \otimes \cdots \otimes \E(f_{k} \mid Z_k) \diff \xi^{Z_k}.
\end{equation}

Notice that even though it is not explicit in the notation, the measure $\sigma^{Z_{k}}$ depends on the choice of the point $a \in \gen (\mu, \Phi)$.

In this appendix we want to illustrate that there are cases where if $P_{k+1} \colon X^{k+2} \to X^{k+1}$ is the projection to the first $k+1$ coordinates, then $P_{k+1} \sigma^{Z_{k+1}}$ and $\sigma^{Z_{k}}$ are distinct (see also \cite[Example 3.19]{ackelsberg_jamnesham2025equidistribution}).

Fix $s \geq 2$ and let $X = \T^s $ and $T \colon X \to X$ be the affine transformation given by $T(x_1, \ldots,x_s) = (x_1 + \alpha, x_2 + 2 x_1 + \alpha, \ldots, x_s + s x_{s-1} + \binom{s}{2} x_{s-2} + \cdots + s x_1 + \alpha)$. This transformation has been repeatedly used in the literature; see, for example, \cite[Proposition 3.11]{Furstenberg81}. In particular if $a= (0,\ldots, 0)$ then $T^n a = (n \alpha, n^2 \alpha, \ldots, n^s \alpha)$

Notice that the factors $Z_k(X)$ for $k \leq s$ in this case are the systems $Z_k = \T^k$ with factor maps $\pi_k \colon X \to Z_k$ given by projecting onto the first $k$ coordinates. Moreover, for every $k \leq s$, $\sigma^{Z_{k+1}}$ is the Haar measure on the manifold
\begin{equation*}
    \Omega_k = \left\{ \left(\begin{array}{c}
         v_0 \\
         v_1 \\
         \vdots \\
         v_{k+1}
    \end{array} \right)
     \in (\T^s)^{k+2} \middle| \begin{array}{c}
          v_0 = a, \\
          v_r = ( r t_1, r^2 t_2, \ldots, (r+1)^rt_k, u_{r,1}, \ldots,u_{r,s+1-k}),  \\
           t_1, \dots, t_k \in \T \quad \text{and} \\
           \quad u_{r,1}, \ldots, u_{r,s+1-k} \in \T \quad \text{for } r=1,\dots k+1
     \end{array} \right\}
\end{equation*}
where we recall that $a= (0,\ldots,0) \in \T^s=X$. 
For instance, when $s=3$,

\begin{align*}
    \Omega_1 &= \{(0,0,0, \ t, u,v, \ 2t, w,w'  ) \mid t, u,v,w,w' \in \T \} \subset X^{1+2} \\ \\
   \Omega_2  &= \{(0,0,0, \ t,r, u, \ 2t, 4r, v, \ 3t,9r,w ) \mid t,r, u,v,w \in \T \} \subset X^{2+2} \\ \\
    \Omega_3  &= \{(0,0,0,\ t,r, s, \ 2t, 4r, 8s,\ 3t,9r, 27s,\ 4t, 16r,64s) \mid t,r, s \in \T \} \subset X^{3+2} \\
\end{align*}

 Notice that $P_{k} ( \Omega_k)$, the image of $\Omega_k$ under the projection $P_{k} \colon X^{k+2} \to X^{k+1}$ is strictly contained in $\Omega_{k-1}$, so in particular they are distinct. Since Haar measures of nilmanifold are fully supported, one concludes that the measures $P_{k+1} \sigma^{Z_k}$ and $\sigma^{Z_{k-1}}$ are distinct for all $k \leq s$.

 For the same reason, in this example, the measures $\sigma^{Z_k}$ are not ergodic under the action of $Id \times T \times \cdots \times T^k$ for $k <s$. In fact, the nilmanifold $P_{s,k}(\Omega_s)$, that is the image of $\Omega_s$ under the projection map $P_{s,k} \colon X ^{s+2} \to X^{k+2} $ to the first $k+2$ coordinates, is a minimal component of $\Omega_k$ for the transformation $Id \times T \times \cdots \times T^k$. This shows that $\Omega_k$ itself is not minimal for $Id \times T \times \cdots \times T^k$ and hence its Haar measure $\sigma^{Z_k}$ is not ergodic.


\subsection{Double average for seminorm control} \label{subsec ex double average}

Here we use the same family of examples to prove that the seminorms control is only guaranteed in the double averaging version of \cref{projection-lemma-for-qmri}. More precisely, we build an example where if one consider a single average, the $L^2$ norm is no longer controlled by the seminorms of $f_1,f_2$.  

Consider $X= \T^3$ with the transformation $T\colon \T^3 \to \T^3$ given by $(x,y,z) \mapsto (x + \alpha, y + 2x + \alpha, z+3y+3x+\alpha)$ for $\alpha$ irrational. Let also $\boldsymbol{0} = (0,0,0)$ and
 \begin{align*}
    \Omega &= \overline{Orb_{T\times T^2} (\boldsymbol 0, \boldsymbol0)} = \{(t, s, r, 2t, 4s,9r  ) \mid t, s,r \in \T \} \subset X^2
\end{align*}
with its Haar measure $\xi$. Then, $(\Omega, \xi,T\times T^2)$ is a minimal and ergodic system. Notice that, for $F \in C(\Omega)$ and $(t_0, s_0, r_0, 2t_0, 4s_0 ,9r_0 ) \in \Omega$, the conditional expectation onto $W=Z_2(\Omega, \xi, T\times T^2) $ is given by
\begin{equation*}
    \E( F \mid W) (t_0, s_0, r_0, 2t_0, 4s_0 ,9r_0 ) = \int_{\T} F(t_0, s_0, r, 2t_0, 4s_0 ,9r ) \diff m(r).
\end{equation*}
In particular if $f_1, f_2 \in C(\T^3)$ are given by $f_1=\1_{\T^2 \times (0,1/9^2)} + 1$ and $f_2 = \1_{\T^2 \times (1/9,2/9)} +1$ we have that
\begin{align*}
   \E(f_1\mid Z_2) =& \E( \1_{\T^2 \times (0,1/9^2)} \mid Z_2) + 1 = 1 + 1/9^2 \\
   \E(f_2\mid Z_2) =&  \E( \1_{\T^2 \times (1/9,2/9)} \mid Z_2) + 1 = 1+ 1/9  \\
   \E(f_1 \otimes f_2\mid W) =& \E( \1_{\T^2 \times (0,1/9^2)} \otimes \1_{\T^2 \times (1/9,2/9)} \mid W) +  \E( \1_{\T^2 \times (0,1/9^2)} + \1_{\T^2 \times (1/9,2/9)} \mid Z_2)  + 1 \\
   =& 0 + 1/9^2 + 1/9 +1
\end{align*}
In particular $\E(f_1 \otimes f_2 \mid W) - \E(f_1 \mid Z_2) \otimes \E(f_2 \mid Z_2) \neq 0 $. Moreover, we have that
\begin{equation*}
    \lim_{N \to \infty} \norm{\frac{1}{N} \sum_{n \leq N}(T^n f_1 \cdot T^{2n} f_1) \otimes (T^n f_2 \cdot T^{2n} f_2)  }_{L^2(\xi)}
\end{equation*}
is controlled by 
\begin{equation*}
    \lim_{N \to \infty} \norm{\frac{1}{N} \sum_{n \leq N} \tilde T^{4n} ( f_1\otimes \1) \cdot \tilde T^{2n} ( f_1\otimes f_2) \cdot \tilde T^{n} ( \1 \otimes f_2)  }_{L^2(\xi)},
\end{equation*}
by similar arguments as in the proof of \cref{projection-lemma-for-qmri}, where here $\tilde T = T \times T^2$. By \cite{Host_Kra_nonconventional_averages_nilmanifolds:2005} we can replace it by
\begin{align*}
    \lim_{N \to \infty}& \norm{\frac{1}{N} \sum_{n \leq N} \tilde T^{4n} \E( f_1\otimes \1 \mid W) \cdot \tilde T^{2n} \E( f_1\otimes f_2 \mid W) \cdot \tilde T^{n} \E( \1 \otimes f_2 \mid W)  }_{L^2(\xi)} \\=& (1/9^3  +1)(1/9^3 + 1/9 +1) (1/9 +1)
\end{align*}
On the other hand, 
\begin{align*}
    \lim_{N \to \infty}& \norm{\frac{1}{N} \sum_{n \leq N} \tilde T^{4n} (\E( f_1\mid Z_2)\otimes \1) \cdot \tilde T^{2n} (\E( f_1 \mid Z_2) \otimes \E(f_2 \mid Z_2)) \cdot \tilde T^{n} ( \1 \otimes \E(f_2 \mid Z_2))  }_{L^2(\xi)} \\=& (1/9^3  +1)^2 (1/9 +1)^2.
\end{align*}

\small{
\bibliographystyle{abbrv}
\bibliography{refs}

\begin{thebibliography}{10}

\bibitem{ackelsberg2025polynomial_patterns_rationals}
E.~Ackelsberg.
\newblock Infinite polynomial patterns in large subsets of the rational numbers.
\newblock {\em arXiv preprint arXiv:2506.19667}, 2025.

\bibitem{ackelsberg_jamnesham2025equidistribution}
E.~Ackelsberg and A.~Jamneshan.
\newblock Equidistribution in 2-nilpotent polish groups and triple restricted sumsets.
\newblock {\em arXiv preprint arXiv:2504.07865}, 2025.

\bibitem{charamaras_kousek_mountakis_radic2025BBingroups}
D.~Charamaras, I.~Kousek, A.~Mountakis, and T.~Radi{\'c}.
\newblock Infinite unrestricted sumsets in subsets of abelian groups with large density.
\newblock {\em arXiv preprint arXiv:2504.08649}, 2025.

\bibitem{Charamaras_Mountakis_2025}
D.~Charamaras and A.~Mountakis.
\newblock Finding product sets in some classes of amenable groups.
\newblock {\em Forum of Mathematics, Sigma}, 13, 2025.

\bibitem{diNasso_Golbring_Jin_Leth_Lupini_Mahlburg2015sumset}
M.~Di~Nasso, I.~Goldbring, R.~Jin, S.~Leth, M.~Lupini, and K.~Mahlburg.
\newblock On a sumset conjecture of erd{\H{o}}s.
\newblock {\em Canadian Journal of Mathematics}, 67(4):795--809, 2015.

\bibitem{Erdos77}
P.~Erd{\H{o}}s.
\newblock Problems and results on combinatorial number theory. {III}.
\newblock pages 43--72. Lecture Notes in Math., Vol. 626, 1977.

\bibitem{erdHos1980survey}
P.~Erd{\H{o}}s.
\newblock A survey of problems in combinatorial number theory.
\newblock {\em Annals of Discrete Mathematics}, 6:89--115, 1980.

\bibitem{erdos2006problems}
P.~Erd{\"o}s.
\newblock Problems and results on combinatorial number theory iii.
\newblock In {\em Number Theory Day: Proceedings of the Conference Held at Rockefeller University, New York 1976}, pages 43--72. Springer, 2006.

\bibitem{Furstenberg77}
H.~Furstenberg.
\newblock Ergodic behavior of diagonal measures and a theorem of {S}zemer\'edi on arithmetic progressions.
\newblock {\em J. d'Analyse Math.}, 31:204--256, 1977.

\bibitem{Furstenberg81}
H.~Furstenberg.
\newblock {\em Recurrence in ergodic theory and combinatorial number theory}.
\newblock Princeton University Press, Princeton, N.J., 1981.

\bibitem{Furstenberg_Katznelson78}
H.~Furstenberg and Y.~Katznelson.
\newblock An ergodic {S}zemer\'edi theorem for commuting transformations.
\newblock {\em J. Analyse Math.}, 34:275--291 (1979), 1978.

\bibitem{hernandez2025infinite}
F.~Hern{\'a}ndez.
\newblock Infinite linear patterns in sets of positive density.
\newblock {\em arXiv preprint arXiv:2505.15458}, 2025.

\bibitem{Hindman_ultrafilters}
N.~Hindman.
\newblock Ultrafilters and combinatorial number theory.
\newblock {\em Number theory, Carbondale 1979}, (Proc. Southern Illinois Conf., Southern Illinois Univ., Carbondale, Ill.,1979) Lecture Notes in Math., vol. 751, Springer, Berlin, 1979, pp. 119–184.

\bibitem{Hindman74}
N.~Hindman.
\newblock Finite sums from sequences within cells of a partition of {$N$}.
\newblock {\em J. Combinatorial Theory Ser. A}, 17:1--11, 1974.

\bibitem{host2019short}
B.~Host.
\newblock A short proof of a conjecture of erd$\backslash$" os proved by moreira, richter and robertson.
\newblock {\em Discrete Analysis}, page~19, 2019.

\bibitem{Host_Kra_nonconventional_averages_nilmanifolds:2005}
B.~Host and B.~Kra.
\newblock Nonconventional ergodic averages and nilmanifolds.
\newblock {\em Ann. of Math. (2)}, 161(1):397--488, 2005.

\bibitem{Host_Kra_nilpotent_structures_ergodic_theory:2018}
B.~Host and B.~Kra.
\newblock {\em Nilpotent structures in ergodic theory}, volume 236 of {\em Mathematical Surveys and Monographs}.
\newblock American Mathematical Society, Providence, RI, 2018.

\bibitem{kousek2025asymmetric}
I.~Kousek.
\newblock Asymmetric infinite sumsets in large sets of integers.
\newblock {\em arXiv preprint arXiv:2502.03112}, 2025.

\bibitem{kousek_radic2025BB}
I.~Kousek and T.~Radi{\'c}.
\newblock Infinite unrestricted sumsets of the form b+ b b+b in sets with large density.
\newblock {\em Bulletin of the London Mathematical Society}, 57(1):48--68, 2025.

\bibitem{Kra_Moreira_Richter_Robertson:2023}
B.~Kra, J.~Moreira, F.~Richter, and D.~Robertson.
\newblock A proof of erd{\H o}s{\textquoteright}s b + b + t conjecture.
\newblock {\em Communications of the American Mathematical Society}, 4:480--494, 2024.

\bibitem{Kra_Moreira_Richter_Robertson_problems}
B.~Kra, J.~Moreira, F.~Richter, and D.~Robertson.
\newblock Problems on infinite sumset configurations in the integers and beyond.
\newblock {\em Bulletin of the American Mathematical Society}, 62(4):537--574, 2025.

\bibitem{Kra_Moreira_Richter_Robertson:2022}
B.~Kra, J.~Moreira, F.~K. Richter, and D.~Robertson.
\newblock Infinite sumsets in sets with positive density.
\newblock {\em Journal of the American Mathematical Society}, 36(4), 2023.

\bibitem{kmrr25}
B.~Kra, J.~Moreira, F.~K. Richter, and D.~Robertson.
\newblock The density finite sums theorem.
\newblock {\em Inventiones mathematicae}, pages 1--31, 2025.

\bibitem{Moreira_Richter_Robertson19}
J.~Moreira, F.~Richter, and D.~Robertson.
\newblock A proof of a sumset conjecture of {E}rd{\H o}s.
\newblock {\em Ann. of Math. (2)}, 189(2):605--652, 2019.

\bibitem{Zorin-Kranich16}
P.~Zorin-Kranich.
\newblock Norm convergence of multiple ergodic averages on amenable groups.
\newblock http://arxiv.org/abs/1111.7292v8.

\end{thebibliography}

}
\bigskip
\noindent
Felipe Hernández\\
\textsc{{\'E}cole Polytechnique F{\'e}d{\'e}rale de Lausanne} (EPFL)\par\nopagebreak
\noindent
\href{mailto:felipe.hernandezcastro@epfl.ch}
{\texttt{felipe.hernandezcastro@epfl.ch}}

\bigskip
\noindent
Ioannis Kousek\\
\textsc{University of Warwick} \par\nopagebreak
\noindent
\href{mailto: ioannis.kousek@warwick.ac.uk}
{\texttt{ioannis.kousek@warwick.ac.uk}}

\bigskip
\noindent
Tristán Radić\\
\textsc{Northwestern University} \par\nopagebreak
\noindent
\href{mailto:tristan.radic@u.northwestern.edu}
{\texttt{tristan.radic@u.northwestern.edu}}

\end{document}